\spnewtheorem{assumption}{Assumption}{\bf}{\it}
\DeclareMathOperator{\interior}{int}
\DeclareMathOperator{\dom}{dom}
\DeclareMathOperator{\dist}{dist}
\DeclareMathOperator{\real}{\mathbb{R}}
\DeclareMathOperator*{\argmax}{argmax}
\DeclareMathOperator*{\argmin}{argmin}
\newcommand{\ie}{\textit{i.e.}}
\newcommand{\etal}{\textit{et al.}}
\algnewcommand{\Input}[1]{
\State\textbf{Input:}\hspace*{0.5em}\parbox[t]{.9\linewidth}{\raggedright #1}
}
\algnewcommand{\Initialization}[1]{
\State\textbf{Initialization:}\hspace*{0.5em}\parbox[t]{.8\linewidth}{\raggedright #1}
}
\begin{document}
    \title{New Bregman proximal type algorithms for solving DC optimization problems}
    \author{Shota~Takahashi \and Mituhiro~Fukuda \and Mirai~Tanaka}
    \institute{
        Shota Takahashi
        \at Department of Statistical Science,
        The Graduate University for Advanced Studies,
        10--3 Midori-cho,
        Tachikawa, Tokyo 190--8562, Japan.\\
        \email{takahashi.shota@ism.ac.jp}\\
        \and Mituhiro Fukuda
        \at Department of Computer Science,
        Institute of Mathematics and Statistics,
        University of S\~ao Paulo,
        Rua do Mat\~ao, 1010, Cidade Universit\'aria, S\~ao Paulo 05508--090, Brazil;
        Department of Mathematical and Computing Science,
        Tokyo Institute of Technology,
        2--12--1 Oh-okayama,
        Meguro-ku, Tokyo 152--8552, Japan;
        currently at S\~ao Paulo State Technological College, Praia
        Grande, Pra\c{c}a 19 de Janeiro, 144, Praia
        Grande, SP, 11700--100, Brazil.
        \email{mituhiro@is.titech.ac.jp}\\
        \and Mirai Tanaka
        \at Department of Statistical Inference and Mathematics,
        The Institute of Statistical Mathematics,
        10--3 Midori-cho,
        Tachikawa, Tokyo 190--8562, Japan;
        Continuous Optimization Team, RIKEN Center for Advanced Intelligence Project,
        Nihonbashi 1--chome Mitsui Building, 15th floor, 1--4--1 Nihonbashi, Chuo-ku, 103--0027 Tokyo, Japan.\\
        \email{mirai@ism.ac.jp}
    }
    
    \date{Received: \today / Accepted: date}

    \maketitle

    \begin{abstract}
        Difference of Convex (DC) optimization problems have objective functions that are differences between two convex functions.
        Representative ways of solving these problems are the proximal DC algorithms, which require that the convex part of the objective function have $L$-smoothness.
        In this article, we propose the Bregman Proximal DC Algorithm (BPDCA)
        for solving large-scale DC optimization problems that do not possess $L$-smoothness.
        Instead, it requires that the convex part of the objective function has
        the $L$-smooth adaptable property that is exploited in Bregman proximal gradient algorithms.
        In addition, we propose an accelerated version, the Bregman Proximal DC Algorithm with extrapolation (BPDCAe), with a new restart scheme.
        We show the global convergence of the iterates generated by BPDCA(e) to a limiting critical point
        under the assumption of the Kurdyka-\L ojasiewicz property or subanalyticity of the objective function
        and other weaker conditions than those of the existing methods.
        We applied our algorithms to phase retrieval,
        which can be described both as a nonconvex optimization problem and as a DC optimization problem.
        Numerical experiments showed that BPDCAe outperformed existing Bregman proximal-type algorithms
        because the DC formulation allows for larger admissible step sizes.
    \end{abstract}

    \keywords{
        Difference-of-convex optimization \and
        Nonconvex optimization \and
        Nonsmooth optimization \and
        Bregman proximal DC algorithms \and
        Bregman distances \and
        Kurdyka-\L ojasiewicz inequality
    }

    \subclass{90C26 \and 90C30 \and 65K05}

    \section{Introduction}\label{sec:introduction}
    We are interested in solving Difference of Convex (DC) optimization problems:
    \begin{align*}
    (\mathcal{P}) \quad \min \left\{ \Psi(x) := f_1(x) - f_2(x) + g(x) \ \middle| \ x \in \overline{C} \right\},
    \end{align*}
    where $f_1$ and $f_2$ are convex functions on $\real^d$,
    and $\overline{C}$ is the closure of $C$ which is a nonempty, convex, and open set.
    Note that the function $f_1 - f_2$ is not always convex.
    Also, $g$ may be nonsmooth, such as the $\ell_1$-norm $\|x\|_1$ in~\cite{bpg,Mukkamala2020,bpge}, or alternatively,
    $f_2$ may be nonsmooth \cite{hoai2018}.
    Some interesting examples of $(\mathcal{P})$ can be found in~\cite{pdcae}.
    Although we will place some assumptions on $C$, it can be regarded as $\real^d$ for simplicity.
    Applications of DC optimization are summarized in~\cite{Horst_1999,lethi18,Tuy_1995}.
    \nocite{Horst_1999}\nocite{lethi18}\nocite{Tuy_1995}

    The DC Algorithm (DCA) (see for instance~\cite{lethi18}) is a well-known iterative method for solving the DC optimization problems $(\mathcal{P})$.
    At each iteration, its computational burden mainly depends on the resolution of the subproblem,
    \begin{align}
        \label{subprob:dca}
        x^{k+1} = \argmin_{x \in \overline{C}} \left\{ f_1(x) - \langle \xi^k, x - x^k \rangle + g(x) \right\},
    \end{align}
    where $\xi^k \in \partial_{\mathrm{c}} f_2(x^k) := \{\xi\in\real^d\ |\ f(y) - f(x^k) - \langle \xi, y - x^k \rangle \geq 0, \forall y \in\real^d\}$ is a (classical) subgradient of $f_2$ at $x^k\in\overline{C}$.
    Solving subproblem~\eqref{subprob:dca} may be computationally demanding unless $f_1$ and $g$ have simple structure or $(\mathcal{P})$ is small-scale.
    When $g$ is convex, the proximal DC Algorithm (pDCA) (see for instance~\cite{pdcae}) is an alternative method of solving large-scale DC optimization problems.
    However, to guarantee global convergence of its iterates to a critical point, $f_1$ needs to be $L$-smooth; \ie, its gradient needs to be globally Lipschitz continuous.
    Each step of pDCA is given by
    \begin{align}
        \label{subprob:pdca}
        &x^{k+1} = \argmin_{x \in \overline{C}} \left\{ g(x) + \langle \nabla f_1(x^k) - \xi^k, x - x^k \rangle + \frac{1}{2\lambda}\|x - x^k\|^2 \right\},
    \end{align}
    where $\xi^k \in \partial_{\mathrm{c}} f_2(x^k)$, $x^k\in\overline{C}$,
    $\lambda > 0$ satisfies $0 < \lambda L < 1$, and $\|\cdot\|$ denotes the Euclidean norm.
    Since $\lambda\ (< 1/L)$ plays the role of a step size,
    finding a larger upper bound $1/L$, \ie, finding a smaller $L$,
    is of fundamental importance to achieving fast convergence.
    Wen \etal~\cite{pdcae} proposed the proximal DC Algorithm with extrapolation (pDCAe)
    to accelerate pDCA with the extrapolation technique, which is used,
    for instance, in the Fast Iterative Shrinkage-Thresholding Algorithm (FISTA) of Beck and Teboulle~\cite{beck09}
    and in the Nesterov\rq{}s extrapolation technique~\cite{nesterov83,Nesterov2018}.

    Bolte \etal~\cite{bpg}, who incorporated the kernel generating distance (function) $h$ and Bregman distances \cite{bregman} into the algorithm framework, came up with the notion of the $L$-smooth adaptable property (see also \cite{Bauschke_2017}).
    This property is less restrictive than $L$-smoothness.
    A variant of the Bregman Proximal Gradient algorithm (BPG) proposed by Mukkamala \etal~\cite{Mukkamala2020} iteratively estimates a small $L$,
    while Zhang \etal~\cite{bpge} proposed the Bregman Proximal Gradient algorithm with extrapolation (BPGe),
    which combines BPG with a line search step for extrapolating parameters.

    In this paper, we propose two new algorithms, namely, the Bregman Proximal Difference of Convex Algorithm (BPDCA) and
    the Bregman Proximal Difference of Convex Algorithm with extrapolation (BPDCAe), which are inspired by pDCA(e) and BPG(e).
    These novel algorithms combine pDCA(e) with the Bregman distances.
    In the subproblem of BPDCA(e), the use of Bregman distances guarantees the accuracy of a linear approximation of $f_1-f_2$.

    The novelty of our contributions can be better understood by comparing them with the existing work.
    As already mentioned, Bregman distances allow us to extend the class of functions to be minimized $f_1$
    from $L$-smooth in pDCAe \cite{pdcae} to the larger class of $L$-smooth adaptable pairs of functions $(f_1,h)$.
    In addition, the function $g$ does not need to be convex in the case of BPDCA.
    By assuming that either $f_2$ or $g$ are differentiable and that their gradients are locally Lipschitz continuous,
    the iterates of BPDCA(e) converge globally to a limiting stationary point (Theorems~\ref{theorem:global-convergence-bpdca} and \ref{theorem:global-convergence-bpdcae}) or a limiting critical point (Theorems~\ref{theorem:global-convergence-bpdca-g} and~\ref{theorem:global-convergence-bpdcae-g}),
    where the definitions of these convergent points are given in Definition~\ref{def:critical-stationary}.
    This means that either $g$ or $f_2$ can be nonsmooth.
    
    Compared with BPG-type algorithms~\cite{bpg,Mukkamala2020,bpge},
    BPDCA(e) well exploits the structure of the objective function.
    When applying these BPG-type algorithms to solve problem $(\mathcal{P})$, we decompose $\Psi$ into two functions.
    There are two naive ways to decompose $\Psi$.
    First, we consider the decomposition $\Psi = f_1 + (g - f_2)$ to apply BPG.
    In this case, BPG solves its subproblem $\min\{g(x) - f_2(x) + \langle\nabla f_1(x^k), x-x^k\rangle+\frac{1}{\lambda}D_h(x, x^k)\}$ at the $k$th iteration,
    where $D_h$ is the Bregman distance associated with a kernel generating distance $h$ (see Definition~\ref{def:kernel}) and $\lambda < 1/L$ is a positive parameter.
    In general, it is difficult to efficiently solve it because $f_2$ often does not have simple structure such as separability.
    This fact is also true even if $g$ is convex and separable, simultaneously.
    With BPDCA, we only need to solve its subproblem $\min\{g(x) + \langle\nabla f_1(x^k) - \xi^k, x-x^k\rangle+\frac{1}{\lambda}D_{h'}(x, x^k)\}$, where $\xi^k\in\partial_{\mathrm{c}}f_2(x^k)$.
    If $g$ is additionally convex, the subproblem becomes convex and hence is often efficiently solved.
    Moreover, if $g$ and $h$ are also separable, the subproblem is reduced to $d$ independent one-dimensional convex optimization problems. %, which is usually efficiently solved.
    Even without separability of $h$, it often has closed-form solution formulae as we mentioned in Section~\ref{sec:applications}.
    As an alternative way of decomposition of $\Psi$, we consider $\Psi = (f_1 - f_2) + g$ to apply BPG.
    In this case, to guarantee global convergence, the $L$-smooth adaptability of $(f_1-f_2,h)$ is required (see Definition~\ref{def:l-smad}).
    Meanwhile, for the global convergence of BPDCA(e), the $L'$-smooth adaptability of $(f_1,h')$ is required.
    Comparing these constants, $L'\leq L$ in general,
    and then, we can expect substantial decrease in the objective function at each iteration (Lemma~\ref{lemma:obj-decrease} and \cite[Lemma 4.1]{bpg}).
    This fact has dramatic consequences in practice, as we found in numerical experiments on phrase retrieval (Subsection~\ref{subsec:application-to-quadratic-inverse-problems}).

    The convergence of our algorithms and the
    monotonicity of the objective function are based on standard assumptions.
    Our new restart scheme (Subsection~\ref{subsec:bregman-proximal-dc-algorithm-with-extrapolation}) plays an important role
    in guaranteeing the non-increasing property of the objective functions of BPDCAe without the need for a line search, as in~\cite{bpge}.
    We show global convergence under local Lipschitz continuity of the gradients and
    the Kurdyka-\L ojasiewicz property or subanalyticity of the objective function.
    Additionally, we evaluated the rates of convergence of BPDCA(e).

    To evaluate the performance of BPDCA(e),
    we applied them to phase retrieval,
    a well-known application in nonconvex optimization.
    Phase retrieval arises in many fields of science and engineering,
    such as X-ray crystallography and image processing~\cite{wirtinger,pr-optical}.\nocite{pr-optical}
    It can be formulated as a nonconvex optimization problem or DC optimization problem $(\mathcal{P})$, such as in~\cite{Huang2018}.
    It cannot be solved via pDCA or proximal algorithms, since the function we want to minimize is not $L$-smooth.
    When we formulated phase retrieval as a DC optimization problem,
    we obtained much smaller $L$-smooth adaptable parameters than the existing ones~\cite[Lemma 5.1]{bpg}, \cite{Mukkamala2020}.
    Thus, our algorithms outperformed BPG(e) in our numerical experiment.
    Further experiments showed that, under the Gaussian model, BPDCAe had a higher success rate of
    phrase retrieval than that of Wirtinger flow \cite{wirtinger}.
    Although the kernel generating distance $h$ we utilized does not satisfy Assumption~\ref{ass4} (i),
    the sequences generated by BPDCA(e) converged in the numerical experiments.
    Therefore, we conjecture that all convergence analyses can be carried out with weaker conditions.

    This paper is organized as follows.
    Section~\ref{sec:preliminaries} summarizes notions such as the limiting subdifferential, the Bregman distances, the $L$-smooth adaptable property, the Kurdyka-\L ojasiewicz property, and the subanalytic functions.
    Section~\ref{sec:algorithms-with-bregman-distance} introduces our Bregman proximal-type algorithms under the assumption that $(f_1,h)$ has an $L$-smooth adaptable property.
    Section~\ref{sec:convergence-analysis} (and Appendix~\ref{sec:appendix}) establishes the global convergence of BPDCA(e) to a limiting stationary point or a limiting critical point of the problem $(\mathcal{P})$ and analyzes its rate of convergence.
    Section~\ref{sec:applications} derives small values for the constant $L$ and compares the performance of our algorithms with that of BPG(e).
    Section~\ref{sec:conclusions} summarizes our contributions and discusses future work.

    \section{Preliminaries}\label{sec:preliminaries}
    Here, we review the important notions we will need in the subsequent sections.
    \subsection{Subdifferentials}\label{subsec:subdifferential}
    \begin{definition}[Limiting Subdifferential~\cite{varAna}]
        \label{def:limiting-subdiff}
        For a proper and lower semicontinuous function $f:\real^d\to (-\infty, +\infty]$,
        the limiting subdifferential~\cite{varAna} of $f$ at $x\in\dom f$ is defined by
        \begin{align*}
            \partial f(x) = \left\{\xi\in\real^d \ \middle|\ \exists x^k \xrightarrow{f} x,\xi^k\to\xi\
            \mathrm{such\ that}\ \liminf_{y\to x^k, y\neq x^k}\frac{f(y) - f(x^k) - \langle \xi^k, y - x^k \rangle}{\|y - x^k\|}\geq 0\right\},
        \end{align*}
        where $x^k \xrightarrow{f} x$ means $x^k \to x$ and $f(x^k)\to f(x)$.
    \end{definition}
    Note that when $f$ is convex, the limiting subdifferential coincides with the (classical) subdifferential~\cite[Proposition 8.12]{varAna}, that is,
    $\partial f(x) = \partial_{\mathrm{c}} f(x)$ for all $x\in\real^d$.

    \subsection{Bregman Distances}\label{subsec:proximity-measures}
    First, we define kernel generating distances and Bregman distances.
    \begin{definition}[Kernel Generating Distances~\cite{bpg} and Bregman Distances~\cite{bregman}]
        \label{def:kernel}
        Let $C$ be a nonempty, convex, and open subset of $\real^d$.
        Associated with $C$, a function $h:\real^d\to(-\infty, +\infty]$ is called a kernel generating distance if it meets the following conditions:
        \begin{enumerate}
            \renewcommand{\labelenumi}{\rm{(\roman{enumi})}}
            \item $h$ is proper, lower semicontinuous, and convex, with $\dom h \subset \overline{C}$ and $\dom \partial h = C$.
            \item $h$ is $\mathcal{C}^1$ on $\interior\dom h = C$.
        \end{enumerate}
        We denote the class of kernel generating distances by $\mathcal{G}(C)$.
        Given $h \in \mathcal{G}(C)$, the Bregman distance $D_h: \dom h \times \interior\dom h \to \real_+$ is defined by
        \begin{align*}
            D_h(x, y) := h(x) - h(y) - \langle \nabla h(y), x - y \rangle.
        \end{align*}
    \end{definition}

    From the gradient inequality, $h$ is convex if and only if
    $D_h(x, y) \geq 0$ for any $x \in \dom h $ and $y \in \interior\dom h$.
    When $h$ is a strictly convex function, the equality holds if and only if $x = y$.
    When $h = \frac{1}{2} \|\cdot\|^2$, $D_h(x, y) = \frac{1}{2} \|x - y\|^2$,
    which is the squared Euclidean distance.

    In addition, the Bregman distances satisfy the three-point identity~\cite{bpg},
    \begin{align}
        \label{eq:three-point}
        D_h(x, z) - D_h(x, y) - D_h(y, z) = \langle \nabla h(y) - \nabla h(z), x - y \rangle,
    \end{align}
    for any $y, z \in \interior\dom h$, and $x \in \dom h$.

    \subsection{Smooth Adaptable Functions}\label{subsec:smooth-adaptable-function}
    Now let us define the notion of $L$-smooth adaptable.
    \begin{definition}[$L$-smooth adaptable~\cite{bpg}]
        \label{def:l-smad}
        Consider a pair of functions $(f, h)$ satisfying the following conditions:
        \begin{enumerate}
            \renewcommand{\labelenumi}{\rm{(\roman{enumi})}}
            \item $h \in \mathcal{G}(C)$,
            \item $f: \real^d \to (-\infty, +\infty]$ is a proper and lower semicontinuous function with
            $\dom h \subset \dom f$, which is $\mathcal{C}^1$ on $C = \interior\dom h$.
        \end{enumerate}
        The pair $(f, h)$ is called $L$-smooth adaptable ($L$-smad) on $C$ if there exists $L > 0$ such that $Lh - f$ and $Lh + f$ are convex on $C$.
    \end{definition}

    Since our focus is on DC optimization,
    the function $f_1$ in $(\mathcal{P})$ is always convex.
    Thus, it will be enough to verify that $Lh-f_1$ is convex on $C$ to have $(f_1,h)$ $L$-smad on $C$.

    From the $L$-smooth adaptable property comes the Descent Lemma~\cite{bpg}.
    \begin{lemma}[Full Extended Descent Lemma~\cite{bpg}]\label{lemma:descent-lemma}
    A pair of functions $(f, h)$ is $L$-smad on $C=\interior\dom h$ if and only if:
    \begin{align*}
        |f(x) - f(y) - \langle \nabla f(y), x - y \rangle | \leq L D_h(x, y),\quad \forall x, y \in \interior\dom h.
    \end{align*}
    \end{lemma}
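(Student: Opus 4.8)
The plan is to reduce the claimed two-sided inequality to the first-order (gradient inequality) characterization of convexity, applied to the two functions $Lh-f$ and $Lh+f$ that appear in Definition~\ref{def:l-smad}. The central observation is that the expression $f(x)-f(y)-\langle\nabla f(y),x-y\rangle$ is exactly a Bregman-type divergence associated with $f$, and that this divergence is linear in its defining function, so it interacts cleanly with the combinations $Lh\pm f$.

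First I would record the regularity needed. Since $h$ is $\mathcal{C}^1$ on $C=\interior\dom h$ (Definition~\ref{def:kernel}) and $f$ is $\mathcal{C}^1$ on $C$ (Definition~\ref{def:l-smad}), both $Lh-f$ and $Lh+f$ are $\mathcal{C}^1$ on the open convex set $C$. For such functions the standard first-order characterization applies: a $\mathcal{C}^1$ function $\phi$ is convex on $C$ if and only if $\phi(x)\geq\phi(y)+\langle\nabla\phi(y),x-y\rangle$ for all $x,y\in C$. I would then introduce the shorthand $D_f(x,y):=f(x)-f(y)-\langle\nabla f(y),x-y\rangle$, so that this gradient inequality for $\phi$ reads simply $D_\phi(x,y)\geq 0$.

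The key algebraic step is the linearity of $D_{(\cdot)}$ in its defining function. Because $\nabla(Lh\pm f)=L\nabla h\pm\nabla f$, a direct computation gives
\begin{align*}
    D_{Lh-f}(x,y) = L\,D_h(x,y) - D_f(x,y), \qquad D_{Lh+f}(x,y) = L\,D_h(x,y) + D_f(x,y),
\end{align*}
for all $x,y\in C$, where $D_h$ is the Bregman distance of Definition~\ref{def:kernel}. With these identities in hand both directions follow at once. If $(f,h)$ is $L$-smad, then $Lh-f$ and $Lh+f$ are convex on $C$, so $D_{Lh-f}(x,y)\geq 0$ and $D_{Lh+f}(x,y)\geq 0$; substituting the identities yields $-L\,D_h(x,y)\leq D_f(x,y)\leq L\,D_h(x,y)$, that is, $|D_f(x,y)|\leq L\,D_h(x,y)$. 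Conversely, if this inequality holds for all $x,y\in C$, then $L\,D_h-D_f\geq 0$ and $L\,D_h+D_f\geq 0$, i.e.\ $D_{Lh-f}(x,y)\geq 0$ and $D_{Lh+f}(x,y)\geq 0$; by the \emph{if} part of the first-order characterization, $Lh-f$ and $Lh+f$ are convex on $C$, so $(f,h)$ is $L$-smad.

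I do not anticipate a serious obstacle here; once the divergence identities are written down, the argument is essentially bookkeeping. The only point needing minor care is to justify the first-order characterization of convexity on the open convex set $C$ (rather than on all of $\real^d$) and to confirm that the convexity asserted in Definition~\ref{def:l-smad} is precisely convexity of the restriction to $C$, so that the characterization applies verbatim in both directions of the equivalence.
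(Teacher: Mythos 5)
Your proposal is correct and coincides with the standard argument for this result: the paper states the lemma without proof, citing~\cite{bpg}, where it is proved in exactly your way, namely by applying the first-order gradient-inequality characterization of convexity on the open convex set $C$ to the two functions $Lh-f$ and $Lh+f$ and using the linearity of the divergence $D_f(x,y)=f(x)-f(y)-\langle\nabla f(y),x-y\rangle$ to split the two-sided bound into the two nonnegativity conditions $D_{Lh-f}\geq 0$ and $D_{Lh+f}\geq 0$. There is no gap; the one point you flag for care (that Definition~\ref{def:l-smad} asserts convexity on $C$, so the characterization applies verbatim in both directions) is handled correctly in your write-up.
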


    \subsection{Kurdyka-\L ojasiewicz Property and Subanalytic Functions}\label{subsec:kurdyka-lojasiewicz-property}
    Given $\eta>0$, let $\Xi_{\eta}$ denote the set of all continuous concave functions $\phi:[0, \eta) \to \real_+$
    that are $\mathcal{C}^1$ on $(0, \eta)$ with positive derivatives and which satisfy $\phi(0) = 0$.
    Here, we introduce the Kurdyka-\L ojasiewicz property~\cite{palm,kl}, which we need when analyzing our algorithms:
    \begin{definition}[Kurdyka-\L ojasiewicz property]\label{def:kl}
    Let $f: \real^d \to (-\infty, +\infty]$ be a proper and lower semicontinuous function.
    \begin{enumerate}
        \renewcommand{\labelenumi}{\rm{(\roman{enumi})}}
        \item $f$ is said to have the Kurdyka-\L ojasiewicz (KL) property at
        $\hat{x} \in \dom \partial f$
        if there exist $\eta \in (0, +\infty]$, a neighborhood $U$ of $\hat{x}$,
        and a function $\phi\in\Xi_{\eta}$ such that the following inequality holds:
        \begin{align}
            \label{ineq:kl}
            \phi'(f(x) - f(\hat{x})) \cdot \dist(0, \partial f(x)) \geq 1, \quad \forall x\in U \cap
            \{ x\in\real^d \mid f(\hat{x})<f(x)<f(\hat{x})+\eta\}.
        \end{align}
        \item If $f$ has the KL property at each point of $\dom\partial f$, then it is called a KL function.
    \end{enumerate}
    \end{definition}

    \begin{lemma}[Uniformized KL property~\cite{palm}]\label{lemma:uniformized-kl}
    Suppose that $f:\real^d\to(-\infty, +\infty]$ is a proper and lower semicontinuous function and let $\Gamma$ be a compact set.
    If $f$ is constant on $\Gamma$ and has the KL property at each point of $\Gamma$,
    then there exist positive scalars $\epsilon, \eta > 0$, and $\phi\in\Xi_{\eta}$ such that
    \begin{align*}
        \phi'(f(x) - f(\hat{x})) \cdot \dist(0, \partial f(x)) \geq 1,
    \end{align*}
    for any $\hat{x}\in\Gamma$ and any $x$ satisfying $\dist(x, \Gamma) < \epsilon$ and $f(\hat{x}) < f(x) < f(\hat{x}) + \eta$.
    \end{lemma}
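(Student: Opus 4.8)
The plan is to combine the pointwise KL estimates into a single uniform one by a compactness argument, the only genuinely delicate ingredient being the construction of one desingularizing function $\phi\in\Xi_\eta$ that simultaneously dominates all the local ones. First I would exploit the hypothesis that $f$ is constant on $\Gamma$: writing $f\equiv c$ on $\Gamma$, the quantity $f(x)-f(\hat x)$ equals $f(x)-c$ for every $\hat x\in\Gamma$, so the claimed inequality no longer depends on the choice of $\hat x$, and it suffices to produce $\epsilon,\eta,\phi$ handling all $x$ with $\dist(x,\Gamma)<\epsilon$ and $c<f(x)<c+\eta$.

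For each $u\in\Gamma$, the KL property (Definition~\ref{def:kl}) supplies $\eta_u>0$, an open ball $B(u,r_u)$ contained in the neighborhood $U_u$, and $\phi_u\in\Xi_{\eta_u}$ such that
\[
\phi_u'(f(x)-c)\cdot\dist(0,\partial f(x))\geq 1
\]
for all $x\in B(u,r_u)$ with $c<f(x)<c+\eta_u$. Since $\{B(u,r_u/2)\}_{u\in\Gamma}$ covers the compact set $\Gamma$, I would extract a finite subcover centred at $u_1,\dots,u_p\in\Gamma$ with radii $r_1,\dots,r_p$ and local data $\eta_i,\phi_i$. Put $\epsilon:=\tfrac12\min_{1\leq i\leq p} r_i>0$ and $\eta:=\min_{1\leq i\leq p}\eta_i>0$. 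If $\dist(x,\Gamma)<\epsilon$, then by compactness of $\Gamma$ I may pick $\gamma\in\Gamma$ with $\|x-\gamma\|<\epsilon$ and an index $i$ with $\gamma\in B(u_i,r_i/2)$, whence the triangle inequality gives $x\in B(u_i,r_i)$; moreover $c<f(x)<c+\eta\leq c+\eta_i$, so the $i$th local KL inequality applies at $x$.

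The crux is then to manufacture a single $\phi\in\Xi_\eta$ with $\phi'\geq\phi_i'$ on $(0,\eta)$ for every $i$. I would define
\[
\phi(s):=\int_0^s \max_{1\leq i\leq p}\phi_i'(t)\,dt, \qquad s\in[0,\eta).
\]
The integrand is a maximum of finitely many continuous positive functions, hence continuous and positive, so $\phi$ is $\mathcal{C}^1$ on $(0,\eta)$ with $\phi'=\max_i\phi_i'>0$ and $\phi(0)=0$; because each $\phi_i$ is concave, its derivative $\phi_i'$ is non-increasing, and the pointwise maximum of non-increasing functions is non-increasing, so $\phi'$ is non-increasing and $\phi$ is concave. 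Thus $\phi\in\Xi_\eta$. Finally, for any qualifying $x$ I pick the index $i$ with $x\in B(u_i,r_i)$, invoke the $i$th local inequality, and use $\phi'(f(x)-c)\geq\phi_i'(f(x)-c)$ to conclude $\phi'(f(x)-c)\cdot\dist(0,\partial f(x))\geq 1$, as required.

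The main obstacle I anticipate is the $\phi$-construction rather than the covering: one must patch together finitely many desingularizing functions while preserving all four defining properties of $\Xi_\eta$ — continuity up to $0$ with $\phi(0)=0$, the $\mathcal{C}^1$ regularity on $(0,\eta)$, strict positivity of the derivative, and concavity — and the integrated-maximum device is precisely what reconciles these demands. A minor point requiring care is the uniform radius $\epsilon$: I must ensure every point of the $\epsilon$-neighborhood genuinely falls into one of the finitely many balls on which a local estimate is valid, which is why I halve the radii before invoking compactness of $\Gamma$.
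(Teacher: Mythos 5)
Your proof is correct, but note that the paper itself offers no proof of this lemma: it is quoted verbatim from~\cite{palm}, so the relevant comparison is with the standard proof given there (Lemma~6 of Bolte--Sabach--Teboulle). That proof follows exactly your compactness-and-covering skeleton — reduce via $f\equiv c$ on $\Gamma$, take a finite subcover, set $\eta=\min_i\eta_i$ and a uniform $\epsilon$ — but patches the finitely many desingularizing functions by the simpler device $\phi(s):=\sum_{i=1}^p\phi_i(s)$, which lies in $\Xi_{\eta}$ immediately (a finite sum of continuous, concave, $\mathcal{C}^1$ functions vanishing at $0$ with positive derivatives) and satisfies $\phi'=\sum_i\phi_i'\geq\phi_j'$ for every $j$, so the local inequalities transfer at once. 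Your integrated-maximum construction $\phi(s)=\int_0^s\max_i\phi_i'(t)\,dt$ accomplishes the same thing and even yields the smallest admissible $\phi$ with $\phi'\geq\max_i\phi_i'$, but it is genuinely more delicate at one point that your writeup glosses over: the derivatives $\phi_i'$ are typically unbounded near $0$ (e.g.\ $\phi_i(s)=cs^{1-\theta}$ gives $\phi_i'(s)=c(1-\theta)s^{-\theta}$), so you must verify that the improper integral converges and that $\phi$ is continuous at $0$ with $\phi(0)=0$, as required for membership in $\Xi_{\eta}$. This is a one-line fix — $\max_i\phi_i'\leq\sum_i\phi_i'$ and $\int_0^s\sum_i\phi_i'(t)\,dt=\sum_i\phi_i(s)<\infty$, which moreover tends to $0$ as $s\to0^+$ — but it should be stated, since ``continuous integrand on $(0,\eta)$'' alone does not justify the definition. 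Everything else is sound: the observation that constancy of $f$ on $\Gamma$ removes the dependence on $\hat{x}$, the half-radius covering with the triangle inequality placing $x\in B(u_i,r_i)$, and concavity of $\phi$ via monotonicity of a finite maximum of non-increasing derivatives. (One cosmetic remark: to extract $\gamma\in\Gamma$ with $\|x-\gamma\|<\epsilon$ from $\dist(x,\Gamma)<\epsilon$ you need only the definition of the infimum, not compactness of $\Gamma$; compactness is what you need for the finite subcover.)
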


    Next, we describe subanalytic functions.
    \begin{definition}[Subanalyticity~\cite{Bolte2007}]
        \label{def:subanalytic}
        \begin{enumerate}
            \renewcommand{\labelenumi}{\rm{(\roman{enumi})}}
            \item A subset $A$ of $\real^d$ is called semianalytic
            if each point of $\real^d$ admits a neighborhood $V$ for which $A \cap V$ assumes the following form:
            \begin{align*}
                \bigcup_{i=1}^p\bigcap_{j=1}^q\left\{ x \in V\ \middle|\ f_{ij}(x) = 0, g_{ij}(x) > 0 \right\},
            \end{align*}
            where the functions $f_{ij}, g_{ij}:V\to\real$ are real-analytic for all $1 \leq i \leq p,1 \leq j \leq q$.
            \item The set $A$ is called subanalytic if each point of $\real^d$ admits a neighborhood $V$ such that
            \begin{align*}
                A \cap V = \left\{ x\in\real^d\ \middle|\ (x, y) \in B \right\},
            \end{align*}
            where $B$ is a bounded semianalytic subset of $\real^d \times \real^m$ for some $m \geq 1$.
            \item A function $f:\real^d\to(-\infty, +\infty]$
            is called subanalytic if its graph is a subanalytic subset of $\real^d\times\real$.
        \end{enumerate}
    \end{definition}
    For instance, given a subanalytic set $S$,
    $\dist(x, S)$ is subanalytic, and every analytic function is subanalytic.
    Note that subanalytic functions are KL functions.
    See~\cite{Bierstone1988,Bolte2007} for further properties of subanalyticity.\nocite{Bierstone1988}

    \section{Proposed Methods: Bregman Proximal DC Algorithms}\label{sec:algorithms-with-bregman-distance}
    We place the following assumptions on the DC optimization problem $(\mathcal{P})$.
    Recall that $C=\interior\dom h$.

    \begin{assumption}
        \label{ass1}
        \quad
        \begin{enumerate}
            \renewcommand{\labelenumi}{\rm{(\roman{enumi})}}
            \item $h \in \mathcal{G}(C)$ with $\overline{C} = \overline{\dom h}$.
            \item $f_1:\real^d \to (-\infty, +\infty]$ is proper and convex
            with $\dom h \subset \dom (f_1 + g)$, which is $\mathcal{C}^1$ on $C$.
            \item $f_2:\real^d \to (-\infty, +\infty]$ is proper and convex.
            \item $g:\real^d \to (-\infty, +\infty]$ is proper and lower semicontinuous
            with $\dom g \cap C \neq \emptyset$.
            \item $v(\mathcal{P}) := \inf \left\{ \Psi(x) \ \middle| \ x \in \overline{C} \right\} > -\infty$.
            \item For any $\lambda > 0$, $\lambda g + h$ is supercoercieve, that is,
            \begin{align*}
                \lim_{\|u\| \to \infty} \frac{\lambda g(u) + h(u)}{\|u\|} = \infty.
            \end{align*}
        \end{enumerate}
    \end{assumption}
    Let $x\in\dom(f_1 + g)$, then $f_2(x) \leq g(x) + f_1(x) - v(\mathcal{P}) < +\infty$ due to Assumption~\ref{ass1} (v).
    Thus, $x\in\dom f_2$, \ie, $\dom (f_1 + g) \subset \dom f_2$.
    From Assumption~\ref{ass1} (ii),
    we have $C \subset \dom (f_1 + g) \subset \dom f_2$.
    Note that Assumption~\ref{ass1} (iv) holds when $\overline{C}$ is compact~\cite{bpg}.

    \subsection{Bregman Proximal DC Algorithm (BPDCA)}\label{subsec:bregman-proximal-dc-algorithm}
    To obtain the Bregman Proximal DC Algorithm (BPDCA) mapping
    for some $\lambda > 0$, we recast the objective function of $(\mathcal{P})$ via a DC decomposition:
    \begin{align*}
        \Psi(u) = f_1(u) - f_2(u) + g(u)
        =\left( \frac{1}{\lambda} h(u) + g(u) \right) - \left( \frac{1}{\lambda} h(u) - f_1(u) + f_2(u) \right),
    \end{align*}
    and, given $x\in C=\interior\dom h$ and $\xi \in \partial_{\mathrm{c}} f_2(x)$, define the mapping,
    \begin{align*}
        \mathcal{T}_{\lambda}(x) := \argmin_{u \in \overline{C}} \left\{ g(u) + \langle \nabla f_1(x) - \xi, u - x \rangle + \frac{1}{\lambda}D_h(u, x) \right\}.
    \end{align*}
    Additionally, we put the following assumption on $(\mathcal{P})$.
    \begin{assumption}
        \label{ass2}
        For all $x \in C$ and $\lambda > 0$, we have
        \begin{align*}
            \mathcal{T}_{\lambda}(x) \subset C,\quad \forall x \in C.
        \end{align*}
    \end{assumption}
    Note that Assumption~\ref{ass2} is not so restrictive because it holds when $C\equiv\real^d$.
    Under Assumptions~\ref{ass1} and~\ref{ass2}, we have the following lemma~\cite[Lemma 3.1]{bpg}.

    \begin{lemma}
        \label{lemma:mapping}
        Suppose that Assumptions~$\ref{ass1}$ and~$\ref{ass2}$ hold,
        and let $x \in C = \interior\dom h$.
        Then, the set $\mathcal{T}_{\lambda}(x)$ is a nonempty and compact subset of $C$ for any $\lambda>0$.
    \end{lemma}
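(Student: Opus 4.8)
The plan is to minimize the objective explicitly and apply the direct method (a Weierstrass-type argument), using Assumption~\ref{ass1}~(vi) to force boundedness of the sublevel sets.

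First I would expand the Bregman distance inside the definition of $\mathcal{T}_{\lambda}(x)$. Writing $D_h(u,x) = h(u) - h(x) - \langle \nabla h(x), u - x \rangle$ and discarding the terms independent of $u$, minimizing the objective over $u\in\overline{C}$ is equivalent to minimizing
\[
\Phi(u) := g(u) + \tfrac{1}{\lambda} h(u) + \langle p, u \rangle,
\]
where $p := \nabla f_1(x) - \xi - \tfrac{1}{\lambda}\nabla h(x)$ is a \emph{fixed} vector, since $x\in C$ and $\xi\in\partial_{\mathrm{c}} f_2(x)$ are fixed and $f_1,h$ are $\mathcal{C}^1$ on $C$. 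I would also note that the constraint $u\in\overline{C}$ is automatically enforced: because of the $h(u)$ term, $\dom\Phi \subset \dom h \subset \overline{C}$ by Assumption~\ref{ass1}~(i), so the argmin over $\overline{C}$ coincides with the argmin over $\real^d$.

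Second, I would verify that $\Phi$ is proper and lower semicontinuous. Since $g$ and $h$ take values in $(-\infty,+\infty]$, we have $\Phi > -\infty$ everywhere, and since $\dom g \cap C \neq \emptyset$ (Assumption~\ref{ass1}~(iv)) with $C=\interior\dom h\subset\dom h$, the intersection $\dom g\cap\dom h$ is nonempty, so $\Phi$ is finite at some point and hence proper. Lower semicontinuity follows from the lower semicontinuity of $g$ (Assumption~\ref{ass1}~(iv)) and of $h$ (Definition~\ref{def:kernel}), together with the continuity of the linear term; note that convexity of $g$ is \emph{not} needed here.

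The crux is establishing coercivity of $\Phi$ from the supercoercivity hypothesis. Scaling by $\lambda>0$ leaves the argmin unchanged, so I consider $\lambda\Phi(u) = \bigl(\lambda g(u) + h(u)\bigr) + \lambda\langle p, u\rangle$. Dividing by $\|u\|$ and letting $\|u\|\to\infty$, the ratio $(\lambda g(u)+h(u))/\|u\|$ tends to $+\infty$ by Assumption~\ref{ass1}~(vi), while $|\langle p, u\rangle|/\|u\| \leq \|p\|$ stays bounded; hence $\lambda\Phi(u)/\|u\|\to+\infty$, so $\Phi$ is supercoercive and in particular its sublevel sets are bounded. The main technical care lies precisely in this absorption of the linear term $\langle p, u\rangle$ by the supercoercive part $\lambda g + h$. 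With $\Phi$ proper, lower semicontinuous, and coercive, its sublevel sets are closed and bounded, hence compact, and the infimum is attained by the Weierstrass theorem, giving $\mathcal{T}_{\lambda}(x)\neq\emptyset$. The argmin set is itself a sublevel set $\{u : \Phi(u) \leq \min\Phi\}$, which is closed (by lower semicontinuity) and bounded (by coercivity), therefore compact. Finally, the inclusion $\mathcal{T}_{\lambda}(x)\subset C$ is exactly Assumption~\ref{ass2}, completing the proof.
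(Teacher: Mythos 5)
Your proposal is correct and takes essentially the same route as the paper, which does not write out its own proof but defers to \cite[Lemma 3.1]{bpg}: that argument likewise drops the $u$-independent terms to reduce the subproblem to minimizing $g + \frac{1}{\lambda}h + \langle p, \cdot\rangle$, invokes the supercoercivity of $\lambda g + h$ from Assumption~\ref{ass1}~(vi) to absorb the linear term and obtain coercivity, and applies the Weierstrass-type argument for properness, lower semicontinuity, and compactness of the argmin, with Assumption~\ref{ass2} supplying the inclusion $\mathcal{T}_{\lambda}(x)\subset C$. Your observations that $\dom\Phi\subset\dom h\subset\overline{C}$ makes the constraint vacuous and that convexity of $g$ is not needed are both accurate and consistent with the paper's setting.
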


    Note that when $h$ is strictly convex, $\mathcal{T}_{\lambda}(x)$ is a singleton.
    Also, when $g$ and $h$ are separable, this mapping is easily computable
    since $\mathcal{T}_{\lambda}(x)$ can be decomposed into a single-valued optimization problem,
    and often has a closed-form solution.
    For instance, when $h(x)=\frac{1}{2}\|x\|^2$, for $g(x)=\|x\|_1$, ${\mathcal T}_{\lambda}(x)$ becomes the soft-thresholding operator,
    or for $g(x)=\|x\|_0$, the hard-thresholding operator.
    Other well-known examples where this mapping has a closed-form solution are when we use an appropriate $h$ such as Burg entropy~\cite{Bauschke_2017},
    Shannon entropy~\cite{beck17}, or $h(x)=\frac{1}{4}\|x\|^4+\frac{1}{2}\|x\|^2$~\cite{bpg} for the corresponding $g$.
    Note that this $h(x)$ is not separable.
    For further examples, see~\cite[Table 2.1]{Dhillon2008}\nocite{Dhillon2008}.

    The Bregman Proximal DC Algorithm (BPDCA), which we are proposing, is listed as Algorithm~\ref{alg:bpdca}.
    \begin{algorithm}[H]
        \caption{Bregman Proximal DC Algorithm (BPDCA)}
        \label{alg:bpdca}
        \begin{algorithmic}[t]
            \normalsize
            \Input{$h \in \mathcal{G}(C)$ with $C = \interior\dom h$ such that $L$-smad for the pair $(f_1, h)$ holds on $C$.}
            \Initialization{$x^0 \in C$ and $0 < \lambda < 1 / L$.}
            \For{$k = 0, 1, 2, \ldots,$}
                \State Take any $\xi^k \in \partial_{\mathrm{c}} f_2(x^k)$ and compute
                \begin{align}
                    \label{subprob:bpdca}
                    x^{k+1} = \argmin_{x \in \overline{C}} \left\{ g(x) + \langle \nabla f_1(x^k) - \xi^k, x - x^k\rangle + \frac{1}{\lambda}D_h(x, x^k) \right\}.
                \end{align}
            \EndFor
        \end{algorithmic}
    \end{algorithm}
    As a recurrent example, $D_h(x, x^k) = \frac{1}{2}\|x - x^k\|^2$ when $h(x) = \frac{1}{2}\|x\|^2$.
    In this case, if $L$ is regarded as the Lipschitz constant for the gradient of $f_1$,
    subproblem~\eqref{subprob:bpdca} reduces to subproblem~\eqref{subprob:pdca}.
    If $f_2$ is $\mathcal{C}^1$ on $C$ and the pair $(f_1 - f_2, h)$ is $L$-smad,
    BPDCA reduces to BPG~\cite{bpg}.

    \subsection{Bregman Proximal DC Algorithm with Extrapolation (BPDCAe)}\label{subsec:bregman-proximal-dc-algorithm-with-extrapolation}
    Algorithm~\ref{alg:bpdcae}, which we are proposing, is an acceleration of BPDCA that uses the extrapolation technique~\cite{beck09,nesterov83,Nesterov2018} to solve the DC optimization problem $(\mathcal{P})$.
    \begin{algorithm}[H]
        \caption{Bregman Proximal DC Algorithm with Extrapolation (BPDCAe)}
        \label{alg:bpdcae}
        \normalsize
        \begin{algorithmic}[t]
            \Input{$h \in \mathcal{G}(C)$ with $C = \interior\dom h$ such that $L$-smad for the pair $(f_1, h)$ holds on $\real^d$.}
            \Initialization{$x^0 = x^{-1} \in \real^d, \theta_{-1}=\theta_{0}=1$, $\rho\in(0,1]$, and $0 < \lambda < 1 / L$.}
            \For{$k = 0, 1, 2, \ldots,$}
                \State Take any $\xi^k \in \partial_{\mathrm{c}} f_2(x^k)$ and compute
                \begin{align}
                    \beta_k &= \frac{\theta_{k-1} - 1}{\theta_k} \quad \mathrm{with} \quad
                    \theta_{k+1} = \frac{1 + \sqrt{1 + 4\theta_k^2}}{2},\label{update:beta}\\
                    y^k &= x^k + \beta_k(x^k - x^{k-1}).\nonumber
                \end{align}
                \If{$y^k \notin C$ or $D_h(x^k, y^k) > \rho D_h(x^{k-1}, x^k)$}
                    \State Set $\beta_k = 0$ with $\theta_{k-1} = \theta_k = 1$.
                \EndIf
                \State Compute $y^k = x^k + \beta_k(x^k - x^{k-1})$ and
                \begin{align}
                    x^{k+1} = \argmin_{y \in \overline{C}} \left\{ g(y) + \langle \nabla f_1(y^k) - \xi^k, y - y^k\rangle + \frac{1}{\lambda}D_h(y, y^k) \right\}.
                    \label{subprob:bpdcae}
                \end{align}
            \EndFor
        \end{algorithmic}
    \end{algorithm}

    When $\beta_k\equiv0$ for all $k \geq 0$, BPDCAe reduces to BPDCA.
    Here, we prefer the popular choice for the coefficients $\beta_k$ (and $\theta_k$) given in~\cite{pdcae} for acceleration.
    Accordingly, \eqref{update:beta} guarantees that $\{\beta_k\}_{k=0}^{\infty} \subset [0, 1)$ and $\sup_{k\geq0} \beta_k < 1$.
    These properties are needed to prove global subsequential convergence of the iterates (see Theorem~\ref{theorem:global-subsequential-conv-bpdcae} (ii)).
    Algorithm~\ref{alg:bpdcae} introduces a new {\it adaptive restart scheme},
    which resets $\theta_k$ and $\beta_k$ whenever
    \begin{align}
        D_h(x^k, y^k) > \rho D_h(x^{k-1}, x^k), \label{adaptive-restart}
    \end{align}
    is satisfied for a fixed $\rho\in[0,1)$.
    This adaptive restart scheme guarantees the non-increasing property for BPDCAe
    (see Lemma~\ref{lemma:obj-decrease-ex}).
    In addition, we can enforce this resetting every $K$ iterations for a given positive integer $K$.
    In our numerical experiments, we set $\{\beta_k\}_{k=0}^{\infty}$ as both the fixed and the adaptive restart schemes.

    When $C = \interior\dom h = \real^d$, $y^k$ always stays in $C$.
    However, when $C \neq \real^d$ and $x^k+\beta_k(x^k-x^{k-1}) \notin C$, Algorithm~\ref{alg:bpdcae} enforces $\beta_k=0$ and
    BPDCAe is not accelerated at the $k$th iteration.
    This operation guarantees that $y^k$ always stays in $C$.
    In practice, however, the extrapolation technique may be valid and accelerates BPDCAe.

    We define the following BPDCAe mapping for all $x, y \in C =\interior\dom h$, and $\lambda \in (0, 1/L)$:
    \begin{align*}
        \mathcal{S}_{\lambda}(x, y) := \argmin_{u \in \overline{C}} \left\{ g(u) + \langle \nabla f_1(y) - \xi, u - y \rangle + \frac{1}{\lambda}D_h(u, y) \right\},
    \end{align*}
    where $\xi \in \partial_{\mathrm{c}} f_2(x)$.
    Similarly to the case of BPDCA, we make an Assumption~\ref{ass2-ex} and can prove Lemma~\ref{lemma:mapping-ex} for $\mathcal{S}_{\lambda}(x, y) \subset \overline{C}$.

    \begin{assumption}
        \label{ass2-ex}
        For all $x, y \in C$ and $\lambda > 0$, we have
        \begin{align*}
            \mathcal{S}_{\lambda}(x, y) \subset C,\quad \forall x, y \in C.
        \end{align*}
    \end{assumption}
    \begin{lemma}
        \label{lemma:mapping-ex}
        Suppose that Assumptions~$\ref{ass1}$ and~$\ref{ass2-ex}$ hold,
        and let $x, y \in C=\interior\dom h$.
        Then, the set $\mathcal{S}_{\lambda}(x, y)$ is a nonempty and compact subset of $C$ for any $\lambda>0$.
    \end{lemma}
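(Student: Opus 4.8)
The plan is to follow the same line of reasoning as in the proof of Lemma~\ref{lemma:mapping} for the mapping $\mathcal{T}_{\lambda}$, since the objective defining $\mathcal{S}_{\lambda}(x,y)$ has exactly the same structure: a proper lower semicontinuous function, plus a linear term, plus $\frac{1}{\lambda}h$. First I would fix $x, y \in C$ and $\xi \in \partial_{\mathrm{c}} f_2(x)$, and expand the Bregman distance as $D_h(u,y) = h(u) - h(y) - \langle \nabla h(y), u - y\rangle$. Discarding the terms that are constant in $u$, minimizing the objective over $\overline{C}$ is equivalent to minimizing
\[
\Phi(u) := g(u) + \frac{1}{\lambda}h(u) + \langle p, u\rangle, \qquad p := \nabla f_1(y) - \xi - \tfrac{1}{\lambda}\nabla h(y),
\]
where $p$ is a fixed vector because $y$ and $\xi$ are fixed. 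Note that $\nabla f_1(y)$ and $\nabla h(y)$ are well defined since $y \in C = \interior\dom h$ and both $f_1$ and $h$ are $\mathcal{C}^1$ on $C$.

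Next I would verify that $\Phi$ is proper, lower semicontinuous, and coercive, so that the Weierstrass theorem yields a nonempty and compact set of minimizers. Properness follows because $\dom \Phi = \dom g \cap \dom h$ contains a point of $\dom g \cap C$ (nonempty by Assumption~\ref{ass1} (iv)), on which $h$ is finite; lower semicontinuity is inherited from the lower semicontinuity of $g$ and of the convex proper kernel $h$, together with the continuity of the linear term. For coercivity, the key input is the supercoercivity of $\lambda g + h$ in Assumption~\ref{ass1} (vi): dividing by $\|u\|$ gives $\Phi(u)/\|u\| = \frac{1}{\lambda}\cdot\frac{\lambda g(u) + h(u)}{\|u\|} + \langle p, u\rangle/\|u\|$, and since the first term tends to $+\infty$ while the second is bounded in absolute value by $\|p\|$, we obtain $\Phi(u)/\|u\| \to +\infty$, hence $\Phi$ is coercive. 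Because $\dom h \subset \overline{C}$, the effective domain of $\Phi$ already lies in $\overline{C}$, so restricting the minimization to the closed set $\overline{C}$ does not affect the analysis, and the argmin set $\mathcal{S}_{\lambda}(x,y)$ is nonempty and compact.

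Finally, Assumption~\ref{ass2-ex} guarantees $\mathcal{S}_{\lambda}(x,y) \subset C$, and since a compact set contained in $C$ is a compact subset of $C$, the conclusion follows. I do not expect a genuine obstacle here, as the argument is a direct adaptation of the one behind Lemma~\ref{lemma:mapping}; the only point requiring slight care is confirming that the extra linear term $\langle p, u\rangle$ does not destroy coercivity, which is precisely why the supercoercivity (rather than mere coercivity) in Assumption~\ref{ass1} (vi) is needed.
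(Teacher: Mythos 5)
Your proposal is correct and follows essentially the same route as the paper, which does not spell out a proof but defers to the argument of Lemma~\ref{lemma:mapping} (i.e., \cite[Lemma 3.1]{bpg}): reduce the subproblem to minimizing $g + \frac{1}{\lambda}h$ plus a linear term, use the supercoercivity in Assumption~\ref{ass1} (vi) to absorb that linear perturbation, apply the Weierstrass theorem for a nonempty compact argmin inside $\overline{C} = \overline{\dom h}$, and invoke Assumption~\ref{ass2-ex} for the inclusion in $C$. You correctly identify the one delicate point, namely that supercoercivity (not mere coercivity) is what makes the linear term $\langle p, u\rangle$ harmless.
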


    \section{Convergence Analysis}\label{sec:convergence-analysis}
    Throughout this section, we will assume that the pair of functions $(f_1,h)$ is $L$-smad on $C$.

    \subsection{Properties of BPDCA}\label{subsec:properties-of-bpdca}
    First, we show the decreasing property of BPDCA mapping for $0 < \lambda L < 1$ (the argument is adapted from~\cite[Lemma 4.1]{bpg}).
    \begin{lemma}
        \label{lemma:obj-decrease}
        Suppose that Assumptions~$\ref{ass1}$ and~$\ref{ass2}$ hold.
        For any $x \in C=\interior\dom h$ and any $x^+ \in C=\interior\dom h$ defined by
        \begin{align}
            \label{eq:conv-update}
            x^+ \in \argmin_{u \in \overline{C}} \left\{ g(u) + \langle \nabla f_1(x) - \xi, u - x \rangle + \frac{1}{\lambda}D_h(u, x) \right\},
        \end{align}
        where $\xi \in \partial_{\mathrm{c}} f_2(x)$ and $\lambda > 0$, it holds that
        \begin{align}
            \label{ineq:obj-decrease}
            \lambda \Psi(x^+) \leq \lambda \Psi(x) - (1 - \lambda L) D_h(x^+, x).
        \end{align}
        In particular, the sufficiently decreasing property in the objective function value $\Psi$ is ensured when $0 < \lambda L < 1$.
    \end{lemma}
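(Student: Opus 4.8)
The plan is to combine three one-sided estimates that telescope into the claimed inequality. First I would exploit the optimality of $x^+$ in the subproblem~\eqref{eq:conv-update}: since $x \in \overline{C}$ is feasible, comparing the objective value at the minimizer $x^+$ against its value at $u = x$ and using $D_h(x,x)=0$ yields
\begin{align*}
    g(x^+) + \langle \nabla f_1(x) - \xi, x^+ - x \rangle + \frac{1}{\lambda} D_h(x^+, x) \leq g(x).
\end{align*}
This is the estimate that carries the $\tfrac{1}{\lambda}D_h(x^+,x)$ term and bounds $g(x^+)$ from above.

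Next I would estimate the two convex pieces separately. For $f_1$, I would invoke the Full Extended Descent Lemma (Lemma~\ref{lemma:descent-lemma}) in its upper form, using that $(f_1,h)$ is $L$-smad on $C$, to get $f_1(x^+) \leq f_1(x) + \langle \nabla f_1(x), x^+ - x \rangle + L D_h(x^+, x)$. For the concave contribution $-f_2$, convexity of $f_2$ together with $\xi \in \partial_{\mathrm{c}} f_2(x)$ gives the subgradient inequality $f_2(x^+) \geq f_2(x) + \langle \xi, x^+ - x \rangle$, hence $-f_2(x^+) \leq -f_2(x) - \langle \xi, x^+ - x \rangle$. Adding these two estimates to the optimality bound and forming $\Psi(x^+) = f_1(x^+) - f_2(x^+) + g(x^+)$, I expect the inner-product terms to cancel in pairs: the $\pm\langle \nabla f_1(x), x^+ - x \rangle$ contributions and the $\pm\langle \xi, x^+ - x\rangle$ contributions annihilate, leaving
\begin{align*}
    \Psi(x^+) \leq \Psi(x) + \left( L - \frac{1}{\lambda} \right) D_h(x^+, x).
\end{align*}
Multiplying through by $\lambda > 0$ then produces exactly $\lambda\Psi(x^+) \leq \lambda\Psi(x) - (1-\lambda L)D_h(x^+,x)$. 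The final monotonicity claim is immediate, since for $0 < \lambda L < 1$ the coefficient $(1-\lambda L)$ is positive and $D_h(x^+,x) \geq 0$ by convexity of $h$.

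I do not anticipate a serious obstacle, as this is a standard Bregman descent estimate adapted to the DC splitting. The one point requiring care is the applicability of Lemma~\ref{lemma:descent-lemma} and the definition of $D_h(x^+,x)$, both of which are valid only on $C = \interior\dom h$; this is secured by Lemma~\ref{lemma:mapping}, which guarantees $x^+ \in C$ under Assumptions~\ref{ass1} and~\ref{ass2}, so that both $x$ and $x^+$ lie in the region where the descent lemma applies. I would also record that the comparison at $u = x$ in the first step is legitimate precisely because $x \in C \subset \overline{C}$ is feasible for the subproblem.
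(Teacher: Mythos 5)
Your proposal is correct and follows essentially the same route as the paper's proof: the optimality comparison at $u = x$, the upper bound from the Full Extended Descent Lemma for $f_1$, and the subgradient inequality for $f_2$, combined so that the inner-product terms cancel. Your additional remark that $x^+ \in C$ (via Lemma~\ref{lemma:mapping}) legitimizes the use of $D_h(x^+,x)$ and the descent lemma is a sound point of care, already implicit in the lemma's hypothesis that $x^+ \in C$.
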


    \begin{proof}
        From the global optimality of $x^+$
        by taking $u = x \in \interior\dom h$ and $\xi \in \partial_{\mathrm{c}} f_2(x)$, we obtain
        \begin{align*}
            g(x^+) + \langle \nabla f_1(x) - \xi, x^+ - x \rangle + \frac{1}{\lambda}D_h(x^+, x) \leq g(x).
        \end{align*}
        Invoking the full Extended Descent Lemma (Lemma~\ref{lemma:descent-lemma}) for $f_1$,
        the definition of the subgradient for $f_2$, and the above inequality, we have
        \begin{align*}
            f_1(x^+) - f_2(x^+) + g(x^+) &\leq f_1(x) - f_2(x) + \langle \nabla f_1(x) - \xi, x^+ - x \rangle + LD_h(x^+, x) + g(x^+)\\
            &\leq f_1(x) - f_2(x) + LD_h(x^+, x) + g(x) - \frac{1}{\lambda}D_h(x^+, x)\\
            &= f_1(x) - f_2(x) + g(x) - \left( \frac{1}{\lambda} - L \right)D_h(x^+, x),
        \end{align*}
        for $\Psi = f_1 - f_2 + g$.
        The last statement follows with $0 < \lambda L < 1$.
        \qed
    \end{proof}

    Proposition~\ref{prop:bpdca-property} follows immediately from Lemma~\ref{lemma:obj-decrease}, as in~\cite{bpg}.

    \begin{proposition}\label{prop:bpdca-property}
    Suppose that Assumptions~$\ref{ass1}$ and~$\ref{ass2}$ hold.
    Let $\{x^k\}_{k=0}^{\infty}$ be a sequence generated by BPDCA with $0 < \lambda L < 1$.
    Then, the following statements hold:
    \begin{enumerate}
        \renewcommand{\labelenumi}{\rm{(\roman{enumi})}}
        \item The sequence $\{\Psi(x^k)\}_{k=0}^{\infty}$ is non-increasing.
        \item $\sum_{k=1}^{\infty}D_h(x^k, x^{k-1}) < \infty$; hence, the sequence $\{D_h(x^k, x^{k-1})\}_{k=0}^{\infty}$ converges to zero.
        \item $\min_{1\leq k \leq n}D_h(x^k, x^{k-1}) \leq \frac{\lambda}{n}\left( \frac{\Psi(x^0) - \Psi_*}{1 - \lambda L} \right)$,
        where $\Psi_* = v(\mathcal{P}) > -\infty$ \rm{(by Assumption~\ref{ass1} (v))}.
    \end{enumerate}
    \end{proposition}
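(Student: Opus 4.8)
The plan is to apply Lemma~\ref{lemma:obj-decrease} to consecutive iterates of BPDCA and then perform a telescoping argument. For each $k \geq 1$ I would set $x = x^{k-1}$ and $x^+ = x^k$ in~\eqref{ineq:obj-decrease}. This substitution is legitimate because the update~\eqref{subprob:bpdca} gives $x^k \in \mathcal{T}_{\lambda}(x^{k-1})$, which is exactly the form~\eqref{eq:conv-update}, and Lemma~\ref{lemma:mapping} (via Assumption~\ref{ass2}) guarantees $x^k \in C$ so that the hypotheses of Lemma~\ref{lemma:obj-decrease} hold. The resulting one-step estimate is
\begin{align*}
    \lambda \Psi(x^k) \leq \lambda \Psi(x^{k-1}) - (1 - \lambda L) D_h(x^k, x^{k-1}).
\end{align*}

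For part (i), since $0 < \lambda L < 1$ yields $1 - \lambda L > 0$ and $D_h(x^k, x^{k-1}) \geq 0$ by convexity of $h$, the subtracted term is nonnegative, so $\lambda \Psi(x^k) \leq \lambda \Psi(x^{k-1})$; dividing by $\lambda > 0$ gives the non-increasing property.

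For parts (ii) and (iii), I would rearrange the one-step estimate into
\begin{align*}
    (1 - \lambda L) D_h(x^k, x^{k-1}) \leq \lambda \left( \Psi(x^{k-1}) - \Psi(x^k) \right),
\end{align*}
sum over $k = 1, \dots, n$ so that the right-hand side telescopes to $\lambda(\Psi(x^0) - \Psi(x^n))$, and invoke the lower bound $\Psi(x^n) \geq \Psi_* = v(\mathcal{P})$ from Assumption~\ref{ass1} (v) to obtain
\begin{align*}
    (1 - \lambda L) \sum_{k=1}^{n} D_h(x^k, x^{k-1}) \leq \lambda \left( \Psi(x^0) - \Psi_* \right).
\end{align*}
Letting $n \to \infty$ furnishes the finite bound $\sum_{k=1}^{\infty} D_h(x^k, x^{k-1}) \leq \frac{\lambda}{1 - \lambda L}(\Psi(x^0) - \Psi_*)$, which is (ii) and forces $D_h(x^k, x^{k-1}) \to 0$ as the tail of a convergent series. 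Bounding the minimum by the average, $\min_{1 \leq k \leq n} D_h(x^k, x^{k-1}) \leq \frac{1}{n} \sum_{k=1}^{n} D_h(x^k, x^{k-1})$, and substituting the finite-$n$ bound yields (iii).

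This argument is essentially routine, so I do not anticipate a genuine obstacle beyond bookkeeping. The only points requiring care are verifying that each iterate lies in $C$ so that Lemma~\ref{lemma:obj-decrease} is applicable (supplied by Assumption~\ref{ass2} and Lemma~\ref{lemma:mapping}) and that the lower bound $\Psi_* > -\infty$ is available to close the telescoping estimate (supplied by Assumption~\ref{ass1} (v)).
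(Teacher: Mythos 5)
Your proposal is correct and is exactly the argument the paper intends: the paper dispatches this proposition in one line (``follows immediately from Lemma~\ref{lemma:obj-decrease}, as in~\cite{bpg}''), and your write-up simply fills in that telescoping argument, including the two genuinely necessary checks---that Assumption~\ref{ass2} keeps every iterate in $C$ so Lemma~\ref{lemma:obj-decrease} applies, and that $\Psi(x^n)\geq\Psi_*>-\infty$ from Assumption~\ref{ass1} (v) closes the sum. No gaps; nothing further is needed.
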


    \subsection{Convergence Analysis of BPDCA}\label{subsec:convergence-analysis-of-bpdca}
    Suppose that the following conditions hold.
    \begin{assumption}
        \label{ass4}
        \quad
        \begin{enumerate}
            \renewcommand{\labelenumi}{\rm{(\roman{enumi})}}
            \item $\dom h = \real^d$ and $h$ is $\sigma$-strongly convex on $\real^d$.
            \item $\nabla h$ and $\nabla f_1$ are Lipschitz continuous on any bounded subset of $\real^d$.
            \item The objective function $\Psi$ is level-bounded; \ie, for any $r \in\real$, the lower level sets $\{ x\in\real^d\ |\ \Psi(x)\leq r \}$ are bounded.
        \end{enumerate}
    \end{assumption}

    Since $C = \interior\dom h = \real^d$ under Assumption~\ref{ass4} (i),
    Assumptions~\ref{ass2} and~\ref{ass2-ex} are automatically fulfilled.
    For nonconvex functions, we use the limiting subdifferential~\cite{varAna}(Definition~\ref{def:limiting-subdiff}).
    Inspired by Fermat's rule~\cite[Theorem 10.1]{varAna},
    we define the limiting critical points and the limiting stationary points of $\Psi$.
    \begin{definition}
        \label{def:critical-stationary}
        We say that $\tilde{x}$ is a limiting critical point of $(\mathcal{P})$ with $C \equiv \real^d$ if
        \begin{equation}
            \label{def:critical}
            0 \in \nabla f_1(\tilde{x}) - \partial_{\mathrm{c}} f_2(\tilde{x}) + \partial g(\tilde{x}).
        \end{equation}
        The set of all limiting critical points of $(\mathcal{P})$ is denoted by $\mathcal{X}$.
        In addition, we say that $\tilde{x}$ is a limiting stationary point of $(\mathcal{P})$ with $C \equiv \real^d$ if
        \begin{equation}
            \label{def:stationary}
            0 \in \partial \Psi(\tilde{x}).
        \end{equation}
    \end{definition}

    Although the limiting stationary points are sometimes called the limiting critical points in some papers, for example~\cite[Definition 1 (iv)]{palm}, we distinguish these two terms.
    The reasons are the following:
    When $\Psi$ is convex, we call $\tilde{x}$ a stationary point if it satisfies $0\in\partial_{\mathrm{c}}\Psi(\tilde{x})$.
    Because \eqref{def:stationary} is its natural extension by replacing $\partial_{\mathrm{c}}\Psi$ with $\partial\Psi$,
    we use the terminology ``limiting stationary point'' after~\cite[Definition 6.1.4]{cui2021modern}.
    We similarly name $\tilde{x}$ satisfying \eqref{def:critical}:
    When $g$ is convex, we call $\tilde{x}$ a critical point if it satisfies $0\in\nabla f_1(\tilde{x}) - \partial_{\mathrm{c}} f_2(\tilde{x}) + \partial_{\mathrm{c}} g(\tilde{x})$.
    Because \eqref{def:critical} is its natural extension by replacing $\partial_{\mathrm{c}}g$ with $\partial g$,
    we use the terminology ``limiting critical point.''

    The limiting stationary point is a first-order necessary condition for the local optimality.
    This relation is known as the generalized Fermat’s rule~\cite[Theorem 10.1]{varAna}.
    We can deduce $\partial(g - f_2)(x) \subseteq \partial g(x) - \partial_{\mathrm{c}} f_2(x)$ from~\cite[Corollary 3.4]{Mordukhovich06}.
    Plugging it into~\cite[Corollary 10.9]{varAna}, it generally holds that $\partial \Psi(x) \subseteq \nabla f_1(x) - \partial_{\mathrm{c}} f_2(x) + \partial g(x)$ for all $x\in\real^d$.
    It implies the limiting critical point is weaker than the limiting stationary point.
    When $f_2$ is $\mathcal{C}^1$ on $\real^d$, it holds that $\partial\Psi(x) \equiv \nabla f_1(x) - \nabla f_2(x) + \partial g(x)$
    from~\cite[Corollary 10.9]{varAna} or~\cite[Proposition 1.107 (ii)]{mordukhovich2006variational} and the definition of the limiting subdifferentials of $f_2$ and $g$.
    Thus, every limiting critical point is a limiting stationary point when $f_2$ is $\mathcal{C}^1$.

    Next, using Lemma~\ref{lemma:obj-decrease} and Proposition~\ref{prop:bpdca-property},
    we will show global subsequential convergence of the iterates to a limiting critical point of the problem $(\mathcal{P})$.
    We can easily see that Theorem~\ref{theorem:global-subsequential-conv-bpdca} (i) holds from the level-boundedness of $\Psi$.
    Theorem~\ref{theorem:global-subsequential-conv-bpdca} (iii) and (vi) will play an essential role
    in determining the global convergence and the rate of convergence of BPDCA.

    \begin{theorem}[Global subsequential convergence of BPDCA]
        \label{theorem:global-subsequential-conv-bpdca}
        Suppose that Assumptions~$\ref{ass1}$,~$\ref{ass2}$, and $\ref{ass4}$ hold.
        Let $\{x^k\}_{k=0}^{\infty}$ be a sequence generated by BPDCA with $0 < \lambda L < 1$ for solving $(\mathcal{P})$.
        Then, the following statements hold:
        \begin{enumerate}
            \renewcommand{\labelenumi}{\rm{(\roman{enumi})}}
            \item The sequence $\{x^k\}_{k=0}^{\infty}$ is bounded.
            \item The sequence $\{\xi^k\}_{k=0}^{\infty}$ is bounded.
            \item $\lim_{k\to\infty}\|x^{k+1} - x^k\| = 0$.
            \item Any accumulation point of $\{x^k\}_{k=0}^{\infty}$ is a limiting critical point of $(\mathcal{P})$.
        \end{enumerate}
    \end{theorem}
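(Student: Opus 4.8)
The plan is to dispatch parts (i)--(iii) quickly from the results already in hand and then concentrate the real work on part (iv). For (i), since Proposition~\ref{prop:bpdca-property} (i) gives $\Psi(x^k)\leq\Psi(x^0)$ for every $k$, the whole sequence lies in the lower level set $\{x\mid\Psi(x)\leq\Psi(x^0)\}$, which is bounded by Assumption~\ref{ass4} (iii); hence $\{x^k\}$ is bounded. For (ii), I would use that Assumption~\ref{ass4} (i) forces $C=\real^d$, so together with the inclusion $C\subset\dom f_2$ noted after Assumption~\ref{ass1} we get $\dom f_2=\real^d$, making $f_2$ a finite convex function on $\real^d$; its subdifferential is therefore locally bounded, \ie\ uniformly bounded on the bounded set containing $\{x^k\}$, so $\{\xi^k\}$ is bounded. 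For (iii), $\sigma$-strong convexity of $h$ (Assumption~\ref{ass4} (i)) yields $D_h(x,y)\geq\frac{\sigma}{2}\|x-y\|^2$, and since Proposition~\ref{prop:bpdca-property} (ii) gives $D_h(x^k,x^{k-1})\to0$, I conclude $\|x^{k+1}-x^k\|\to0$.

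The heart of the argument is (iv). First I would write down the first-order optimality condition for subproblem~\eqref{subprob:bpdca}, using the limiting subdifferential because $g$ may be nonconvex; differentiating the smooth part gives
\[
-\nabla f_1(x^k)+\xi^k-\tfrac{1}{\lambda}\bigl(\nabla h(x^{k+1})-\nabla h(x^k)\bigr)\in\partial g(x^{k+1}).
\]
Let $\tilde x$ be an accumulation point with $x^{k_j}\to\tilde x$. By (iii) we also have $x^{k_j+1}\to\tilde x$; by (ii), after passing to a further subsequence, $\xi^{k_j}\to\tilde\xi$, and outer semicontinuity of $\partial_{\mathrm c}f_2$ (closedness of its graph for convex $f_2$) gives $\tilde\xi\in\partial_{\mathrm c}f_2(\tilde x)$. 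Continuity of $\nabla f_1$ and of $\nabla h$ (Assumption~\ref{ass4} (ii)) then makes the left-hand side above converge to $-\nabla f_1(\tilde x)+\tilde\xi$.

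The one step that is not automatic, and the main obstacle, is that to close the inclusion via the closed-graph property of the limiting subdifferential $\partial g$ I need the \emph{$g$-attentive convergence} $x^{k_j+1}\xrightarrow{g}\tilde x$, that is $g(x^{k_j+1})\to g(\tilde x)$ and not merely $x^{k_j+1}\to\tilde x$. I would obtain this by plugging $u=\tilde x$ into the global optimality defining $x^{k_j+1}$, which bounds $g(x^{k_j+1})$ above by $g(\tilde x)$ plus inner-product and Bregman terms that all vanish in the limit: the inner products because $\nabla f_1(x^{k_j})-\xi^{k_j}$ is bounded while $x^{k_j+1}-x^{k_j}\to0$ and $\tilde x-x^{k_j}\to0$, and the Bregman terms because $D_h(x^{k_j+1},x^{k_j})\to0$ and $D_h(\tilde x,x^{k_j})\to0$ by continuity of $h$ and $\nabla h$. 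This yields $\limsup_j g(x^{k_j+1})\leq g(\tilde x)$, which together with lower semicontinuity of $g$ forces $g(x^{k_j+1})\to g(\tilde x)$.

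With $g$-attentive convergence secured, the closedness of the graph of $\partial g$ gives $-\nabla f_1(\tilde x)+\tilde\xi\in\partial g(\tilde x)$, and since $\tilde\xi\in\partial_{\mathrm c}f_2(\tilde x)$, rearranging yields $0\in\nabla f_1(\tilde x)-\partial_{\mathrm c}f_2(\tilde x)+\partial g(\tilde x)$, \ie\ $\tilde x$ is a limiting critical point in the sense of~\eqref{def:critical}. The routine parts are the vanishing-limit estimates; the conceptually delicate point is recognizing that plain convergence of iterates is insufficient for the limiting subdifferential and that the subproblem's own optimality must be exploited to upgrade it to $g$-attentive convergence.
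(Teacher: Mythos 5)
Your proposal is correct and follows the same skeleton as the paper's proof: parts (i) and (iii) are essentially identical (level-boundedness plus the sufficient-decrease inequality with $\sigma$-strong convexity of $h$), and part (iv) uses the same chain of optimality condition for~\eqref{subprob:bpdca}, vanishing of the $\nabla h$ gap via Lipschitz continuity on bounded sets, extraction of a convergent subsequence $\xi^{k_j}\to\tilde\xi\in\partial_{\mathrm{c}}f_2(\tilde{x})$, and passage to the limit in the inclusion. The two points where you diverge are worth noting. In (ii), you replace the paper's self-contained contradiction argument (testing the subgradient inequality at $y=x^k+\xi^k/\|\xi^k\|$ to force $\|\xi^k\|\leq f_2(x^k+d^k)-f_2(x^k)$, then bounding the right side on a compact set) by citing the standard local boundedness of the subdifferential of a finite convex function on $\real^d$; these are the same fact, and the paper's argument is just an inline proof of the result you cite. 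In (iv), your explicit derivation of $g$-attentive convergence, \ie\ $g(x^{k_j+1})\to g(\tilde{x})$, obtained by plugging $u=\tilde{x}$ into the global optimality of $x^{k_j+1}$ and combining the resulting $\limsup_j g(x^{k_j+1})\leq g(\tilde{x})$ with lower semicontinuity, is actually \emph{more} careful than the paper, which compresses this into the phrase ``invoking the lower semicontinuity of $g$'': since the graph of the limiting subdifferential $\partial g$ is closed only along $g$-attentive sequences (Definition~\ref{def:limiting-subdiff}), your extra step is precisely what is needed to make that line of the paper's proof fully rigorous, at the cost of a few additional vanishing-limit estimates (all of which you justify correctly, using boundedness of $\nabla f_1(x^{k_j})-\xi^{k_j}$ and continuity of $h$ and $\nabla h$ under Assumption~\ref{ass4}).
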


    \begin{proof}
    (i)
        From Proposition~\ref{prop:bpdca-property}, we obtain
        $\Psi(x^k) \leq \Psi(x^0)$ for all $k \in \mathbb{N}$,
        which shows that $\{x^k\}_{k=0}^{\infty}$ is bounded
        from Assumption~\ref{ass4} (iii).

        (ii)
        From Assumption~\ref{ass1} (ii),~\ref{ass4} (i), and the convexity of $f_2$, $\dom f_2=\real^d$ and $\partial_{\mathrm{c}} f_2 (x^k) \neq \emptyset$.
        Suppose, for the sake of proof by contradiction, that $\{\xi^{k}\}_{k=0}^{\infty}$ is unbounded, \ie, $\|\xi^{k}\|\to\infty$ as $k\to\infty$.
        By the definition of the subgradients of convex functions, we have that, for any $y\in\real^d$,
        \begin{align}
            \label{ineq:subgradient}
            f_2(y) \geq f_2(x^{k}) + \langle \xi^{k}, y - x^{k} \rangle.
        \end{align}
        Assume for a moment that $\|\xi^k\| \neq 0$.
        Letting $\{d^{k}\}_{k=0}^{\infty}$ be the subsequence given by $d^{k} = \xi^{k}/\|\xi^{k}\|$ and
        substituting $x^{k} + d^{k} = x^{k} + \xi^{k}/\|\xi^{k}\|$ into $y$ in~\eqref{ineq:subgradient}, we obtain
        \begin{align*}
            f_2(x^{k} + d^{k}) \geq f_2(x^{k}) + \left\langle \xi^{k}, d^k \right\rangle
            = f_2(x^{k}) + \|\xi^{k}\|,
        \end{align*}
        which is also true when $\|\xi^k\| = 0$ by defining $d^k = 0$.
        By taking $k\to\infty$, we obtain
        \begin{align}
            \label{ineq:subgrad-limsup}
            \limsup_{k\to\infty}\|\xi^{k}\| \leq\limsup_{k\to\infty}\left( f_2(x^{k} + d^{k}) - f_2(x^{k}) \right).
        \end{align}
        We can take a compact set $S$ such that $\{x^{k} + d^{k}\}_{k=0}^{\infty} \subset S$, since $\{x^{k} + d^{k}\}_{k=0}^{\infty}$ is bounded.
        For $\bar{x} \in \argmax_{x \in S} f_2(x)$,
        since $f_2$ is continuous because of its convexity on $\real^d$ and $\{x^k\}_{k=0}^{\infty}$ is bounded, it holds that
        \begin{align}
            \label{ineq:bounded-subgrad-limsup}
            \limsup_{k\to\infty}\left( f_2(x^{k} + d^{k}) - f_2(x^{k}) \right)
            \leq f_2(\bar{x}) - \bar{f}_2 < \infty,
        \end{align}
        for some value $\bar{f}_2 \leq f_2(x^k), k\geq 0$.
        \eqref{ineq:subgrad-limsup} and \eqref{ineq:bounded-subgrad-limsup} contradict to $\|\xi^{k}\|\to\infty$.

        (iii)
        From~\eqref{ineq:obj-decrease}, we obtain
        \begin{align}
            \Psi(x^{k-1}) - \Psi(x^k)
            &\geq \left( \frac{1}{\lambda} - L \right)D_h(x^k, x^{k-1})\nonumber\\
            &\geq \left( \frac{1}{\lambda} - L \right)\frac{\sigma}{2}\|x^k - x^{k-1}\|^2,\label{ineq:sufficient-decrease}
        \end{align}
        where the last inequality holds since $h$ is a $\sigma$-strongly convex function from Assumption~\ref{ass4} (i).
        Summing the above inequality from $k=1$ to $\infty$, we obtain
        \begin{align*}
            \left( \frac{1}{\lambda} - L \right)\sum_{k=1}^{\infty}\frac{\sigma}{2}\|x^k - x^{k-1}\|^2
            \leq \Psi(x^0) - \liminf_{n\to\infty}\Psi(x^n)
            \leq \Psi(x^0) - v(\mathcal{P}) < \infty,
        \end{align*}
        which shows that $\lim_{k\to\infty}\|x^{k+1} - x^k\| = 0$.

        (iv)
        Let $\tilde{x}$ be an accumulation point of $\{x^k\}_{k=0}^{\infty}$
        and let $\{x^{k_j}\}$ be a subsequence such that $\lim_{j\to\infty}x^{k_j} = \tilde{x}$.
        Then, from the first-order optimality condition of subproblem~\eqref{subprob:bpdca}
        under Assumption~\ref{ass2}, we have
        \begin{align*}
            0 \in \partial g(x^{k_j+1}) + \nabla f_1(x^{k_j}) - \xi^{k_j} + \frac{1}{\lambda}\left(\nabla h(x^{k_j + 1}) - \nabla h(x^{k_j}) \right).
        \end{align*}
        Therefore,
        \begin{align}
            \label{con:1st-k_j}
            \xi^{k_j} + \frac{1}{\lambda}\left(\nabla h(x^{k_j}) - \nabla h(x^{k_j+1}) \right) \in \partial g(x^{k_j+1}) + \nabla f_1(x^{k_j}).
        \end{align}
        From the boundedness of $\{x^{k_j}\}$ and the Lipschitz continuity of $\nabla h$ on a bounded subset of $\real^d$,
        there exists $A_0 > 0$ such that
        \begin{align*}
            \left\| \frac{1}{\lambda}\left(\nabla h(x^{k_j}) - \nabla h(x^{k_j+1}) \right) \right\|
            &\leq \frac{A_0}{\lambda}\| x^{k_j+1} - x^{k_j} \|.
        \end{align*}
        Therefore,
        using $\|x^{k_j+1} - x^{k_j}\| \to 0$,
        we obtain
        \begin{align}
            \label{lim:1st-con}
            \frac{1}{\lambda}\left(\nabla h(x^{k_j}) - \nabla h(x^{k_j+1}) \right) \to 0.
        \end{align}
        Note that the sequence $\{\xi^{k_j}\}$ is bounded due to (ii).
        Thus, by taking the limit as $j\to\infty$ or more precisely, its subsequence, we can assume without loss of generality
        that $\lim_{j\to\infty}\xi^{k_j} =:\tilde{\xi}$ exists, which belongs to $\partial_{\mathrm{c}} f_2(\tilde{x})$
        since $f_2$ becomes continuous due to its convexity on $\real^d$.
        Using this and~\eqref{lim:1st-con},
        we can take the limit of~\eqref{con:1st-k_j}.
        Setting $\|x^{k_j+1} - x^{k_j}\| \to 0$ and invoking
        the lower semicontinuity of $g$ and $\nabla f_1$,
        we obtain $\tilde{\xi}\in\partial g(\tilde{x})+\nabla f_1(\tilde{x})$.
        Therefore, $0 \in \partial g(\tilde{x}) + \nabla f_1(\tilde{x}) - \partial_{\mathrm{c}} f_2(\tilde{x})$,
        which shows that $\tilde{x}$ is a limiting critical point of $(\mathcal{P})$.
        \qed
    \end{proof}

    We can estimate the objective value at an accumulation point from
    $\liminf_{j\to\infty}\Psi(x^{k_j})$ and $\limsup_{j\to\infty}\Psi(x^{k_j})$.
    Consequently, we can prove that $\Psi$ is constant on the set of accumulation points of BPDCA.

    \begin{proposition}
        \label{prop:zeta}
        Suppose that Assumptions~$\ref{ass1}$,~$\ref{ass2}$, and $\ref{ass4}$ hold.
        Let $\{x^k\}_{k=0}^{\infty}$ be a sequence generated by BPDCA with $0 < \lambda L < 1$ for solving $(\mathcal{P})$.
        Then, the following statements hold:
        \begin{enumerate}
            \renewcommand{\labelenumi}{\rm{(\roman{enumi})}}
            \item $\zeta := \lim_{k\to\infty}\Psi(x^k)$ exists.
            \item $\Psi \equiv \zeta$ on $\Omega$, where $\Omega$ is the set of accumulation points of $\{x^k\}_{k=0}^{\infty}$.
        \end{enumerate}
    \end{proposition}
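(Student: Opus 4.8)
The plan is to treat the two assertions separately, since (i) is an immediate consequence of monotonicity while (ii) carries the real content. For (i), I would invoke Proposition~\ref{prop:bpdca-property}~(i), which guarantees that $\{\Psi(x^k)\}_{k=0}^{\infty}$ is non-increasing. Since $\Psi$ is bounded below by $v(\mathcal{P}) > -\infty$ (Assumption~\ref{ass1}~(v)), a monotone bounded sequence of reals converges, and hence $\zeta := \lim_{k\to\infty}\Psi(x^k)$ exists. Nothing more is needed here.

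For (ii), the difficulty is that $\Psi$ need not be continuous: writing $\Psi = (f_1 - f_2) + g$, the part $f_1 - f_2$ is continuous on $\real^d$ (as $f_1$ is $\mathcal{C}^1$ and $f_2$ is finite-valued convex, hence continuous under Assumption~\ref{ass4}~(i)), but $g$ is merely lower semicontinuous. So for a subsequence $x^{k_j}\to\tilde{x}$ with $\tilde{x}\in\Omega$, we cannot directly pass $\Psi(x^{k_j})$ to $\Psi(\tilde{x})$. My strategy is to show separately that $\lim_{j\to\infty} g(x^{k_j}) = g(\tilde{x})$; combined with the continuity of $f_1-f_2$ this gives $\Psi(x^{k_j})\to\Psi(\tilde{x})$, and since the whole sequence $\{\Psi(x^k)\}$ already converges to $\zeta$ by (i), we conclude $\Psi(\tilde{x})=\zeta$. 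One inequality is free: lower semicontinuity of $g$ yields $\liminf_{j}g(x^{k_j})\ge g(\tilde{x})$.

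The hard part is the reverse estimate $\limsup_{j}g(x^{k_j})\le g(\tilde{x})$, and this is where I would exploit the variational structure. I would use that $x^{k_j}$ is the global minimizer of subproblem~\eqref{subprob:bpdca} centered at $x^{k_j-1}$, and compare its objective value against the feasible point $\tilde{x}\in\overline{C}$ (feasible since the iterates lie in $C$ and $\tilde{x}$ is their limit). After rearranging, this reads
\begin{align*}
g(x^{k_j}) \le g(\tilde{x}) + \langle \nabla f_1(x^{k_j-1}) - \xi^{k_j-1},\, \tilde{x} - x^{k_j} \rangle + \tfrac{1}{\lambda}\bigl( D_h(\tilde{x}, x^{k_j-1}) - D_h(x^{k_j}, x^{k_j-1}) \bigr),
\end{align*}
and taking $\limsup_{j\to\infty}$ should send every term on the right but $g(\tilde{x})$ to zero.

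To justify that, I would assemble the ingredients already proved: from Theorem~\ref{theorem:global-subsequential-conv-bpdca}~(iii), $\|x^{k_j}-x^{k_j-1}\|\to 0$, so that $x^{k_j-1}\to\tilde{x}$ as well; from (ii) the multipliers $\{\xi^{k_j-1}\}$ are bounded, and $\nabla f_1$ is bounded along the bounded iterates by Assumption~\ref{ass4}~(ii), whence the inner-product term vanishes because $\tilde{x}-x^{k_j}\to 0$. Continuity of $\nabla h$ (Assumption~\ref{ass4}~(ii)) makes $D_h$ continuous, so $D_h(\tilde{x},x^{k_j-1})\to D_h(\tilde{x},\tilde{x})=0$ and $D_h(x^{k_j},x^{k_j-1})\to 0$. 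This yields $\limsup_j g(x^{k_j})\le g(\tilde{x})$, and together with the $\liminf$ bound gives $\lim_j g(x^{k_j})=g(\tilde{x})$; since $\tilde{x}\in\Omega$ was arbitrary, $\Psi\equiv\zeta$ on $\Omega$. The main obstacle is precisely controlling the lower-semicontinuous term $g$: everything hinges on converting the minimality of $x^{k_j}$ into the $\limsup$ bound, which in turn relies on the step-length estimate and the boundedness of $\{\xi^k\}$ and $\nabla f_1$ from Theorem~\ref{theorem:global-subsequential-conv-bpdca}.
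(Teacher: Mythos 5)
Your proposal is correct and follows essentially the same route as the paper: part (i) from monotonicity plus the lower bound $v(\mathcal{P})$, and part (ii) by testing the global optimality of $x^{k_j}$ in subproblem~\eqref{subprob:bpdca} against the accumulation point, killing the inner-product and Bregman terms via boundedness of $\{\xi^k\}$, vanishing step lengths, and continuity of $\nabla h$, then closing with lower semicontinuity. The only cosmetic difference is that you isolate $g$ and use continuity of $f_1-f_2$, whereas the paper keeps $g+f_1$ together via the gradient inequality for $f_1$ and invokes continuity of $-f_2$ and lower semicontinuity of $\Psi$.
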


    \begin{proof}
    (i)
        From Assumption~\ref{ass1} (v) and Proposition~\ref{prop:bpdca-property} (i),
        the sequence $\{\Psi(x^k)\}_{k=0}^{\infty}$ is bounded from below and non-increasing.
        Consequently, $\zeta := \lim_{k\to\infty}\Psi(x^k)$ exists.

        (ii)
        Take any $\hat{x}\in\Omega$, that is $\lim_{j\to\infty}x^{k_j}=\hat{x}$.
        From~\eqref{subprob:bpdca}, it follows that
        \begin{align*}
            g(x^{k})&
            + \langle \nabla f_1(x^{k-1}) - \xi^{k-1}, x^{k} - x^{k-1}\rangle
            + \frac{1}{\lambda}D_h(x^{k}, x^{k-1})\\
            &\leq g(\hat{x}) + \langle \nabla f_1(x^{k-1}) - \xi^{k-1}, \hat{x} - x^{k-1}\rangle
            + \frac{1}{\lambda}D_h(\hat{x}, x^{k-1}).
        \end{align*}
        From the above inequality and the fact that $f_1$ is convex at $x^k$, we obtain
        \begin{align*}
            g(x^{k}) + f_1(x^k) \leq g(\hat{x})
            &+ \langle \nabla f_1(x^{k-1}) - \xi^{k-1}, \hat{x} - x^{k}\rangle
            + \frac{1}{\lambda}D_h(\hat{x}, x^{k-1}) - \frac{1}{\lambda}D_h(x^{k}, x^{k-1})\\
            &+ f_1(\hat{x})+\langle\nabla f_1(x^k),x^k-\hat{x}\rangle.
        \end{align*}
        Substituting $k_j$ for $k$ and limiting $j$ to $\infty$, we have, from Proposition~\ref{prop:bpdca-property} (ii),
        \begin{align*}
            \limsup_{j\to\infty} \left( g(x^{k_j}) + f_1(x^{k_j}) \right)\leq g(\hat{x}) + f_1(\hat{x}),
        \end{align*}
        which provides $\limsup_{j\to\infty}\Psi(x^{k_j})\leq\Psi(\hat{x})$ from the continuity of $- f_2$ since $f_2$ is convex.
        Combining this and the lower semicontinuity of $\Psi$ yields $\Psi(x^{k_j})\to\Psi(\hat{x})=:\zeta$ as $j\to\infty$.
        Since $\hat{x}\in\Omega$ is arbitrary, we conclude that $\Psi \equiv \zeta$ on $\Omega$.
        \qed
    \end{proof}

    To discuss the global convergence of BPDCA, we will suppose either of the following two assumptions.
    \begin{assumption}
        \label{ass5}
        $f_2$ is continuously differentiable on an open set $\mathcal{N}_0\subset\real^d$ that contains
        the set of all limiting critical points of $\Psi$, \ie, $\mathcal{X}$.
        Furthermore, $\nabla f_2$ is locally Lipschitz continuous on $\mathcal{N}_0$.
    \end{assumption}
    \begin{assumption}
        \label{ass5g}
        $g$ is differentiable on $\real^d$ and
        $\nabla g$ is locally Lipschitz continuous on an open set $\mathcal{N}_0\subset \real^d$
        that contains the set of all limiting stationary points of $-\Psi$.
    \end{assumption}

    Assumption~\ref{ass5} is nonrestrictive because many functions in~\cite{pdcae}, including the $f_2$
    in our numerical experiments, satisfy it.
    Thus, let us discuss the global convergence of Algorithm~\ref{alg:bpdca} under Assumption~\ref{ass5} by following the argument presented in ~\cite{pdcae}.
    Note that every limiting critical point is a limiting stationary point from the differentiability of $f_2$ under Assumption~\ref{ass5}.
    \begin{theorem}[Global convergence of BPDCA under the local differentiability of $f_2$]
    \label{theorem:global-convergence-bpdca}
    Suppose that Assumptions~$\ref{ass1}$,~$\ref{ass2}$,~$\ref{ass4}$, and~$\ref{ass5}$ hold and that $\Psi$ is a KL function.
    Let $\{x^k\}_{k=0}^{\infty}$ be a sequence generated by BPDCA with $0 < \lambda L < 1$ for solving $(\mathcal{P})$.
    Then, the following statements hold:
    \begin{enumerate}
        \renewcommand{\labelenumi}{\rm{(\roman{enumi})}}
        \item $\lim_{k\to\infty}\dist(0, \partial \Psi(x^k)) = 0$.
        \item The sequence $\{x^k\}_{k=0}^{\infty}$ converges to a limiting stationary point of $(\mathcal{P})$;
        moreover, $\sum_{k=1}^{\infty} \|x^k - x^{k-1}\| < \infty$.
    \end{enumerate}
    \end{theorem}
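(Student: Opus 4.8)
The plan is to follow the by-now-standard Kurdyka-\L ojasiewicz descent framework (as in~\cite{palm,pdcae}): establish (a) a sufficient-decrease inequality bounding $\Psi(x^k)-\Psi(x^{k+1})$ below by $\|x^{k+1}-x^k\|^2$, (b) a subgradient (relative-error) bound controlling $\dist(0,\partial\Psi(x^{k+1}))$ above by $\|x^{k+1}-x^k\|$, and (c) the constancy of $\Psi$ on the accumulation set, and then combine these with the uniformized KL inequality to force summability of the steps. Ingredient (a) is already in hand: combining Lemma~\ref{lemma:obj-decrease} with the $\sigma$-strong convexity of $h$ (Assumption~\ref{ass4}(i)) gives $\Psi(x^{k})-\Psi(x^{k+1})\geq(\tfrac1\lambda-L)\tfrac\sigma2\|x^{k+1}-x^k\|^2$, which is exactly inequality~\eqref{ineq:sufficient-decrease}. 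Ingredient (c) is Proposition~\ref{prop:zeta}(ii). So the real work is (b), which is statement~(i) of the theorem.

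For part~(i), I would start from the first-order optimality condition of subproblem~\eqref{subprob:bpdca}, which (as in the proof of Theorem~\ref{theorem:global-subsequential-conv-bpdca}(iv)) yields
\[
\xi^k-\nabla f_1(x^k)+\tfrac1\lambda\bigl(\nabla h(x^k)-\nabla h(x^{k+1})\bigr)\in\partial g(x^{k+1}).
\]
The crucial localization step is to note that, since $\{x^k\}_{k=0}^\infty$ is bounded (Theorem~\ref{theorem:global-subsequential-conv-bpdca}(i)), the accumulation set $\Omega$ is nonempty and compact and satisfies $\dist(x^k,\Omega)\to0$; since every point of $\Omega$ is a limiting critical point (Theorem~\ref{theorem:global-subsequential-conv-bpdca}(iv)), we have $\Omega\subset\mathcal{X}\subset\mathcal{N}_0$ with $\mathcal{N}_0$ open, whence $x^k\in\mathcal{N}_0$ for all large $k$. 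On $\mathcal{N}_0$ the convex $f_2$ is $\mathcal{C}^1$ (Assumption~\ref{ass5}), so $\xi^k=\nabla f_2(x^k)$ and the sum rule gives $\partial\Psi(x^{k+1})=\nabla f_1(x^{k+1})-\nabla f_2(x^{k+1})+\partial g(x^{k+1})$. Substituting the displayed subgradient produces the explicit element
\[
\bigl(\nabla f_1(x^{k+1})-\nabla f_1(x^k)\bigr)-\bigl(\nabla f_2(x^{k+1})-\nabla f_2(x^k)\bigr)+\tfrac1\lambda\bigl(\nabla h(x^k)-\nabla h(x^{k+1})\bigr)\in\partial\Psi(x^{k+1}).
\]
Bounding its norm via the Lipschitz continuity of $\nabla f_1,\nabla h$ on bounded sets (Assumption~\ref{ass4}(ii)) and of $\nabla f_2$ on $\mathcal{N}_0$ (Assumption~\ref{ass5}) yields $\dist(0,\partial\Psi(x^{k+1}))\leq b\,\|x^{k+1}-x^k\|$ for a constant $b>0$, and part~(i) follows since $\|x^{k+1}-x^k\|\to0$ by Theorem~\ref{theorem:global-subsequential-conv-bpdca}(iii).

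For part~(ii), I would invoke the uniformized KL property (Lemma~\ref{lemma:uniformized-kl}) on the compact set $\Omega$, on which $\Psi\equiv\zeta$ (Proposition~\ref{prop:zeta}); this furnishes $\epsilon,\eta>0$ and $\phi\in\Xi_\eta$ with $\phi'(\Psi(x)-\zeta)\dist(0,\partial\Psi(x))\geq1$ whenever $\dist(x,\Omega)<\epsilon$ and $\zeta<\Psi(x)<\zeta+\eta$. After discarding the trivial case in which $\Psi(x^k)=\zeta$ for some finite $k$ (where sufficient decrease forces the sequence to be eventually constant), for all large $k$ both conditions hold because $\dist(x^k,\Omega)\to0$ and $\Psi(x^k)\downarrow\zeta$. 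Writing $\Delta_k:=\phi(\Psi(x^k)-\zeta)$ and using concavity of $\phi$ together with the KL inequality, ingredient~(a), and the bound from part~(i) (at index $k$), one obtains $\|x^{k+1}-x^k\|^2\leq \tfrac{b}{a}(\Delta_k-\Delta_{k+1})\|x^k-x^{k-1}\|$ with $a=(\tfrac1\lambda-L)\tfrac\sigma2$; an AM-GM step turns this into $2\|x^{k+1}-x^k\|\leq\|x^k-x^{k-1}\|+\tfrac{b}{a}(\Delta_k-\Delta_{k+1})$. Summing and telescoping (using $\phi\geq0$, so $\Delta_k\geq0$) gives $\sum_{k}\|x^{k+1}-x^k\|<\infty$, hence $\{x^k\}_{k=0}^\infty$ is Cauchy and converges; by Theorem~\ref{theorem:global-subsequential-conv-bpdca}(iv) its limit is a limiting critical point, which under Assumption~\ref{ass5} is a limiting stationary point.

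I expect the main obstacle to be the localization underpinning part~(i): Assumption~\ref{ass5} only guarantees smoothness of $f_2$ on a neighborhood $\mathcal{N}_0$ of the critical set, so one must rigorously argue that the iterates \emph{eventually enter} $\mathcal{N}_0$ before the identity $\xi^k=\nabla f_2(x^k)$ and the sum rule for $\partial\Psi$ may be invoked. This rests on the fact that a bounded sequence satisfies $\dist(x^k,\Omega)\to0$ and that $\Omega$ consists of critical points, together with the openness of $\mathcal{N}_0$. Everything afterward is the routine KL bookkeeping, with only the AM-GM and telescoping steps requiring care.
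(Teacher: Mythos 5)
Your proposal is correct and follows essentially the same route as the paper's own proof: the same sufficient-decrease inequality~\eqref{ineq:sufficient-decrease}, the same subgradient bound derived from the first-order optimality condition of~\eqref{subprob:bpdca} after localizing the tail of the iterates in $\mathcal{N}_0$ (via $\dist(x^k,\Omega)\to0$ and $\Omega\subseteq\mathcal{X}$), and the same uniformized-KL, AM-GM, and telescoping bookkeeping, modulo an immaterial index shift. The only detail you gloss --- which the paper handles explicitly by shrinking $\mu$ around the compact set $\Omega$ --- is upgrading the merely \emph{local} Lipschitz continuity of $\nabla f_2$ on $\mathcal{N}_0$ (Assumption~\ref{ass5}) to a uniform Lipschitz constant on the neighborhood $\mathcal{N}$ containing the iterates, a standard compactness argument.
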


    \begin{proof}
    (i)
        Since $\{x^k\}_{k=0}^{\infty}$ is bounded
        and $\Omega$ is the set of accumulation points of $\{x^k\}_{k=0}^{\infty}$,
        we have
        \begin{align}
            \label{eq:convergence-dist-x-omega}
            \lim_{k\to\infty}\dist(x^k, \Omega) = 0.
        \end{align}
        From Theorem~\ref{theorem:global-subsequential-conv-bpdca} (iv), we also have $\Omega\subseteq\mathcal{X}$.
        Thus, for any $\mu>0$, there exists $k_0>0$ such that $\dist(x^k, \Omega)<\mu$ and $x^k\in\mathcal{N}_0$
        for any $k \geq k_0$,
        where $\mathcal{N}_0$ is defined in Assumption~\ref{ass5}.
        As for $\mathcal{N}_0$, since $\Omega$ is compact from the boundedness of $\{x^k\}_{k=0}^{\infty}$,
        by decreasing $\mu$, if needed, we can suppose without loss of generality that $\nabla f_2$ is globally Lipschitz continuous on $\mathcal{N}:=\{ x\in\mathcal{N}_0 \mid \dist(x, \Omega)<\mu\}$.

        The subdifferential of $\Psi$ at $x^k$ for $k \geq k_0$ is
        \begin{align}
            \label{partial-psi}
            \partial \Psi(x^k) = \nabla f_1(x^k) - \nabla f_2(x^k) + \partial g(x^k).
        \end{align}
        Moreover, considering the first-order optimality condition of subproblem~\eqref{subprob:bpdca}, we have that, for any $k \geq k_0 + 1$,
        \begin{align*}
            \frac{1}{\lambda}\left(\nabla h(x^{k-1}) - \nabla h(x^k) \right) - \nabla f_1(x^{k-1}) + \nabla f_2(x^{k-1}) \in \partial g(x^k),
        \end{align*}
        since $f_2$ is $\mathcal{C}^1$ on $\mathcal{N}$ and $x^{k-1}\in\mathcal{N}$ for any $k \geq k_0 + 1$.
        Using the above and~\eqref{partial-psi}, we see that
        \begin{align*}
            \frac{1}{\lambda}\left(\nabla h(x^{k-1}) - \nabla h(x^k) \right) + \nabla f_1(x^k) - \nabla f_1(x^{k-1}) + \nabla f_2(x^{k-1}) - \nabla f_2(x^k) \in \partial \Psi(x^k).
        \end{align*}
        From the global Lipschitz continuity of $\nabla f_1, \nabla f_2$, and $\nabla h$,
        there exists $A_1 > 0$ such that
        \begin{align}
            \label{ineq:dist-euc}
            \dist(0, \partial\Psi(x^k)) \leq A_1\|x^k - x^{k-1}\|,
        \end{align}
        where $k \geq k_0 + 1$.
        From Theorem~\ref{theorem:global-subsequential-conv-bpdca} (iii), we conclude that $\lim_{k\to\infty}\dist(0, \partial \Psi(x^k)) = 0$.

        (ii)
        From Theorem~\ref{theorem:global-subsequential-conv-bpdca} (iv), it is sufficient to prove that $\{x^k\}_{k=0}^{\infty}$ is convergent.
        Here, consider the case in which there exists a positive integer $k > 0$ such that $\Psi(x^k) = \zeta$.
        From Proposition~\ref{prop:bpdca-property} (i) and Proposition~\ref{prop:zeta} (i),
        the sequence $\{\Psi(x^k)\}_{k=0}^{\infty}$ is non-increasing and converges to $\zeta$.
        Hence, for any $\hat{k}\geq0$, we have $\Psi(x^{k + \hat{k}}) = \zeta$.
        By recalling~\eqref{ineq:sufficient-decrease}, we conclude that there exists a positive scalar $A_2$ such that
        \begin{align}
            \label{ineq:sufficient-decrease-rho}
            \Psi(x^{k-1}) - \Psi(x^k)
            \geq A_2\|x^k - x^{k-1}\|^2,\quad \forall k \in \mathbb{N}.
        \end{align}
        From~\eqref{ineq:sufficient-decrease-rho}, we obtain $x^k = x^{k+\hat{k}}$ for any $\hat{k}\geq0$,
        meaning that $\{x^k\}_{k=0}^{\infty}$ is finitely convergent.

        Next, consider the case in which $\Psi(x^k) > \zeta$ for all $k\geq0$.
        Since $\{x^k\}_{k=0}^{\infty}$ is bounded,
        $\Omega$ is a compact subset of $\dom\partial\Psi$ and $\Psi \equiv \zeta$ on $\Omega$ from Proposition~\ref{prop:zeta} (ii).
        From Lemma~\ref{lemma:uniformized-kl}
        and since $\Psi$ is a KL function, there exist a positive scalar $\epsilon > 0$ and
        a continuous concave function $\phi \in \Xi_{\eta}$ with $\eta > 0$ such that
        \begin{align}
            \label{ineq:phi-kl}
            \phi'(\Psi(x) - \zeta) \cdot \dist(0, \partial \Psi(x)) \geq 1,
        \end{align}
        for all $x \in U$, where
        \begin{align*}
            U = \left\{ x\in\real^d\ \middle|\ \dist(x, \Omega) < \epsilon \right\} \cap
            \left\{ x\in\real^d\ \middle|\ \zeta < \Psi(x) < \zeta + \eta \right\}.
        \end{align*}
        From~\eqref{eq:convergence-dist-x-omega}, there exists $k_1 > 0$ such that $\dist(x^k, \Omega) < \epsilon$ for any $k \geq k_1$.
        Since $\{\Psi(x^k)\}_{k=0}^{\infty}$ is non-increasing and converges to $\zeta$,
        there exists $k_2 > 0$ such that $\zeta < \Psi(x^k) < \zeta + \eta$ for all $ k \geq k_2$.
        Taking $\bar{k} = \max\{k_0+1, k_1, k_2\}$, then $\{x^k\}_{k \geq \bar{k}}$ belongs to $U$.
        Hence, from~\eqref{ineq:phi-kl}, we obtain
        \begin{align}
            \label{ineq:phi-kl-iter}
            \phi'(\Psi(x^k) - \zeta) \cdot \dist(0, \partial \Psi(x^k)) \geq 1,\quad \forall k \geq \bar{k}.
        \end{align}
        Since $\phi$ is a concave function, we see that for any $k \geq\bar{k}$,
        \begin{align*}
            \left[ \phi(\Psi(x^k) - \zeta) - \phi(\Psi(x^{k+1}) - \zeta) \right] &\cdot \dist(0, \partial \Psi(x^k))\\
            &\geq \phi'(\Psi(x^k) - \zeta) \cdot \dist(0, \partial \Psi(x^k)) \cdot \left( \Psi(x^k) - \Psi(x^{k+1}) \right)\\
            &\geq \Psi(x^k) - \Psi(x^{k+1})\\
            &\geq A_2\|x^{k+1} - x^k\|^2,
        \end{align*}
        where the second inequality holds from~\eqref{ineq:phi-kl-iter} and the fact that $\{\Psi(x^k)\}_{k=0}^{\infty}$ is non-increasing,
        and the last inequality holds from~\eqref{ineq:sufficient-decrease-rho}.
        From the above inequality and~\eqref{ineq:dist-euc}, we obtain
        \begin{align}
            \label{ineq:constant-estimate}
            \|x^{k+1} - x^k\|^2 \leq \frac{A_1}{A_2} \left( \phi(\Psi(x^k) - \zeta) - \phi(\Psi(x^{k+1}) - \zeta) \right) \|x^k - x^{k-1}\|.
        \end{align}
        Taking the square root of~\eqref{ineq:constant-estimate} and using the inequality of the arithmetic and geometric means, we find that
        \begin{align*}
            \|x^{k+1} - x^k \| &\leq \sqrt{\frac{A_1}{A_2} \left( \phi(\Psi(x^k) - \zeta) - \phi(\Psi(x^{k+1}) - \zeta) \right)} \cdot \sqrt{\|x^k - x^{k-1}\|}\\
            &\leq \frac{A_1}{2A_2} \left( \phi(\Psi(x^k) - \zeta) - \phi(\Psi(x^{k+1}) - \zeta) \right) + \frac{1}{2}\|x^k - x^{k-1}\|.
        \end{align*}
        This shows that
        \begin{align}
            \label{ineq:summing}
            \frac{1}{2}\|x^{k+1} - x^k \|\leq\frac{A_1}{2A_2} \left( \phi(\Psi(x^k) - \zeta) - \phi(\Psi(x^{k+1}) - \zeta) \right)
            + \frac{1}{2}\|x^k - x^{k-1}\| - \frac{1}{2}\|x^{k+1} - x^k \|.
        \end{align}
        Summing~\eqref{ineq:summing} from $k=\bar{k}$ to $\infty$, we have
        \begin{align*}
            \sum_{k=\bar{k}}^{\infty}\|x^{k+1} - x^k \|
            \leq \frac{A_1}{A_2} \phi(\Psi(x^{\bar{k}}) - \zeta) + \|x^{\bar{k}} - x^{\bar{k}-1}\| < \infty,
        \end{align*}
        which implies that $\sum_{k=1}^{\infty} \|x^k - x^{k-1}\| < \infty$, \ie,
        the sequence $\{x^k\}_{k=0}^{\infty}$ is a Cauchy sequence.
        Thus, $\{x^k\}_{k=0}^{\infty}$ converges to a limiting critical point of $(\mathcal{P})$ from Theorem~\ref{theorem:global-subsequential-conv-bpdca} (iv).
        Because every limiting critical point is a limiting stationary point from the differentiability of $f_2$, $\{x^k\}_{k=0}^{\infty}$ converges to a limiting stationary point of $(\mathcal{P})$.
        \qed
    \end{proof}

    Next, suppose that Assumption~\ref{ass5g} holds instead of Assumption~\ref{ass5}.
    Here, we can show the global convergence of BPDCA by referring to~\cite[Theorem 3.4]{hoai2018}.
    We will use subanalyticity instead of the KL property in the proof.
    \begin{theorem}[Global convergence of BPDCA under the local differentiability of $g$]
    Suppose that Assumptions~$\ref{ass1}$,~$\ref{ass2}$,~$\ref{ass4}$, and~$\ref{ass5g}$ hold
    and that $\Psi$ is subanalytic.
    Let $\{x^k\}_{k=0}^{\infty}$ be a sequence generated by BPDCA with $0 < \lambda L < 1$ for solving $(\mathcal{P})$.
    Then, the sequence $\{x^k\}_{k=0}^{\infty}$ converges to a limiting critical point of $(\mathcal{P})$;
    moreover, $\sum_{k=1}^{\infty} \|x^k - x^{k-1}\| < \infty$.
    \label{theorem:global-convergence-bpdca-g}
    \end{theorem}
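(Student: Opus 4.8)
The plan is to mirror the proof of Theorem~\ref{theorem:global-convergence-bpdca}, but with the roles of the nonsmooth datum exchanged: since now $g$ is $\mathcal{C}^1$ while $f_2$ may be nonsmooth, I would carry out the whole \L ojasiewicz-type argument on $-\Psi = f_2 - (f_1 + g)$ rather than on $\Psi$, following \cite[Theorem 3.4]{hoai2018}. The crucial gain is that $f_1 + g$ is $\mathcal{C}^1$ on $\real^d$ (Assumptions~\ref{ass1}~(ii),~\ref{ass4}~(i), and~\ref{ass5g}), so the exact sum rule for a convex function plus a smooth one (\cite[Corollary 10.9]{varAna}) yields the \emph{equality} $\partial(-\Psi)(x) = \partial_{\mathrm{c}} f_2(x) - \nabla f_1(x) - \nabla g(x)$, in contrast to the mere inclusion available for $\partial\Psi$. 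This exactness is what lets me exhibit genuine elements of the subdifferential. I would first record that, because $g$ is differentiable, the limiting critical points of $\Psi$ coincide with the limiting stationary points of $-\Psi$, so Theorem~\ref{theorem:global-subsequential-conv-bpdca}~(iv) guarantees that every accumulation point lies in the set $\mathcal{N}_0$ of Assumption~\ref{ass5g}, and Proposition~\ref{prop:zeta} gives $\Psi(x^k)\to\zeta$ with $\Psi\equiv\zeta$ on the compact accumulation set $\Omega$.

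Next I would establish the two one-step estimates needed for the descent argument. The sufficient-decrease inequality $\Psi(x^{k-1}) - \Psi(x^k) \geq A_2\|x^k - x^{k-1}\|^2$ is already in hand from~\eqref{ineq:sufficient-decrease} and the $\sigma$-strong convexity of $h$. For the relative-error bound I would write the first-order optimality condition of~\eqref{subprob:bpdca}; since $\partial g(x^{k+1}) = \{\nabla g(x^{k+1})\}$, this reads $0 = \nabla g(x^{k+1}) + \nabla f_1(x^k) - \xi^k + \tfrac{1}{\lambda}(\nabla h(x^{k+1}) - \nabla h(x^k))$ with $\xi^k \in \partial_{\mathrm{c}} f_2(x^k)$. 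Then $\xi^k - \nabla f_1(x^k) - \nabla g(x^k) \in \partial(-\Psi)(x^k)$, and substituting the optimality relation turns this element into $(\nabla g(x^{k+1}) - \nabla g(x^k)) + \tfrac{1}{\lambda}(\nabla h(x^{k+1}) - \nabla h(x^k))$. Using the boundedness of $\{x^k\}$ together with the local Lipschitz continuity of $\nabla g$ (Assumption~\ref{ass5g}) and of $\nabla h$ (Assumption~\ref{ass4}~(ii)) on a bounded neighborhood of $\Omega$, I would conclude $\dist(0, \partial(-\Psi)(x^k)) \leq A_1\|x^{k+1} - x^k\|$. Here the subgradient at $x^k$ is controlled by the \emph{forward} step, which makes the indices align and avoids the arithmetic-geometric-mean step used in Theorem~\ref{theorem:global-convergence-bpdca}.

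Finally I would run the summability argument on the compact set $\Omega$, treating the finite-termination case ($\Psi(x^k) = \zeta$ for some $k$, which forces a constant tail via sufficient decrease) exactly as before and focusing on the case $\Psi(x^k) > \zeta$ for all $k$. The main obstacle, and the reason subanalyticity replaces the KL property, is a sign issue: along the sequence $-\Psi(x^k)$ \emph{increases} to $-\zeta$, so the iterates approach the limiting value from below and the one-sided inequality of Definition~\ref{def:kl} does not apply to $-\Psi$. I would instead invoke the two-sided \L ojasiewicz gradient inequality for the continuous subanalytic function $-\Psi$ (\cite{Bolte2007}): in a uniformized form over $\Omega$ (the analog of Lemma~\ref{lemma:uniformized-kl}, obtained by a compactness and covering argument), there exist $C > 0$ and $\theta \in [0,1)$ with $|\Psi(x) - \zeta|^{\theta} \leq C\,\dist(0, \partial(-\Psi)(x))$ near $\Omega$. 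Setting $\varphi(s) = s^{1-\theta}$ and combining its concavity with the sufficient-decrease and relative-error estimates yields $\varphi(\Psi(x^k) - \zeta) - \varphi(\Psi(x^{k+1}) - \zeta) \geq D\|x^{k+1} - x^k\|$ for a constant $D > 0$ and all large $k$. Summing telescopes the left-hand side to a finite limit, giving $\sum_{k=1}^{\infty} \|x^k - x^{k-1}\| < \infty$; hence $\{x^k\}_{k=0}^{\infty}$ is a Cauchy sequence and converges, and its limit is a limiting critical point of $(\mathcal{P})$ by Theorem~\ref{theorem:global-subsequential-conv-bpdca}~(iv).
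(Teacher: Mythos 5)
Your proposal is correct and follows essentially the same route as the paper's proof: both pass to $-\Psi$ with the exact formula $\partial(-\Psi)(x^k)=\partial_{\mathrm{c}}f_2(x^k)-\nabla f_1(x^k)-\nabla g(x^k)$, exhibit the forward-indexed subgradient $\nabla(g+\tfrac{1}{\lambda}h)(x^{k+1})-\nabla(g+\tfrac{1}{\lambda}h)(x^k)$ from the optimality condition of~\eqref{subprob:bpdca}, invoke the two-sided \L ojasiewicz inequality of~\cite[Theorem 3.1]{Bolte2007} uniformized over $\Omega$ by a finite covering, and telescope via the concavity of $t^{1-\theta}$ together with~\eqref{ineq:sufficient-decrease}. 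Your added observations --- the explicit finite-termination case and the sign-based explanation of why subanalyticity replaces the one-sided KL inequality for $-\Psi$ --- are sound refinements of points the paper leaves implicit.
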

    \begin{proof}
        Since $g$ is differentiable, $g$ is continuous on $\real^d$.
        Since the convexity of $f_1$ and $f_2$ implies their continuity, $\Psi$ is continuous on $\real^d$.

        Let $\{\xi^k\}_{k=0}^{\infty}$ on $\real^d$ be a sequence of subgradients of $f_2$.
        From Theorem~\ref{theorem:global-subsequential-conv-bpdca} (i) and (ii), $\{x^k\}_{k=0}^{\infty}$ and $\{\xi^k\}_{k=0}^{\infty}$ are bounded.
        Let $\tilde{x}$ be a limiting stationary point of $-\Psi$ and
        $B(\tilde{x}, \epsilon_0)$ be an open ball with center $\tilde{x}$ and radius $\epsilon_0 > 0$.
        Since $\nabla g$ is locally Lipschitz continuous, and Assumption~\ref{ass4} (ii) holds, for $\lambda > 0$, there exist $\kappa_0 > 0$ and $\epsilon_0 > 0$ such that
        \begin{align}
            \label{ineq:g-h}
            \left\|\nabla \left(g + \frac{1}{\lambda}h\right)(u) - \nabla \left(g + \frac{1}{\lambda}h\right)(v)\right\|\leq\kappa_0\|u-v\|,
            \quad \forall u, v \in B(\tilde{x}, \epsilon_0).
        \end{align}
        From Assumption~\ref{ass1} (v), $-\Psi$ is finite.
        In addition,
        by recalling the continuity and subanalyticity of $-\Psi$ on $B(\tilde{x}, \epsilon_0)$,
        we can apply~\cite[Theorem 3.1]{Bolte2007}\nocite{Bolte2007} to the subanalytic function $-\Psi$ and
        obtain $\nu_0 > 0$ and $\theta_0\in[0, 1)$ such that
        \begin{align}
            \label{ineq:psi-nu}
            |\Psi(u) - \zeta|^{\theta_0} \leq \nu_0\|\hat{x}\|,\quad \forall u\in B(\tilde{x}, \epsilon_0),\quad\hat{x}\in\partial(-\Psi)(u),
        \end{align}
        where $\zeta = \Psi(\tilde{x})$.

        Let $\Omega$ be the set of accumulation points of $\{x^k\}_{k=0}^{\infty}$.
        Since $\Omega$ is compact,
        $\Omega$ can be covered by a finite number of $B(\tilde{x}_j, \epsilon_j)$ with $\tilde{x}_j\in\Omega$ and $\epsilon_j>0$, $j=1,\ldots,p$.
        From Theorem~\ref{theorem:global-subsequential-conv-bpdca} (iv), $\tilde{x}_j\in\Omega, j=1,\ldots,p$ are limiting critical points of $(\mathcal{P})$.
        Hence,~\eqref{ineq:g-h} with $\kappa_j>0$ and $\epsilon_j>0$ and~\eqref{ineq:psi-nu} with $\nu_j > 0$ and $\theta_j\in[0, 1)$ hold for $j=1,\ldots,p$.
        Letting $\epsilon > 0$ be a sufficiently small constant,
        we obtain
        \begin{align*}
            \left\{ x\in\real^d\ \middle|\ \dist(x, \Omega) < \epsilon \right\}\subset\bigcup_{j=1}^{p}B(\tilde{x}_j, \epsilon_j).
        \end{align*}
        From~\eqref{eq:convergence-dist-x-omega}, there exists $k_1 > 0$ such that $\dist(x^k, \Omega) < \epsilon$ for any $k \geq k_1$;
        hence, $x^k\in\bigcup_{j=1}^{p}B(\tilde{x}_j, \epsilon_j)$ for any $k \geq k_1$.
        From Theorem~\ref{theorem:global-subsequential-conv-bpdca} (iii),
        letting $\bar{\epsilon} > 0$ be a sufficiently small constant,
        there exists $k_2 > 0$ such that $\|x^k - x^{k+1}\|\leq\frac{\bar{\epsilon}}{2}$ for any $k \geq k_2$.
        Therefore, redefining $\bar{\epsilon}, \epsilon_j, j=1,\ldots,p$ and
        relabeling if necessary, we can assume without loss of generality that
        \begin{align*}
            x^k \in\bigcup_{j=1}^{p}B(\tilde{x}_j, \frac{\epsilon_j}{2}) \quad \mathrm{and} \quad \|x^k - x^{k+1}\|\leq\frac{\bar{\epsilon}}{2},\quad\forall k \geq \bar{k},
        \end{align*}
        where $\bar{\epsilon} = \min_{j=1,\ldots,p}\epsilon_j$ and $\bar{k} = \max\{k_1, k_2\}$,
        which implies $x^k\in B(\tilde{x}_{j_k}, \epsilon_{j_k}/2), j_k \in\{1, \ldots, p\}$ and hence $x^{k+1}\in B(\tilde{x}_{j_k}, \epsilon_{j_k})$.
        Thus, from~\eqref{ineq:g-h} and~\eqref{ineq:psi-nu}, we have
        \begin{align}
            &\left\|\nabla \left(g + \frac{1}{\lambda}h\right)(x^k) - \nabla \left(g + \frac{1}{\lambda}h\right)(x^{k+1})\right\|\leq\kappa\|x^k-x^{k+1}\|,\label{ineq:g-h-xk} \\
            &|\Psi(x^k) - \zeta|^{\theta} \leq \nu \|\hat{x}^k\|,\quad \hat{x}^k\in\partial(-\Psi)(x^k), \quad \forall k \geq \bar{k},\label{ineq:psi-xk}
        \end{align}
        where $\kappa = \max_{j=1,\ldots,p}\kappa_j, \nu = \max_{j=1,\ldots,p}\nu_j$, and $\theta = \max_{j=1,\ldots,p}\theta_j$.
        From~\eqref{subprob:bpdca}, we find that
        \begin{align*}
            0 = \nabla g(x^{k+1}) + \nabla f_1(x^{k}) - \xi^{k} + \frac{1}{\lambda}\left(\nabla h(x^{k + 1}) - \nabla h(x^{k}) \right),
        \end{align*}
        which implies
        \begin{align*}
            \nabla g(x^{k+1}) - \nabla g(x^k) + \frac{1}{\lambda}\left(\nabla h(x^{k + 1}) - \nabla h(x^{k}) \right)
            = \xi^k - \nabla f_1(x^{k}) - \nabla g(x^k)
            \in \partial(-\Psi)(x^k),
        \end{align*}
        where we have used $\partial(-\Psi)(x^k) = \partial_{\mathrm{c}} f_2(x^k) - \nabla f_1(x^k) - \nabla g(x^k)$, which comes from the convexity of $f_2$.
        Using~\eqref{ineq:g-h-xk} and~\eqref{ineq:psi-xk}, we obtain
        \begin{align}
            \label{ineq:psi-nkxk}
            |\Psi(x^k) - \zeta|^{\theta}\leq\nu\left\|\nabla \left(g + \frac{1}{\lambda}h\right)(x^k) - \nabla \left(g + \frac{1}{\lambda}h\right)(x^{k+1})\right\|
            \leq \kappa\nu\|x^k-x^{k+1}\|, \quad \forall k \geq \bar{k}.
        \end{align}
        Since the function $t^{1-\theta}$ is concave on $[0, \infty)$, $\Psi(x^k) - \zeta \geq 0$,
        \eqref{ineq:sufficient-decrease}, and~\eqref{ineq:psi-nkxk},
        we find that, for all $k \geq \bar{k}$,
        \begin{align}
        (\Psi(x^k) - \zeta)^{1-\theta} - (\Psi(x^{k+1}) - \zeta)^{1-\theta}
        &{}\geq (1-\theta)(\Psi(x^k) - \zeta)^{-\theta}(\Psi(x^k) - \Psi(x^{k+1}))\nonumber\\
        &{}\geq \frac{1-\theta}{\kappa\nu\|x^k-x^{k+1}\|}\left( \frac{1}{\lambda} - L \right)\frac{\sigma}{2}\|x^k-x^{k+1}\|^2\nonumber\\
        &{}= \frac{(1-\theta)\sigma}{2\kappa\nu}\left(\frac{1}{\lambda} - L \right)\|x^k-x^{k+1}\|.\label{ineq:psi-zeta-xk}
        \end{align}
        Summing~\eqref{ineq:psi-zeta-xk} from $k=\bar{k}$ to $\infty$ yields
        \begin{align*}
            \sum_{k=\bar{k}}^{\infty}\|x^k-x^{k+1}\|\leq\frac{2\kappa\nu}{(1/\lambda - L)(1-\theta)\sigma}(\Psi(x^{\bar{k}}) - \zeta)^{1-\theta} < \infty,
        \end{align*}
        which implies that $\sum_{k=1}^{\infty} \|x^k - x^{k-1}\| < \infty$, \ie,
        the sequence $\{x^k\}_{k=0}^{\infty}$ is a Cauchy sequence.
        Thus, $\{x^k\}_{k=0}^{\infty}$ converges to a limiting critical point of $(\mathcal{P})$ from Theorem~\ref{theorem:global-subsequential-conv-bpdca} (iv).
        \qed
    \end{proof}

    Finally, we will show the rate of convergence
    in a manner following~\cite{attouch09,pdcae}.

    \begin{theorem}[Rate of convergence under the local differentiability of $f_2$]
    \label{theorem:rate-bpdca}
    Suppose that Assumptions~$\ref{ass1}$,~$\ref{ass2}$,~$\ref{ass4}$, and~$\ref{ass5}$ hold.
    Let $\{x^k\}_{k=0}^{\infty}$ be a sequence generated by BPDCA with $0 < \lambda L < 1$ for solving $(\mathcal{P})$
    and suppose that $\{x^k\}_{k=0}^{\infty}$ converges to some $\tilde{x}\in\mathcal{X}$.
    Suppose further that $\Psi$ is a KL function with $\phi$ in the KL inequality~$\eqref{ineq:kl}$
    taking the form $\phi(s) = cs^{1-\theta}$ for some $\theta \in [0, 1)$ and $c > 0$.
    Then, the following statements hold:
    \begin{enumerate}
        \renewcommand{\labelenumi}{\rm{(\roman{enumi})}}
        \item If $\theta = 0$, then there exists $k_0 > 0$ such that $x^k$ is constant for $k > k_0$;
        \item If $\theta \in (0, \frac{1}{2}]$, then there exist $c_1 > 0, k_1 > 0$, and $\eta\in(0, 1)$
        such that $\|x^k - \tilde{x}\| < c_1\eta^k$ for $k > k_1$;
        \item If $\theta \in (\frac{1}{2}, 1)$, then there exist $c_2 > 0$ and $k_2 > 0$
        such that $\|x^k - \tilde{x}\| < c_{2}k^{-\frac{1-\theta}{2\theta-1}}$ for $k > k_2$.
    \end{enumerate}
    \end{theorem}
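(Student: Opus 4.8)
The plan is to follow the now-standard Attouch--Bolte argument (as in~\cite{attouch09,pdcae}), controlling the tail lengths of the sequence rather than the iterates directly. Since Theorem~\ref{theorem:global-convergence-bpdca} already guarantees that $\{x^k\}$ converges to $\tilde{x}$ and that $\sum_{k}\|x^k-x^{k-1}\|<\infty$, I would introduce the tail sum $\Delta_k := \sum_{i=k}^{\infty}\|x^{i+1}-x^i\|$, which is finite and satisfies $\|x^k-\tilde{x}\|\le\Delta_k$ by the triangle inequality; hence it suffices to establish the claimed decay rates for $\Delta_k$. I would also write $r_k := \Psi(x^k)-\zeta\ge 0$, where $\zeta=\lim_k\Psi(x^k)$, and recall the two workhorses already available: the relative-error bound~\eqref{ineq:dist-euc}, $\dist(0,\partial\Psi(x^k))\le A_1\|x^k-x^{k-1}\|$, and the sufficient-decrease bound~\eqref{ineq:sufficient-decrease-rho}.

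First I would dispose of the case $\theta=0$. Here $\phi(s)=cs$, so $\phi'\equiv c$, and the KL inequality~\eqref{ineq:kl} forces $\dist(0,\partial\Psi(x^k))\ge 1/c$ for all large $k$ with $\Psi(x^k)>\zeta$; combined with~\eqref{ineq:dist-euc} this gives $\|x^k-x^{k-1}\|\ge 1/(cA_1)>0$, contradicting $\|x^k-x^{k-1}\|\to 0$ unless $\Psi(x^k)=\zeta$. Hence $x^k$ must be eventually constant, proving (i). For $\theta\in(0,1)$, plugging $\phi'(s)=c(1-\theta)s^{-\theta}$ into~\eqref{ineq:kl} and using~\eqref{ineq:dist-euc} yields the key estimate $(r_k)^{\theta}\le c(1-\theta)A_1\,\|x^k-x^{k-1}\|$, so that $\phi(r_k)=c(r_k)^{1-\theta}\le C\,\|x^k-x^{k-1}\|^{(1-\theta)/\theta}$ for a constant $C>0$.

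Next I would convert the telescoping inequality~\eqref{ineq:summing} into a recursion for $\Delta_k$. Summing~\eqref{ineq:summing} from index $k$ to $\infty$ and using $\phi(r_N)\to 0$ and $\|x^{N+1}-x^N\|\to 0$ produces $\Delta_k\le \tfrac{A_1}{A_2}\phi(r_k)+\|x^k-x^{k-1}\|=\tfrac{A_1}{A_2}\phi(r_k)+(\Delta_{k-1}-\Delta_k)$. Substituting the bound on $\phi(r_k)$ and writing $\|x^k-x^{k-1}\|=\Delta_{k-1}-\Delta_k$ gives a self-contained recursion $\Delta_k\le C'(\Delta_{k-1}-\Delta_k)^{(1-\theta)/\theta}+(\Delta_{k-1}-\Delta_k)$. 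When $\theta\in(0,\tfrac12]$ the exponent $(1-\theta)/\theta\ge 1$, so since $\Delta_{k-1}-\Delta_k\to0$ the power term is eventually dominated by the linear term, giving $\Delta_k\le M(\Delta_{k-1}-\Delta_k)$ and hence $\Delta_k\le\tfrac{M}{M+1}\Delta_{k-1}$, the geometric rate furnishing the $\eta^k$ bound of (ii).

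The main obstacle is the remaining regime $\theta\in(\tfrac12,1)$, where $(1-\theta)/\theta<1$, the power term dominates, and only a sublinear rate survives. Here I would reduce to the scalar lemma on monotone sequences: if $\Delta_k\ge 0$ is nonincreasing and satisfies $\Delta_k^{\alpha}\le \tilde{C}(\Delta_{k-1}-\Delta_k)$ with $\alpha>1$, then $\Delta_k=O\!\big(k^{-1/(\alpha-1)}\big)$. Raising the recursion to the power $\theta/(1-\theta)$ produces exactly such an inequality with $\alpha=\theta/(1-\theta)>1$, and since $1/(\alpha-1)=(1-\theta)/(2\theta-1)$ this yields $\|x^k-\tilde{x}\|\le\Delta_k\le c_2\,k^{-(1-\theta)/(2\theta-1)}$, which is (iii). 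The delicate point is the scalar lemma itself---comparing consecutive increments of $\Delta_k^{-(\alpha-1)}$ against a fixed positive constant and telescoping---which carries essentially all the analytic content of the sublinear case; everything else reduces to bookkeeping of the constants $A_1,A_2,C,C'$.
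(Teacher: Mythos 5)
Your proposal is correct and follows essentially the same route as the paper's proof: the paper likewise sets $R_k=\Psi(x^k)-\zeta$ and the tail sum $S_k=\sum_{j\geq k}\|x^{j+1}-x^j\|$, combines~\eqref{ineq:summing},~\eqref{ineq:dist-euc}, and the KL inequality into the recursion $S_k \leq A_3(S_{k-1}-S_k)^{\frac{1-\theta}{\theta}}+(S_{k-1}-S_k)$, and treats the three regimes of $\theta$ exactly as you do (including the contradiction via $\dist(0,\partial\Psi(x^k))\geq 1/c$ when $\theta=0$ and the geometric bound $S_k\leq\frac{A_3+1}{A_3+2}S_{k-1}$ when $\theta\in(0,\frac{1}{2}]$). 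The only difference is cosmetic: for $\theta\in(\frac{1}{2},1)$ the paper simply cites~\cite[Theorem 2]{attouch09} for the scalar lemma $S_k^{\frac{\theta}{1-\theta}}\leq A_4(S_{k-1}-S_k)\Rightarrow S_k=O\bigl(k^{-\frac{1-\theta}{2\theta-1}}\bigr)$, which you propose to reprove directly by telescoping $\Delta_k^{-(\alpha-1)}$.
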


    \begin{proof}
    (i)
        For the case of $\theta = 0$, we will prove that there exists an integer $k_0 > 0$ such that $\Psi(x^{k_0}) = \zeta$
        by assuming to the contrary that $\Psi(x^k) > \zeta$ for all $k > 0$ and showing a contradiction.
        The sequence $\{\Psi(x^k)\}_{k=0}^{\infty}$ converges to $\zeta$ due to Proposition~\ref{prop:zeta} (i).
        In addition, from the KL inequality~\eqref{ineq:phi-kl-iter} and $\phi'(\cdot) = c$,
        we can see that for all sufficiently large $k$,
        \begin{align*}
            \dist(0, \partial\Psi(x^k)) \geq \frac{1}{c},
        \end{align*}
        which contradicts Theorem~\ref{theorem:global-convergence-bpdca} (i).
        Therefore, there exists $k_0 > 0$ such that $\Psi(x^{k_0}) = \zeta$.
        Since $\{\Psi(x^k)\}_{k=0}^{\infty}$ is non-increasing and converges to $\zeta$,
        we have $\Psi(x^{k_0+\bar{k}}) = \zeta$ for all $\bar{k} \geq 0$.
        This, together with~\eqref{ineq:sufficient-decrease-rho},
        lead us to conclude that there exists $k_0 > 0$ such that $x^k$ is constant for $k > k_0$.

        (ii--iii)
        Next, consider the case $\theta\in(0, 1)$.
        If there exists $k_0 > 0$ such that $\Psi(x^{k_0}) = \zeta$,
        then we can show that the sequence $\{x^k\}_{k=0}^{\infty}$ is finitely convergent in the same way as in the proof of (i).
        Therefore, for $\theta\in(0, 1)$, we only need to consider the case that $\Psi(x^k) > \zeta$ for all $k > 0$.

        Define $R_k = \Psi(x^k) - \zeta$ and $S_k = \sum_{j=k}^{\infty}\|x^{j+1} - x^j\|$, where $S_k$ is well-defined
        due to Theorem~\ref{theorem:global-convergence-bpdca} (ii).
        From~\eqref{ineq:summing}, for any $k \geq \bar{k}$, where $\bar{k}$ is defined in~\eqref{ineq:phi-kl-iter},
        we obtain
        \begin{align}
            S_k &= 2\sum_{j=k}^{\infty}\frac{1}{2}\|x^{j+1}-x^j\|\nonumber\\
            &\leq2\sum_{j=k}^{\infty}\left[ \frac{A_1}{2A_2} \left( \phi(\Psi(x^j) - \zeta) - \phi(\Psi(x^{j+1}) - \zeta) \right)
            + \frac{1}{2}\|x^j - x^{j-1}\| - \frac{1}{2}\|x^{j+1} - x^j \|\right]\nonumber\\
            &\leq\frac{A_1}{A_2}\phi(\Psi(x^k) - \zeta) + \|x^k - x^{k-1}\|\nonumber\\
            &= \frac{A_1}{A_2}\phi(R_k) + S_{k-1} - S_k. \label{ineq:norm-seq}
        \end{align}
        On the other hand, since $\lim_{k\to\infty}x^k = \tilde{x}$ and $\{\Psi(x^k)\}$ is non-increasing and converges to $\zeta$,
        the KL inequality~\eqref{ineq:phi-kl-iter} with $\phi(s) = cs^{1-\theta}$ ensures that, for all sufficiently large $k$,
        \begin{align}
            \label{ineq:phi-kl-psi}
            c(1-\theta)R_k^{-\theta}\dist(0, \partial\Psi(x^k))\geq1.
        \end{align}
        From the definition of $S_k$ and~\eqref{ineq:dist-euc}, we also have that, for all sufficiently large $k$,
        \begin{align}
            \label{ineq:dist-partial-norm-seq}
            \dist(0, \partial\Psi(x^k)) \leq A_1(S_{k-1} - S_k).
        \end{align}
        Combining~\eqref{ineq:phi-kl-psi} and~\eqref{ineq:dist-partial-norm-seq},
        we have $R_k^{\theta} \leq A_1 \cdot c(1-\theta) (S_{k-1} - S_k)$ for all sufficiently large $k$.
        Raising the above inequality to the power of $\frac{1-\theta}{\theta}$ and scaling both sides by $c$,
        we find that $cR_k^{1-\theta} \leq c(A_1 \cdot c(1-\theta) (S_{k-1} - S_k))^{\frac{1-\theta}{\theta}}$.
        Combining this with~\eqref{ineq:norm-seq} and recalling $\phi(R_k) = cR_k^{1-\theta}$, we find that,
        for all sufficiently large $k$,
        \begin{align}
            \label{ineq:sk-2}
            S_k \leq A_3 (S_{k-1} - S_k)^{\frac{1-\theta}{\theta}} + S_{k-1} - S_k,
        \end{align}
        where $A_3 = \frac{A_1}{A_2}c(A_1 \cdot c(1-\theta))^{\frac{1-\theta}{\theta}}$.

        (ii)
        When $\theta\in(0,\frac{1}{2}]$, we have $\frac{1-\theta}{\theta}\geq1$.
        Since $\lim_{k\to\infty}\|x^{k+1} - x^k\| = 0$ by Theorem~\ref{theorem:global-subsequential-conv-bpdca} (iii),
        $\lim_{k\to\infty}S_{k-1} - S_k = 0$.
        From these considerations and~\eqref{ineq:sk-2}, we conclude that there exists $k_1 > 0$ such that for all $k \geq k_1$,
        $S_k \leq (A_3 + 1)(S_{k-1} - S_k)$,
        which implies $S_k \leq \frac{A_3 + 1}{A_3 + 2} S_{k-1}$.
        Therefore, for all $k \geq k_1$,
        \begin{align*}
            \|x^k - \tilde{x}\| \leq \sum_{j=k}^{\infty}\|x^{j+1} - x^j\| = S_k \leq S_{k_1-1}\left( \frac{A_3 + 1}{A_3 + 2} \right)^{k - k_1 + 1}.
        \end{align*}

        (iii)
        For $\theta\in\left(\frac{1}{2}, 1\right)$, $\frac{1-\theta}{\theta} < 1$.
        From~\eqref{ineq:sk-2} and $\lim_{k\to\infty}S_{k-1} - S_k = 0$,
        there exists $k_2 > 0$ such that
        \begin{align*}
            S_k \leq A_3 (S_{k-1} - S_k)^{\frac{1-\theta}{\theta}} + S_{k-1} - S_k
            &\leq A_3 (S_{k-1} - S_k)^{\frac{1-\theta}{\theta}} + (S_{k-1} - S_k)^{\frac{1-\theta}{\theta}}\\
            &\leq (A_3 + 1)(S_{k-1} - S_k)^{\frac{1-\theta}{\theta}},
        \end{align*}
        for all $k \geq k_2$.
        Raising the above inequality to the power of $\frac{\theta}{1-\theta}$, for any $k \geq k_2$ we find that
        $S_k^{\frac{\theta}{1-\theta}} \leq A_4(S_{k-1} - S_k)$,
        where $A_4 = (A_3 + 1)^{\frac{\theta}{1-\theta}}$.
        From~\cite[Theorem 2]{attouch09}, we find that, for all sufficiently large $k$,
        there exists $A_5 > 0$ such that $S_k \leq A_5 k^{-\frac{1 - \theta}{2\theta - 1}}$.
        \qed
    \end{proof}
    Using Theorem~\ref{theorem:global-convergence-bpdca-g},
    we can obtain another rate of convergence
    in the same way as in the proof of~\cite[Theorem 2]{attouch09} or~\cite[Theorem 3.5]{hoai2018}.
    \begin{theorem}[Rate of convergence under the local differentiability of $g$]
    \label{theorem:rate-bpdca-g}
    Suppose that Assumptions~$\ref{ass1}$,~$\ref{ass2}$,~$\ref{ass4}$, and~$\ref{ass5g}$ hold.
    Let $\{x^k\}_{k=0}^{\infty}$ be a sequence generated by BPDCA with $0 < \lambda L < 1$ for solving $(\mathcal{P})$
    and suppose that $\{x^k\}_{k=0}^{\infty}$ converges to some $\tilde{x}\in\mathcal{X}$.
    Suppose further that $\Psi$ is subanalytic.
    Let $\theta \in [0, 1)$ be a \L ojasiewicz exponent of $\tilde{x}$.
    Then, the following statements hold:
    \begin{enumerate}
        \renewcommand{\labelenumi}{\rm{(\roman{enumi})}}
        \item If $\theta = 0$, then there exists $k_0 > 0$ such that $x^k$ is constant for $k > k_0$;
        \item If $\theta \in (0, \frac{1}{2}]$, then there exist $c_1 > 0, k_1 > 0$, and $\eta\in(0, 1)$
        such that $\|x^k - \tilde{x}\| < c_1\eta^k$ for $k > k_1$;
        \item If $\theta \in (\frac{1}{2}, 1)$, then there exist $c_2 > 0$ and $k_2 > 0$
        such that $\|x^k - \tilde{x}\| < c_{2}k^{-\frac{1-\theta}{2\theta-1}}$ for $k > k_2$.
    \end{enumerate}
    \end{theorem}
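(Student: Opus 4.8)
The plan is to follow the proof of Theorem~\ref{theorem:rate-bpdca} almost verbatim, substituting the Kurdyka-\L ojasiewicz machinery with the subanalytic (\L ojasiewicz) estimates already derived in the proof of Theorem~\ref{theorem:global-convergence-bpdca-g}. Since $\{x^k\}_{k=0}^{\infty}$ converges to $\tilde{x}$, its set of accumulation points is the singleton $\{\tilde{x}\}$, so the \L ojasiewicz inequality~\eqref{ineq:psi-nkxk}, $|\Psi(x^k) - \zeta|^{\theta}\leq\kappa\nu\|x^k - x^{k+1}\|$, holds for all large $k$ with $\theta$ equal to the prescribed \L ojasiewicz exponent at $\tilde{x}$. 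Throughout, I would set $R_k = \Psi(x^k) - \zeta \geq 0$ and $S_k = \sum_{j=k}^{\infty}\|x^{j+1} - x^j\|$, which is finite by Theorem~\ref{theorem:global-convergence-bpdca-g}, and record the elementary facts $\|x^k - \tilde{x}\|\leq S_k$ and $S_k - S_{k+1} = \|x^{k+1} - x^k\|$.

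First I would dispose of the case $\theta = 0$ by contradiction, exactly as in Theorem~\ref{theorem:rate-bpdca}~(i): if $R_k > 0$ for all $k$, then~\eqref{ineq:psi-nkxk} forces $1 \leq \kappa\nu\|x^k - x^{k+1}\|$ for all large $k$, contradicting $\|x^{k+1} - x^k\|\to 0$ from Theorem~\ref{theorem:global-subsequential-conv-bpdca}~(iii); hence $R_{k_0} = 0$ for some $k_0$, and since $\{\Psi(x^k)\}_{k=0}^{\infty}$ is non-increasing with limit $\zeta$ we get $R_k = 0$ for all $k \geq k_0$, whereupon the sufficient-decrease inequality~\eqref{ineq:sufficient-decrease} yields $x^{k+1} = x^k$ for $k \geq k_0$. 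For $\theta\in(0,1)$ I may therefore assume $R_k > 0$ for all $k$, since otherwise the same argument gives finite convergence.

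The core step is a one-step recursion for $S_k$. Summing the already-established inequality~\eqref{ineq:psi-zeta-xk} from $j = k$ to $\infty$ and using $R_j\to 0$ telescopes to $S_k \leq C R_k^{1-\theta}$ for some $C > 0$ and all large $k$; combining this with $R_k^{\theta}\leq\kappa\nu(S_k - S_{k+1})$ from~\eqref{ineq:psi-nkxk} gives the analog of~\eqref{ineq:sk-2}, namely $S_k \leq A\,(S_k - S_{k+1})^{(1-\theta)/\theta}$ for a constant $A > 0$ and all large $k$. When $\theta\in(0,\frac{1}{2}]$ the exponent $(1-\theta)/\theta \geq 1$ and $S_k - S_{k+1}\to 0$, so the recursion reduces to $S_{k+1}\leq q S_k$ with $q\in(0,1)$, yielding the geometric bound $\|x^k - \tilde{x}\|\leq S_k \leq c_1\eta^k$. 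When $\theta\in(\frac{1}{2}, 1)$ we have $(1-\theta)/\theta < 1$; raising the recursion to the power $\theta/(1-\theta)$ produces $S_k^{\theta/(1-\theta)}\leq A^{\theta/(1-\theta)}(S_k - S_{k+1})$, and~\cite[Theorem 2]{attouch09} then gives $S_k \leq c_2 k^{-\frac{1-\theta}{2\theta-1}}$, whence $\|x^k - \tilde{x}\|\leq S_k$ delivers~(iii). The main obstacle is not conceptual but bookkeeping: verifying that the constant $q$ in the linear case is genuinely below $1$ and that the hypotheses of~\cite[Theorem 2]{attouch09} are met in the sublinear case, together with checking that the exponent $\theta$ produced by~\cite[Theorem 3.1]{Bolte2007} in~\eqref{ineq:psi-nkxk} coincides with the prescribed \L ojasiewicz exponent of $\tilde{x}$ --- which is immediate here because $\Omega = \{\tilde{x}\}$.
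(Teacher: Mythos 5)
Your proposal is correct and matches the paper's intended argument: the paper itself gives no explicit proof, deferring to \cite[Theorem 2]{attouch09} and \cite[Theorem 3.5]{hoai2018} applied with the estimates~\eqref{ineq:psi-nkxk} and~\eqref{ineq:psi-zeta-xk} from the proof of Theorem~\ref{theorem:global-convergence-bpdca-g}, which is precisely what you do (telescoping~\eqref{ineq:psi-zeta-xk} to get $S_k \leq C R_k^{1-\theta}$ and combining with~\eqref{ineq:psi-nkxk} to obtain the recursion in $S_k$). The details you flag as bookkeeping do go through, including the reduction $\theta = 0$ and the finite-convergence dichotomy when some $R_{k_0} = 0$.
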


    \subsection{Properties of BPDCAe}\label{subsec:properties-of-bpdcae}
    Inspired by~\cite{bpge}, we introduce the auxiliary function,
    \begin{align*}
        H_M(x, y) = \Psi(x) + MD_h(y, x),\quad M>0.
    \end{align*}
    To show the decreasing property of $H_M$, instead of $\Psi$,
    with respect to $\{x^k\}_{k=0}^{\infty}$,
    we further assume the convexity of $g$.
    \begin{assumption}
        \label{ass:g-convex}
        The function $g$ is convex.
    \end{assumption}
    Under the adaptive restart scheme (see~\eqref{adaptive-restart}),
    we show the decreasing property of $H_M$.
    \begin{lemma}
        \label{lemma:obj-decrease-ex}
        Suppose that Assumptions~$\ref{ass1}$,~$\ref{ass2-ex}$, and~$\ref{ass:g-convex}$ hold.
        For any $x^k, y^k \in C = \interior\dom h$ and any $x^{k+1} \in C =\interior\dom h$ defined by
        \begin{align}
            \label{eq:conv-update-ex}
            x^{k+1} \in \argmin_{y \in \real^d} \left\{ g(y) + \langle \nabla f_1(y^k) - \xi^k, y - y^k \rangle + \frac{1}{\lambda}D_h(y, y^k) \right\},
        \end{align}
        where $\xi^k \in \partial_{\mathrm{c}} f_2(x^k)$, $y^k = x^k + \beta_k(x^k - x^{k-1}), \lambda > 0$, and $\{\beta_k\}_{k=0}^{\infty}\subset [0,1)$,
        it holds that
        \begin{align}
            \label{ineq:obj-decrease-ex}
            \lambda \Psi(x^{k+1}) \leq \lambda \Psi(x^k) + D_h(x^k, y^k) - D_h(x^k, x^{k+1}) - (1 - \lambda L) D_h(x^{k+1}, y^k).
        \end{align}
        Furthermore, when $0 < \lambda L < 1$ and $\{\beta_k\}_{k=0}^\infty$ is given by the adaptive restart scheme~\eqref{adaptive-restart},
        \begin{align}
            \label{ineq:aux-decrease-ex}
            H_M(x^{k+1}, x^{k})
            \leq{}& H_M(x^k, x^{k-1})
            - \left(\frac{1}{\lambda} - M \right)D_h(x^k, x^{k+1})\nonumber\\
            &- \left( M - \frac{\rho}{\lambda} \right) D_h(x^{k-1}, x^k)
            - \left(\frac{1}{\lambda}-L\right)D_h(x^{k+1},y^k).
        \end{align}
        In addition, when $\frac{\rho}{\lambda} \leq M \leq \frac{1}{\lambda}$ for $\rho \in [0, 1)$,
        the auxiliary function $H_M$ is ensured to be non-increasing.
    \end{lemma}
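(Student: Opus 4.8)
The plan is to establish \eqref{ineq:obj-decrease-ex} first, then obtain \eqref{ineq:aux-decrease-ex} by adding the Bregman penalty to both sides and controlling $D_h(x^k, y^k)$ through the restart rule, and finally read off the non-increasing property by inspecting the signs of the resulting coefficients.

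For \eqref{ineq:obj-decrease-ex}, note that Assumption~\ref{ass:g-convex} renders the subproblem objective convex, so $x^{k+1}$ is its global minimizer and its first-order optimality condition supplies $s \in \partial g(x^{k+1})$ with $s = -\nabla f_1(y^k) + \xi^k - \tfrac{1}{\lambda}(\nabla h(x^{k+1}) - \nabla h(y^k))$. Substituting this $s$ into the subgradient inequality for $g$ tested at $u = x^k$ bounds $g(x^{k+1})$ above by a quantity containing $\tfrac{1}{\lambda}\langle \nabla h(x^{k+1}) - \nabla h(y^k), x^k - x^{k+1}\rangle$. The crucial move is to rewrite this inner product via the three-point identity \eqref{eq:three-point} with the assignment $x = x^k$, $y = x^{k+1}$, $z = y^k$, turning it into $\tfrac{1}{\lambda}\bigl[D_h(x^k, y^k) - D_h(x^k, x^{k+1}) - D_h(x^{k+1}, y^k)\bigr]$, which is exactly the Bregman combination appearing in the target.

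Next I would bound $f_1(x^{k+1})$ by combining the Full Extended Descent Lemma (Lemma~\ref{lemma:descent-lemma}) with the convexity inequality $f_1(y^k) \leq f_1(x^k) - \langle \nabla f_1(y^k), x^k - y^k\rangle$; the $f_1(y^k)$ terms cancel and the two gradient contributions combine into $\langle \nabla f_1(y^k), x^{k+1} - x^k\rangle$, giving $f_1(x^{k+1}) \leq f_1(x^k) + \langle \nabla f_1(y^k), x^{k+1} - x^k\rangle + L D_h(x^{k+1}, y^k)$. For $f_2$ I would use convexity and $\xi^k \in \partial_{\mathrm{c}} f_2(x^k)$ to obtain $-f_2(x^{k+1}) \leq -f_2(x^k) - \langle \xi^k, x^{k+1} - x^k\rangle$. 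Summing the three bounds for $\Psi = f_1 - f_2 + g$, the first-order terms in $\nabla f_1(y^k)$ and $\xi^k$ cancel exactly; scaling by $\lambda$ and merging the two $D_h(x^{k+1}, y^k)$ coefficients into $-(1 - \lambda L)$ yields \eqref{ineq:obj-decrease-ex}. I expect this cancellation bookkeeping to be the main care-point, since it hinges on the specific pairing of the linearization anchor $y^k$ in $f_1$ and $g$ against the subgradient anchor $x^k$ in $f_2$.

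To obtain \eqref{ineq:aux-decrease-ex}, I would divide \eqref{ineq:obj-decrease-ex} by $\lambda$ and add $M D_h(x^k, x^{k+1})$ to both sides so the left side becomes $H_M(x^{k+1}, x^k)$ and the coefficient of $D_h(x^k, x^{k+1})$ becomes $-(\tfrac{1}{\lambda} - M)$. The adaptive restart rule \eqref{adaptive-restart} guarantees $D_h(x^k, y^k) \leq \rho D_h(x^{k-1}, x^k)$ at every iteration — trivially when a restart forces $\beta_k = 0$, since then $y^k = x^k$ and $D_h(x^k, y^k) = 0$ — so I replace $\tfrac{1}{\lambda} D_h(x^k, y^k)$ by $\tfrac{\rho}{\lambda} D_h(x^{k-1}, x^k)$ and split off $M D_h(x^{k-1}, x^k)$ to reconstitute $H_M(x^k, x^{k-1})$ with residual coefficient $-(M - \tfrac{\rho}{\lambda})$. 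For the final non-increasing claim I would observe that all three Bregman terms are nonnegative by convexity of $h$, and that $\tfrac{\rho}{\lambda} \leq M \leq \tfrac{1}{\lambda}$ together with $0 < \lambda L < 1$ makes each coefficient $\tfrac{1}{\lambda} - M$, $M - \tfrac{\rho}{\lambda}$, and $\tfrac{1}{\lambda} - L$ nonnegative, whence $H_M(x^{k+1}, x^k) \leq H_M(x^k, x^{k-1})$.
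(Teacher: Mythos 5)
Your proposal is correct and follows essentially the same route as the paper's proof: first-order optimality of the subproblem plus convexity of $g$, the three-point identity \eqref{eq:three-point} with the same assignment $(x,y,z)=(x^k,x^{k+1},y^k)$, the descent lemma combined with convexity of $f_1$ anchored at $y^k$, the subgradient inequality for $f_2$ anchored at $x^k$, and then the restart bound $D_h(x^k,y^k)\leq\rho D_h(x^{k-1},x^k)$ (trivially when $\beta_k=0$) to assemble $H_M$ and read off the sign conditions. The only differences are cosmetic rearrangements of the $f_1$ estimate and of how the $H_M$ terms are regrouped.
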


    \begin{proof}
        From the first-order optimality condition for~\eqref{eq:conv-update-ex}, we obtain
        \begin{align*}
            0 \in \partial_{\mathrm{c}} g(x^{k+1}) + \nabla f_1(y^k) - \xi^k + \frac{1}{\lambda} (\nabla h(x^{k+1}) - \nabla h(y^k)).
        \end{align*}
        From the convexity of $g$, we find that
        \begin{align*}
            g(x^k) - g(x^{k+1}) \geq \langle -\nabla f_1(y^k) + \xi^k - \frac{1}{\lambda} (\nabla h(x^{k+1}) - \nabla h(y^k)), x^k - x^{k+1} \rangle.
        \end{align*}
        Using the three-point identity~\eqref{eq:three-point} of the Bregman distances,
        \begin{align*}
            \frac{1}{\lambda}\langle \nabla h(x^{k+1}) - \nabla h(y^k), x^k - x^{k+1} \rangle = \frac{1}{\lambda} (D_h(x^k, y^k) - D_h(x^k, x^{k+1}) - D_h(x^{k+1}, y^k)),
        \end{align*}
        we have
        \begin{align*}
            g(x^k) - g(x^{k+1}) + f_1(x^k) - f_1(x^{k+1})
            \geq{}& f_1(x^k) - f_1(x^{k+1}) + \langle - \nabla f_1(y^k) + \xi^k, x^k - x^{k+1} \rangle \\
            &-\frac{1}{\lambda} \left(D_h(x^k, y^k) - D_h(x^k, x^{k+1}) - D_h(x^{k+1}, y^k)\right).
        \end{align*}
        From the convexity of $f_1$ and Lemma~\ref{lemma:descent-lemma}, we find that
        \begin{align*}
            &f_1(x^k) - f_1(x^{k+1}) - \langle \nabla f_1(y^k), x^k - x^{k+1} \rangle\\
            ={} &f_1(x^k) - f_1(y^k) - \langle \nabla f_1(y^k), x^k - y^k \rangle  - f_1(x^{k+1}) + f_1(y^k) + \langle \nabla f_1(y^k), x^{k+1} - y^k \rangle\\
            \geq{}&\!-LD_h(x^{k+1}, y^k).
        \end{align*}
        The above inequalities and the definition of the subgradient for $f_2$ lead us to
        \begin{align*}
            \Psi(x^{k+1}) \leq \Psi(x^k) + \frac{1}{\lambda}D_h(x^k, y^k) - \frac{1}{\lambda}D_h(x^k, x^{k+1}) - \left(\frac{1}{\lambda} -L\right) D_h(x^{k+1}, y^k),
        \end{align*}
        which implies inequality~\eqref{ineq:obj-decrease-ex}.
        If $\beta_k = 0$, then $y^k = x^k$ and $D_h(x^k, y^k) = 0$.
        If $\beta_k \neq 0$, since we chose the adaptive restart scheme, there is a $\rho \in [0, 1)$ satisfying $D_h(x^k, y^k) \leq \rho D_h(x^{k-1}, x^k)$.
        From the definition of $H_M(x^k, x^{k-1})$ and $0 < \lambda L < 1$, we have
        \begin{align}
            H_M(x^{k+1}, x^{k})
            \leq{}& H_M(x^k, x^{k-1})
            + \frac{1}{\lambda}D_h(x^k, y^k)
            - \left(\frac{1}{\lambda} - M \right)D_h(x^k, x^{k+1})\nonumber\\
            &- M D_h(x^{k-1}, x^k)
            - \left(\frac{1}{\lambda}-L\right)D_h(x^{k+1},y^k)\nonumber\\
            \leq{}& H_M(x^k, x^{k-1})
            - \left(\frac{1}{\lambda} - M \right)D_h(x^k, x^{k+1})\nonumber\\
            &- \left( M - \frac{\rho}{\lambda} \right) D_h(x^{k-1}, x^k)
            - \left(\frac{1}{\lambda}-L\right)D_h(x^{k+1},y^k),\label{ineq:aux-decrease}
        \end{align}
        where the second inequality comes from~$D_h(x^k, y^k) \leq \rho D_h(x^{k-1}, x^k)$.
        When $\frac{\rho}{\lambda} \leq M \leq \frac{1}{\lambda}$, we have
        \begin{align*}
            H_M(x^{k+1}, x^{k}) \leq H_M(x^k, x^{k-1}), \quad \forall k \geq 0,
        \end{align*}
        which shows that the sequence $\{H_M\}_{k=0}^{\infty}$ is non-increasing.
        \qed
    \end{proof}

    We can use Lemma~\ref{lemma:obj-decrease-ex} to prove Proposition~\ref{prop:bpdcae-property}.

    \begin{proposition}
        \label{prop:bpdcae-property}
        Suppose that Assumptions~$\ref{ass1}$,~$\ref{ass2-ex}$, and~$\ref{ass:g-convex}$ hold.
        Let $\{x^k\}_{k=0}^{\infty}$ be a sequence generated by BPDCAe with $0 < \lambda L < 1$.
        Assume that the auxiliary function $H_M(x^k, x^{k-1})$ satisfies $\frac{\rho}{\lambda} \leq M \leq \frac{1}{\lambda}$ for
        $\rho \in [0, 1)$.
        Then, the following statements hold:
        \begin{enumerate}
            \renewcommand{\labelenumi}{\rm{(\roman{enumi})}}
            \item The sequence $\{H_M(x^k, x^{k-1})\}_{k=0}^{\infty}$ is non-increasing.
            \item $\sum_{k=1}^{\infty}D_h(x^{k-1}, x^{k}) < \infty$; hence, the sequence $\{D_h(x^{k-1}, x^{k})\}_{k=0}^{\infty}$ converges to zero.
            \item $\min_{1\leq k \leq n}D_h(x^{k-1}, x^{k}) \leq \frac{\lambda}{n(1-\rho)}\left( \Psi(x^0) - \Psi_* \right)$,
            where $\Psi_* = v(\mathcal{P}) > -\infty$~\rm{(by Assumption~\ref{ass1} (v))}.
        \end{enumerate}
    \end{proposition}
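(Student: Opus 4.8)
The plan is to follow the same three-step pattern as Proposition~\ref{prop:bpdca-property}, using the one-step estimate of Lemma~\ref{lemma:obj-decrease-ex} in place of Lemma~\ref{lemma:obj-decrease}. Statement (i) requires no work: under $\frac{\rho}{\lambda}\leq M\leq\frac{1}{\lambda}$ the concluding inequality of Lemma~\ref{lemma:obj-decrease-ex} already gives $H_M(x^{k+1},x^k)\leq H_M(x^k,x^{k-1})$ for every $k\geq0$, which is precisely the asserted non-increase.

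For (ii) and (iii) I would not telescope $H_M$ (whose distance coefficients may degenerate when $M=\rho/\lambda$), but instead return to the finer inequality~\eqref{ineq:obj-decrease-ex}. Abbreviating $a_k:=D_h(x^{k-1},x^k)\geq0$, I substitute the adaptive-restart bound $D_h(x^k,y^k)\leq\rho\,a_k$ supplied by~\eqref{adaptive-restart} and drop the nonnegative term $(1-\lambda L)D_h(x^{k+1},y^k)$ (legitimate since $0<\lambda L<1$) to reach the recursion
\[
\lambda\Psi(x^{k+1})\leq\lambda\Psi(x^k)+\rho\,a_k-a_{k+1},\qquad k\geq0.
\]
The base case $k=0$ is consistent because the initialization $x^0=x^{-1}$ forces $\beta_0=0$, hence $y^0=x^0$ and $a_0=D_h(x^{-1},x^0)=0$.

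Summing this recursion telescopically from $k=0$ to $n-1$ collapses the $\Psi$-terms and regroups the distances as $\rho\sum_{k=1}^{n-1}a_k-\sum_{k=1}^{n}a_k=-(1-\rho)\sum_{k=1}^{n-1}a_k-a_n$, so that, after using $a_n\geq0$ and the lower bound $\Psi(x^n)\geq v(\mathcal{P})=\Psi_*$ from Assumption~\ref{ass1} (v),
\[
(1-\rho)\sum_{k=1}^{n}a_k\leq\lambda\bigl(\Psi(x^0)-\Psi_*\bigr).
\]
Letting $n\to\infty$ yields $\sum_{k=1}^{\infty}D_h(x^{k-1},x^k)<\infty$ and hence $D_h(x^{k-1},x^k)\to0$, proving (ii); dividing by $n$ and using $\min_{1\leq k\leq n}a_k\leq\frac{1}{n}\sum_{k=1}^{n}a_k$ gives the bound $\frac{\lambda}{n(1-\rho)}(\Psi(x^0)-\Psi_*)$ of (iii).

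I expect the only genuine obstacle to be the bookkeeping that produces the factor $(1-\rho)$ instead of the $(1-\lambda L)$ appearing in the non-extrapolated Proposition~\ref{prop:bpdca-property} (iii): the two sums $\rho\sum a_k$ and $\sum a_{k+1}$ must be aligned on a common index range, which is exactly what the vanishing base term $a_0=0$ and the discarded nonnegative remainders make possible. I would also take care to confirm that the restart step of Algorithm~\ref{alg:bpdcae} genuinely enforces $D_h(x^k,y^k)\leq\rho\,a_k$ at every iteration (trivially when $\beta_k$ is reset to $0$), since this is what legitimizes the substitution underlying the recursion.
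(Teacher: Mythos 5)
Your proof is correct, and it reaches the paper's bound by an equivalent but slightly repackaged route. For (i) you agree with the paper verbatim: the monotonicity is precisely the final assertion of Lemma~\ref{lemma:obj-decrease-ex}. For (ii)--(iii) the paper telescopes the $H_M$-form~\eqref{ineq:aux-decrease}, summing $\lambda\bigl(H_M(x^{k+1},x^k)-H_M(x^k,x^{k-1})\bigr)\leq -(1-\lambda M)D_h(x^k,x^{k+1})-(\lambda M-\rho)D_h(x^{k-1},x^k)$ from $k=0$ to $n$ and then using $D_h(x^{-1},x^0)=0$ and $D_h(x^n,x^{n+1})\geq0$ to arrive at~\eqref{ineq:aux-property-bregman-sum}; you instead telescope the raw $\Psi$-inequality~\eqref{ineq:obj-decrease-ex} after inserting the restart bound $D_h(x^k,y^k)\leq\rho D_h(x^{k-1},x^k)$, which the algorithm does enforce at every $k$ (trivially when $\beta_k$ is reset, since then $y^k=x^k$), with the base case $a_0=D_h(x^{-1},x^0)=0$ exactly as you note. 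The two computations are algebraically the same: in the paper's sum the coefficients $(1-\lambda M)$ and $(\lambda M-\rho)$ recombine, after the index shift permitted by $a_0=0$ and dropping the nonnegative boundary term, into the factor $1-\rho$ that your recursion $\lambda\Psi(x^{k+1})\leq\lambda\Psi(x^k)+\rho a_k-a_{k+1}$ produces directly, so your stated motivation --- that the paper's coefficients ``may degenerate'' at $M=\rho/\lambda$ or $M=1/\lambda$ --- is unfounded: an endpoint choice kills one coefficient, but the aligned sums still carry total weight $1-\rho>0$. What your packaging genuinely buys is a clarification the paper leaves implicit: parts (ii) and (iii) hold for any $M$, without reference to $H_M$ at all, the hypothesis $\rho/\lambda\leq M\leq1/\lambda$ being needed only for the monotonicity in (i).
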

    \begin{proof}
    (i) The statement was proved in Lemma~\ref{lemma:obj-decrease-ex}.

    (ii) Modify~\eqref{ineq:aux-decrease} into
    \begin{align*}
        \lambda( H_M(x^{k+1}, x^{k}) - H_M(x^{k}, x^{k-1}))
        \leq& -(1 - \lambda M) D_h(x^k, x^{k+1}) - (\lambda M - \rho)D_h(x^{k-1}, x^{k})\\
        &- (1-\lambda L) D_h(x^{k+1}, y^k)\\
        \leq& -(1 - \lambda M) D_h(x^k, x^{k+1}) - (\lambda M - \rho)D_h(x^{k-1}, x^{k}),
    \end{align*}
    where the last inequality comes from $(1-\lambda L) D_h(x^{k+1}, y^k) \geq 0$.
    Let $n$ be a positive integer.
    Summing the above inequality from $k=0$ to $n$ and letting $\Psi_* = v(\mathcal{P}) > - \infty$,
    we find that
    \begin{align}
        \sum_{k=1}^n D_h(x^{k-1}, x^{k}) = \sum_{k=0}^n D_h(x^{k-1}, x^{k})
        &\leq \frac{\lambda\left( H_M(x^0, x^{-1}) - H_M(x^{n+1}, x^{n}) \right)}{1 - \rho}\nonumber\\
        &\leq \frac{\lambda\left( \Psi(x^0) - \Psi(x^{n+1}) \right)}{1 - \rho}\nonumber\\
        &\leq \frac{\lambda\left( \Psi(x^0) - \Psi_* \right)}{1 - \rho},\label{ineq:aux-property-bregman-sum}
    \end{align}
    where the second inequality comes from $D_h(x^{-1}, x^0) = 0$, $x^{-1} = x^0$, and $D_h(x^{n}, x^{n+1}) \geq 0$.
    Note that $x^{n+1} \in C$ by Assumption~\ref{ass2-ex}.
    By taking the limit as $n \to \infty$, we arrive at the former statement (ii).
    The latter statement directly follows from the former.

    (iii)
    From~\eqref{ineq:aux-property-bregman-sum}, we immediately have
    \begin{align*}
        n\min_{1\leq k \leq n}D_h(x^{k-1}, x^{k})
        \leq \sum_{k=1}^n D_h(x^{k-1}, x^{k})
        \leq \frac{\lambda\left( \Psi(x^0) - \Psi_*\right)}{1 - \rho}.
    \end{align*}
    \qed
    \end{proof}

    \subsection{Convergence Analysis of BPDCAe}\label{subsec:convergence-analysis-of-bpdcae}
    The proofs of Theorems~\ref{theorem:global-subsequential-conv-bpdcae},~\ref{theorem:global-convergence-bpdcae},~\ref{theorem:global-convergence-bpdcae-g},
    and Proposition~\ref{prop:zeta-ex} are given in the Appendix.
    They follow arguments that are similar to their BPDCA counterparts.
    \begin{theorem}[Global subsequential convergence of BPDCAe]
        \label{theorem:global-subsequential-conv-bpdcae}
        Suppose that Assumptions $\ref{ass1}$, $\ref{ass2-ex}$, $\ref{ass4}$, and $\ref{ass:g-convex}$ hold.
        Let $\{x^k\}_{k=0}^{\infty}$ be a sequence generated by BPDCAe with $0 < \lambda L < 1$ for solving $(\mathcal{P})$.
        Assume that the auxiliary function $H_M(x^k, x^{k-1})$
        satisfies $\frac{\rho}{\lambda} \leq M \leq \frac{1}{\lambda} $
        for $\rho \in [0, 1)$.
        Then, the following statements hold:
        \begin{enumerate}
            \renewcommand{\labelenumi}{\rm{(\roman{enumi})}}
            \item The sequence $\{x^k\}_{k=0}^{\infty}$ is bounded.
            \item $\lim_{k\to\infty}\|x^{k+1} - x^k\| = 0$.
            \item Any accumulation point of $\{x^k\}_{k=0}^{\infty}$ is a limiting critical point of $(\mathcal{P})$.
        \end{enumerate}
    \end{theorem}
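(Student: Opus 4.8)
The plan is to mirror the three-part argument of Theorem~\ref{theorem:global-subsequential-conv-bpdca}, replacing the monotonicity tool Proposition~\ref{prop:bpdca-property} with its extrapolated analogue Proposition~\ref{prop:bpdcae-property} and carefully tracking the auxiliary iterate $y^k$. For (i), I would start from the non-increasing property of the auxiliary function (Proposition~\ref{prop:bpdcae-property} (i)): since the initialization $x^{-1}=x^0$ forces $D_h(x^{-1},x^0)=0$, we have $H_M(x^k,x^{k-1})\le H_M(x^0,x^{-1})=\Psi(x^0)$ for all $k$. As $M>0$ and $D_h\ge0$, the definition $H_M(x^k,x^{k-1})=\Psi(x^k)+MD_h(x^{k-1},x^k)$ yields $\Psi(x^k)\le\Psi(x^0)$, so the whole sequence stays in a fixed lower level set of $\Psi$; boundedness then follows from the level-boundedness Assumption~\ref{ass4} (iii).

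For (ii), Proposition~\ref{prop:bpdcae-property} (ii) gives $\sum_{k=1}^{\infty} D_h(x^{k-1},x^k)<\infty$ and hence $D_h(x^{k-1},x^k)\to0$. Because $h$ is $\sigma$-strongly convex (Assumption~\ref{ass4} (i)), we have $D_h(x^{k-1},x^k)\ge\frac{\sigma}{2}\|x^k-x^{k-1}\|^2$, so $\|x^{k+1}-x^k\|\to0$. I would also record here the two consequences needed below: since $\{\beta_k\}_{k=0}^{\infty}\subset[0,1)$, the identity $y^k=x^k+\beta_k(x^k-x^{k-1})$ gives $\|y^k-x^k\|\le\|x^k-x^{k-1}\|\to0$, so $\{y^k\}$ is bounded and shares every accumulation point of $\{x^k\}$.

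For (iii), I would follow the proof of Theorem~\ref{theorem:global-subsequential-conv-bpdca} (iv). First, boundedness of $\{\xi^k\}$ is obtained by the identical contradiction argument as in Theorem~\ref{theorem:global-subsequential-conv-bpdca} (ii), which uses only the convexity and finiteness of $f_2$ on $\real^d$ together with the boundedness of $\{x^k\}$. Then, for an accumulation point $\tilde{x}$ with $x^{k_j}\to\tilde{x}$, the first-order optimality condition for subproblem~\eqref{subprob:bpdcae} reads
$$\xi^{k_j}-\nabla f_1(y^{k_j})+\frac{1}{\lambda}\bigl(\nabla h(y^{k_j})-\nabla h(x^{k_j+1})\bigr)\in\partial g(x^{k_j+1}).$$
Since $\|x^{k_j+1}-x^{k_j}\|\to0$ and $\|y^{k_j}-x^{k_j}\|\to0$, both $x^{k_j+1}$ and $y^{k_j}$ converge to $\tilde{x}$; the local Lipschitz continuity of $\nabla h$ (Assumption~\ref{ass4} (ii)) on a bounded set containing these iterates makes the Bregman term vanish, while continuity of $\nabla f_1$ gives $\nabla f_1(y^{k_j})\to\nabla f_1(\tilde{x})$. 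Passing to a further subsequence along which $\xi^{k_j}\to\tilde{\xi}$, continuity of $f_2$ yields $\tilde{\xi}\in\partial_{\mathrm{c}}f_2(\tilde{x})$. Taking the limit and invoking the outer semicontinuity of the limiting subdifferential $\partial g$ then gives $\tilde{\xi}-\nabla f_1(\tilde{x})\in\partial g(\tilde{x})$, i.e. $0\in\nabla f_1(\tilde{x})-\partial_{\mathrm{c}}f_2(\tilde{x})+\partial g(\tilde{x})$, so $\tilde{x}$ is a limiting critical point.

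I expect the main obstacle to be the final limit transition in (iii): justifying $\tilde{\xi}-\nabla f_1(\tilde{x})\in\partial g(\tilde{x})$ rigorously requires closedness of the graph of the limiting subdifferential, which in turn needs value convergence $g(x^{k_j+1})\to g(\tilde{x})$ rather than mere lower semicontinuity of $g$. Controlling this simultaneously with the extrapolation error---ensuring that the two limits $y^{k_j}\to\tilde{x}$ and $x^{k_j+1}\to\tilde{x}$ remain compatible inside the subgradient inclusion---is the delicate point, whereas everything else is a routine adaptation of the non-extrapolated argument.
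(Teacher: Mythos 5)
Your proposal is correct and follows essentially the same route as the paper's Appendix proof: (i) is identical, (ii) is a legitimate shortcut via Proposition~\ref{prop:bpdcae-property}~(ii) combined with the $\sigma$-strong convexity of $h$ (the paper instead re-derives a telescoping sufficient-decrease inequality involving the $\beta_k$ terms, but your version is cleaner and needs only hypotheses already in force), and (iii) is the same optimality-condition-plus-limit argument, with your grouping $\nabla f_1(y^{k_j})\to\nabla f_1(\tilde{x})$ handled directly by continuity rather than the paper's regrouping of $\nabla f_1(x^{k_j+1})$ onto the right-hand side. The one obstacle you flag at the end is not actually an obstacle under the theorem's hypotheses: Assumption~\ref{ass:g-convex} makes $g$ convex, so $\partial g=\partial_{\mathrm{c}} g$, and closedness of its graph follows from the subgradient inequality $g(y)\geq g(x^{k_j+1})+\langle v^{k_j}, y-x^{k_j+1}\rangle$ together with mere lower semicontinuity of $g$ in the limit---no value convergence $g(x^{k_j+1})\to g(\tilde{x})$ is required, which is precisely why the paper works with $\partial_{\mathrm{c}} g$ throughout this proof.
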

    \begin{proposition}
        \label{prop:zeta-ex}
        Suppose that Assumptions $\ref{ass1}$, $\ref{ass2-ex}$, $\ref{ass4}$, and $\ref{ass:g-convex}$ hold.
        Let $\{x^k\}_{k=0}^{\infty}$ be a sequence generated by BPDCAe with $0 < \lambda L < 1$ for solving $(\mathcal{P})$
        and $\frac{\rho}{\lambda} \leq M \leq \frac{1}{\lambda}$ for $\rho \in [0, 1)$.
        Then, the following statements hold:
        \begin{enumerate}
            \renewcommand{\labelenumi}{\rm{(\roman{enumi})}}
            \item $\zeta := \lim_{k\to\infty}\Psi(x^k)$ exists.
            \item $\Psi \equiv \zeta$ on $\Omega$, where $\Omega$ is the set of accumulation points of $\{x^k\}_{k=0}^{\infty}$.
        \end{enumerate}
    \end{proposition}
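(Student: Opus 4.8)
The plan is to follow the proof of Proposition~\ref{prop:zeta} almost verbatim, with one structural change dictated by extrapolation: since $\{\Psi(x^k)\}$ need no longer be monotone, the role of the monotone objective is played by the auxiliary function $H_M$. For part (i), I would first note that $H_M(x^k,x^{k-1}) = \Psi(x^k) + M\,D_h(x^{k-1},x^k) \geq \Psi(x^k) \geq v(\mathcal{P}) > -\infty$ by Assumption~\ref{ass1} (v) together with $M\geq \rho/\lambda\geq 0$ and $D_h\geq 0$, so $\{H_M(x^k,x^{k-1})\}$ is bounded below; combined with its non-increasingness from Proposition~\ref{prop:bpdcae-property} (i), it converges to some $H^\ast\in\real$. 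Proposition~\ref{prop:bpdcae-property} (ii) gives $D_h(x^{k-1},x^k)\to 0$, whence $\Psi(x^k)=H_M(x^k,x^{k-1})-M\,D_h(x^{k-1},x^k)\to H^\ast$, so $\zeta:=\lim_{k\to\infty}\Psi(x^k)$ exists and equals $H^\ast$.

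For part (ii), I would fix $\hat{x}\in\Omega$ with $x^{k_j}\to\hat{x}$ and exploit the global optimality of $x^k$ in subproblem~\eqref{subprob:bpdcae} by comparing with $u=\hat{x}$, then add $f_1(x^k)$ and invoke the convexity of $f_1$ at $x^k$, exactly as in Proposition~\ref{prop:zeta} (ii). The only difference is that the comparison is now anchored at the extrapolated point $y^{k-1}$ rather than at $x^{k-1}$, which yields
\begin{align*}
g(x^k) + f_1(x^k) \leq{}& g(\hat{x}) + f_1(\hat{x}) + \langle \nabla f_1(y^{k-1}) - \xi^{k-1}, \hat{x} - x^k \rangle \\
&{} + \langle \nabla f_1(x^k), x^k - \hat{x} \rangle + \frac{1}{\lambda} D_h(\hat{x}, y^{k-1}) - \frac{1}{\lambda} D_h(x^k, y^{k-1}).
\end{align*}
Taking $k=k_j\to\infty$, the two inner-product terms vanish because $x^{k_j}-\hat{x}\to 0$ while $\{\nabla f_1(y^{k-1})\}$, $\{\xi^{k-1}\}$, and $\{\nabla f_1(x^k)\}$ are bounded, the last Bregman term is nonpositive, and $D_h(\hat{x},y^{k_j-1})\to 0$ (handled below). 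This gives $\limsup_{j\to\infty}\bigl(g(x^{k_j})+f_1(x^{k_j})\bigr)\leq g(\hat{x})+f_1(\hat{x})$. Since $-f_2$ is continuous on $\real^d$ by convexity, this becomes $\limsup_{j\to\infty}\Psi(x^{k_j})\leq\Psi(\hat{x})$, and combining with the lower semicontinuity of $\Psi$ yields $\Psi(x^{k_j})\to\Psi(\hat{x})$. Part (i) then forces $\Psi(\hat{x})=\zeta$, and since $\hat{x}\in\Omega$ is arbitrary, $\Psi\equiv\zeta$ on $\Omega$.

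The step I expect to require the most care is controlling the extrapolation terms, which is where the argument genuinely departs from the BPDCA case. The crucial observation is that $\|y^{k}-x^{k}\|=\beta_k\|x^k-x^{k-1}\|\leq\|x^k-x^{k-1}\|$ because $\{\beta_k\}\subset[0,1)$; together with $\|x^{k+1}-x^k\|\to 0$ from Theorem~\ref{theorem:global-subsequential-conv-bpdcae} (ii), this gives $y^{k-1}-x^{k-1}\to 0$, and hence $x^{k_j-1}\to\hat{x}$ and $y^{k_j-1}\to\hat{x}$ along the subsequence. Continuity of $h$ and $\nabla h$ on $\real^d$ (Assumption~\ref{ass4}) then makes $D_h(\hat{x},y^{k_j-1})\to D_h(\hat{x},\hat{x})=0$, while $-\frac{1}{\lambda}D_h(x^{k_j},y^{k_j-1})\leq 0$ is harmless when passing to the $\limsup$. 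The remaining ingredient, boundedness of $\{\xi^k\}$ (needed so the inner-product term vanishes), is not stated in Theorem~\ref{theorem:global-subsequential-conv-bpdcae}, but it follows verbatim from the argument of Theorem~\ref{theorem:global-subsequential-conv-bpdca} (ii), which uses only the boundedness of $\{x^k\}$ and the convexity of $f_2$ on $\real^d$.
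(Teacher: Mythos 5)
Your proof is correct and follows essentially the same route as the paper's: part (i) is verbatim the paper's argument via the non-increasing, bounded-below auxiliary sequence $\{H_M(x^k,x^{k-1})\}$ together with $D_h(x^{k-1},x^k)\to 0$ from Proposition~\ref{prop:bpdcae-property} (ii), and part (ii) is the paper's optimality-plus-convexity comparison anchored at the extrapolated point $y^{k-1}$, concluded via continuity of $-f_2$ and lower semicontinuity of $\Psi$. The only cosmetic difference is how $D_h(\hat{x},y^{k_j-1})$ is killed --- the paper adds the nonnegative term $D_h(y^{k_j-1},\hat{x})$ and bounds the symmetrized sum by $\|\nabla h(y^{k_j-1})-\nabla h(\hat{x})\|\,\|y^{k_j-1}-\hat{x}\|$, whereas you invoke continuity of $h$ and $\nabla h$ directly --- and your explicit verifications that $y^{k_j-1}\to\hat{x}$ (via $\beta_k\in[0,1)$ and Theorem~\ref{theorem:global-subsequential-conv-bpdcae} (ii)) and that $\{\xi^k\}$ is bounded (by the argument of Theorem~\ref{theorem:global-subsequential-conv-bpdca} (ii)) simply make explicit two steps the paper leaves implicit.
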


    Since $H_M(x, y)$ has a Bregman distance term,
    the subdifferential of $H_M(x, y)$ has a $\nabla h$ term.
    To prove Theorem~\ref{theorem:global-convergence-bpdcae}, we should additionally suppose
    that there is a bounded subdifferential of the gradient $\nabla h$~\cite{bpge}.

    \begin{assumption}
        \label{ass6}
        There exists a bounded $u$ such that $u \in \partial (\nabla h)$ on any bounded subset of $\real^d$.
    \end{assumption}

    We can prove the following theorems by supposing the KL property or the subanalyticity of
    the auxiliary function $H_M(x, y)$ in relation to $x$ and $y$.
    \begin{theorem}[Global convergence of BPDCAe under the local differentiability of $f_2$]
    \label{theorem:global-convergence-bpdcae}
    Suppose that Assumptions $\ref{ass1}$, $\ref{ass2-ex}$, $\ref{ass4}$, $\ref{ass5}$, $\ref{ass:g-convex}$, and $\ref{ass6}$ hold
    and that the auxiliary function $H_M(x, y)$ is a KL function satisfying $\frac{\rho}{\lambda} \leq M \leq \frac{1}{\lambda}$ for $\rho \in [0, 1)$.
    Let $\{x^k\}_{k=0}^{\infty}$ be a sequence generated by BPDCAe with $0 < \lambda L < 1$ for solving $(\mathcal{P})$.
    Then, the following statements hold:
    \begin{enumerate}
        \renewcommand{\labelenumi}{\rm{(\roman{enumi})}}
        \item $\lim_{k\to\infty}\dist((0, 0), \partial H_M(x^k, x^{k-1})) = 0$.
        \item The set of accumulation points of $\{(x^k, x^{k-1})\}_{k=0}^{\infty}$ is $\Upsilon := \left\{ (x, x)\ \middle|\ x\in\Omega \right\}$
        and $H_M\equiv\zeta$ on $\Upsilon$, where $\Omega$ is the set of accumulation points of $\{x^k\}_{k=0}^{\infty}$.
        \item The sequence $\{x^k\}_{k=0}^{\infty}$ converges to a limiting stationary point of $(\mathcal{P})$;
        moreover, $\sum_{k=1}^{\infty} \|x^k - x^{k-1}\| < \infty$.
    \end{enumerate}
    \end{theorem}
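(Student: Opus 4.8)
The plan is to mirror the BPDCA argument of Theorem~\ref{theorem:global-convergence-bpdca}, but with the two-variable auxiliary function $H_M$ playing the role of $\Psi$ and the pair $z^k := (x^k, x^{k-1})$ playing the role of the iterate, so that the uniformized Kurdyka-\L ojasiewicz machinery of Lemma~\ref{lemma:uniformized-kl} applies to $H_M$ on a compact set. I would first establish the relative-error estimate (i). Writing $H_M(x, y) = \Psi(x) + M D_h(y, x)$, the subdifferential splits into an $x$-block and a $y$-block: the $y$-block is $M(\nabla h(y) - \nabla h(x))$, while the $x$-block is $\partial \Psi(x) + M\nabla_x D_h(y, x)$, where $\nabla_x D_h(y, x) = W(x - y)$ for some $W \in \partial(\nabla h)(x)$. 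Evaluating at $(x^k, x^{k-1})$, Assumption~\ref{ass6} bounds $\|W\|$ on the bounded set containing the iterates, so the Bregman contribution is $O(\|x^k - x^{k-1}\|)$. For the $\partial\Psi(x^k)$ part I would use the first-order optimality condition of subproblem~\eqref{subprob:bpdcae} that produced $x^k$, namely $\frac{1}{\lambda}(\nabla h(y^{k-1}) - \nabla h(x^k)) - \nabla f_1(y^{k-1}) + \xi^{k-1} \in \partial g(x^k)$, together with $\xi^{k-1} = \nabla f_2(x^{k-1})$ (valid for large $k$ by Assumption~\ref{ass5}) and the representation $\partial\Psi(x^k) = \nabla f_1(x^k) - \nabla f_2(x^k) + \partial g(x^k)$, to exhibit the explicit element $\nabla f_1(x^k) - \nabla f_1(y^{k-1}) + \nabla f_2(x^{k-1}) - \nabla f_2(x^k) + \frac{1}{\lambda}(\nabla h(y^{k-1}) - \nabla h(x^k)) \in \partial\Psi(x^k)$.

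Next I would bound this element using the Lipschitz continuity of $\nabla f_1, \nabla f_2, \nabla h$ on bounded sets (Assumptions~\ref{ass4}(ii) and~\ref{ass5}). The only new feature relative to BPDCA is the extrapolation point $y^{k-1} = x^{k-1} + \beta_{k-1}(x^{k-1} - x^{k-2})$: since $\beta_{k-1} \in [0,1)$, one has $\|x^k - y^{k-1}\| \leq \|x^k - x^{k-1}\| + \|x^{k-1} - x^{k-2}\|$, so every term is controlled by $\|x^k - x^{k-1}\| + \|x^{k-1} - x^{k-2}\|$. Collecting the two blocks gives $\dist((0,0), \partial H_M(x^k, x^{k-1})) \leq \tilde{C}(\|x^k - x^{k-1}\| + \|x^{k-1} - x^{k-2}\|)$ for some $\tilde{C} > 0$, and crucially the right-hand side is $\leq \sqrt{2}\,\tilde{C}\,\|z^k - z^{k-1}\|$, i.e.\ a genuine relative-error bound in the pair variable. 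Part (i) then follows from $\lim_{k\to\infty}\|x^{k+1} - x^k\| = 0$ (Theorem~\ref{theorem:global-subsequential-conv-bpdcae}(ii)).

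For (ii), because $\|x^k - x^{k-1}\| \to 0$, any accumulation point of $\{z^k\}$ must lie on the diagonal, and its first coordinate must lie in $\Omega$; conversely every $x \in \Omega$ yields the accumulation point $(x,x)$, so the accumulation set is exactly $\Upsilon$. Since $D_h(x,x) = 0$ and $\Psi \equiv \zeta$ on $\Omega$ by Proposition~\ref{prop:zeta-ex}(ii), we obtain $H_M(x,x) = \Psi(x) = \zeta$ on $\Upsilon$. Finally, for (iii) I would combine the sufficient-decrease inequality~\eqref{ineq:aux-decrease-ex} — which, via the $\sigma$-strong convexity of $h$ (Assumption~\ref{ass4}(i)), yields $H_M(z^k) - H_M(z^{k+1}) \geq a\,\|z^{k+1} - z^k\|^2$ once the coefficients $\frac{1}{\lambda} - M$ and $M - \frac{\rho}{\lambda}$ are positive — with the relative-error bound (i) and the uniformized KL inequality for $H_M$ on the compact set $\Upsilon$ (Lemma~\ref{lemma:uniformized-kl}, applicable since $H_M \equiv \zeta$ there and $H_M$ is assumed KL). After splitting off the trivial finite-termination case, the concavity of $\phi$, the arithmetic-geometric mean inequality, and summation reproduce the telescoping argument of Theorem~\ref{theorem:global-convergence-bpdca}(ii), giving $\sum_{k=1}^\infty \|x^k - x^{k-1}\| < \infty$; hence $\{x^k\}$ is Cauchy and converges to a limiting critical point by Theorem~\ref{theorem:global-subsequential-conv-bpdcae}(iii), which is a limiting stationary point because $f_2$ is differentiable (Assumption~\ref{ass5}).

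I would flag the computation and bounding of the two-block subdifferential $\partial H_M$ in part (i) as the main obstacle: one must correctly differentiate the Bregman term in its \emph{second} argument and justify the $\partial(\nabla h)$ estimate through Assumption~\ref{ass6}, and then verify that the extrapolation term $y^{k-1}$ does not spoil the relative-error bound (it contributes the extra $\|x^{k-1} - x^{k-2}\|$, which is still absorbed into $\|z^k - z^{k-1}\|$). A secondary point requiring care is the positivity of the decrease coefficients at the endpoints $M = \rho/\lambda$ and $M = 1/\lambda$ of the admissible interval; choosing $M$ strictly inside $(\rho/\lambda, 1/\lambda)$, which is possible since $\rho < 1$, guarantees a clean quadratic descent in $\|z^{k+1} - z^k\|$.
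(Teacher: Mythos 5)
Your proposal is correct and is essentially the paper's own proof: for (i) you reproduce the paper's computation exactly --- the two-block subdifferential of $H_M$ with a bounded $U^k\in\partial(\nabla h)(x^k)$ supplied by Assumption~\ref{ass6}, the first-order optimality condition of subproblem~\eqref{subprob:bpdcae} combined with $\xi^{k-1}=\nabla f_2(x^{k-1})$ on the neighborhood $\mathcal{N}$ from Assumption~\ref{ass5}, and the resulting estimate $\dist((0,0),\partial H_M(x^k,x^{k-1}))\leq A_2\left(\|x^k-x^{k-1}\|+\|x^{k-1}-x^{k-2}\|\right)$ with the extrapolation point absorbed via $\beta_{k-1}<1$ --- while (ii) matches the paper's diagonal argument with Proposition~\ref{prop:zeta-ex}, and (iii) spells out the KL telescoping that the paper itself handles only by reference to Theorem~\ref{theorem:global-convergence-bpdca}(ii) (you are even slightly more careful than the paper in writing out the $y$-block $M(\nabla h(x^{k-1})-\nabla h(x^k))$, which the paper's displayed formula for $\partial H_M$ omits but which is harmlessly $O(\|x^k-x^{k-1}\|)$). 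The one adjustment concerns your endpoint caveat: $M$ is fixed by the hypothesis (the KL property is assumed for that particular $H_M$), so you cannot re-choose it strictly inside $(\rho/\lambda,1/\lambda)$; but no re-choice is needed, since $\rho<1$ guarantees that at least one of the coefficients $\frac{1}{\lambda}-M$ and $M-\frac{\rho}{\lambda}$ in~\eqref{ineq:aux-decrease-ex} is at least $\frac{1-\rho}{2\lambda}$, and a quadratic decrease in either $\|x^{k+1}-x^k\|$ or $\|x^k-x^{k-1}\|$ alone suffices for the arithmetic--geometric-mean and telescoping step against the two-term error bound from (i).
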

    \begin{theorem}[Global convergence of BPDCAe under the local differentiability of $g$]
    Suppose that Assumptions $\ref{ass1}$, $\ref{ass2-ex}$, $\ref{ass4}$, $\ref{ass5g}$, $\ref{ass:g-convex}$, and $\ref{ass6}$ hold
    and that the auxiliary function $H_M(x, y)$ is subanalytic satisfying $\frac{\rho}{\lambda} \leq M \leq \frac{1}{\lambda}$ for $\rho \in [0, 1)$.
    Let $\{x^k\}_{k=0}^{\infty}$ be a sequence generated by BPDCAe with $0 < \lambda L < 1$ for solving $(\mathcal{P})$.
    Then, the sequence $\{x^k\}_{k=0}^{\infty}$ converges to a limiting critical point of $(\mathcal{P})$;
    moreover, $\sum_{k=1}^{\infty} \|x^k - x^{k-1}\| < \infty$.
    \label{theorem:global-convergence-bpdcae-g}
    \end{theorem}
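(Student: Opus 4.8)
The plan is to mirror the proof of Theorem~\ref{theorem:global-convergence-bpdca-g}, but to replace the objective $\Psi$ by the auxiliary function $H_M$ on $\real^d\times\real^d$ and to track the two--variable structure introduced by the extrapolation. First I would record the consequences of the earlier results: since $g$ is differentiable (Assumption~\ref{ass5g}) and $f_1,f_2$ are convex hence continuous, both $\Psi$ and $H_M$ are continuous; Theorem~\ref{theorem:global-subsequential-conv-bpdcae} gives boundedness of $\{x^k\}_{k=0}^{\infty}$ and $\lim_{k\to\infty}\|x^{k+1}-x^k\|=0$, while Proposition~\ref{prop:zeta-ex} gives that $\zeta:=\lim_{k\to\infty}\Psi(x^k)$ exists with $\Psi\equiv\zeta$ on the accumulation set $\Omega$. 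Because $\|x^k-x^{k-1}\|\to0$, every accumulation point of $\{(x^k,x^{k-1})\}_{k=0}^{\infty}$ has the form $(x,x)$ with $x\in\Omega$, and on $\Upsilon:=\{(x,x)\mid x\in\Omega\}$ one has $H_M(x,x)=\Psi(x)=\zeta$. The subgradients $\{\xi^k\}$ are bounded because the first--order optimality condition for~\eqref{subprob:bpdcae} (with $g$ differentiable) reads $\xi^k=\nabla g(x^{k+1})+\nabla f_1(y^k)+\frac1\lambda(\nabla h(x^{k+1})-\nabla h(y^k))$, and $\{x^k\},\{y^k\}$ are bounded with $\nabla g,\nabla f_1,\nabla h$ locally Lipschitz (Assumptions~\ref{ass4} and~\ref{ass5g}).

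The central step is to bound a subgradient of $-H_M$ at $(x^k,x^{k-1})$ by consecutive step lengths. Writing $-H_M(x,y)=f_2(x)+[-f_1(x)-g(x)-MD_h(y,x)]$ and using the convexity of $f_2$ together with Assumption~\ref{ass6} to control the only $x$--nonsmooth contribution $M\langle\nabla h(x),y-x\rangle$, whose formal $x$--gradient is $M\nabla^2 h(x)(y-x)$, I would show that
\begin{align*}
\bigl(\xi^k-\nabla f_1(x^k)-\nabla g(x^k)+Mu^k(x^{k-1}-x^k),\ M(\nabla h(x^k)-\nabla h(x^{k-1}))\bigr)\in\partial(-H_M)(x^k,x^{k-1}),
\end{align*}
where $u^k\in\partial(\nabla h)(x^k)$ is bounded on the bounded set containing the iterates. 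Substituting the optimality expression for $\xi^k$, invoking the local Lipschitz continuity of $\nabla f_1,\nabla g,\nabla h$ and the boundedness of $u^k$, and using $\sup_k\beta_k<1$ to tame the extrapolation terms, the norm of this element, and hence $\dist((0,0),\partial(-H_M)(x^k,x^{k-1}))$, is bounded above by $C(\|x^{k+1}-x^k\|+\|x^k-x^{k-1}\|)$ for some $C>0$. This is the analogue of~\eqref{ineq:dist-euc}, now carrying two step lengths because of the product structure of $H_M$.

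With this in hand I would apply the \L ojasiewicz inequality for subanalytic functions~\cite[Theorem 3.1]{Bolte2007} to the continuous subanalytic function $-H_M$ on $\real^{2d}$, exactly as in Theorem~\ref{theorem:global-convergence-bpdca-g}. Covering the compact set $\Upsilon$ by finitely many balls and taking uniform constants $\nu>0$, $\theta\in[0,1)$, I obtain, for all large $k$ (since $\dist((x^k,x^{k-1}),\Upsilon)\to0$ places the iterates in the cover),
\begin{align*}
|H_M(x^k,x^{k-1})-\zeta|^{\theta}\leq\nu\,\dist((0,0),\partial(-H_M)(x^k,x^{k-1}))\leq\nu C\bigl(\|x^{k+1}-x^k\|+\|x^k-x^{k-1}\|\bigr).
\end{align*}
On the other side, Lemma~\ref{lemma:obj-decrease-ex} (through the estimate used for Proposition~\ref{prop:bpdcae-property}(ii)) together with the $\sigma$--strong convexity of $h$ and $\frac\rho\lambda\leq M\leq\frac1\lambda$ yields, after discarding the nonnegative term $(\frac1\lambda-L)D_h(x^{k+1},y^k)$, a sufficient--decrease inequality $H_M(x^k,x^{k-1})-H_M(x^{k+1},x^k)\geq a_0(\|x^{k+1}-x^k\|^2+\|x^k-x^{k-1}\|^2)$ for some $a_0>0$ (for $M$ in the interior; the endpoint cases follow by the same bookkeeping as in the BPDCAe counterpart of Theorem~\ref{theorem:global-convergence-bpdca}).

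Finally, writing $\Delta_k:=H_M(x^k,x^{k-1})-\zeta\geq0$, using the concavity of $t\mapsto t^{1-\theta}$ as in~\eqref{ineq:psi-zeta-xk} and the elementary bound $\|x^{k+1}-x^k\|^2+\|x^k-x^{k-1}\|^2\geq\tfrac12(\|x^{k+1}-x^k\|+\|x^k-x^{k-1}\|)^2$, the two estimates combine to give
\begin{align*}
\Delta_k^{1-\theta}-\Delta_{k+1}^{1-\theta}\geq(1-\theta)\Delta_k^{-\theta}(\Delta_k-\Delta_{k+1})\geq\frac{(1-\theta)a_0}{2\nu C}\bigl(\|x^{k+1}-x^k\|+\|x^k-x^{k-1}\|\bigr).
\end{align*}
Summing this telescoping bound from a large index to infinity shows $\sum_{k=1}^{\infty}\|x^k-x^{k-1}\|<\infty$, so $\{x^k\}_{k=0}^{\infty}$ is Cauchy; its limit is a limiting critical point of $(\mathcal{P})$ by Theorem~\ref{theorem:global-subsequential-conv-bpdcae}(iii). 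The degenerate case $\Delta_k=0$ (finite convergence) is treated separately, as in Theorem~\ref{theorem:global-convergence-bpdca}, and the overall argument parallels~\cite[Theorem 2]{attouch09} and~\cite[Theorem 3.5]{hoai2018}. The main obstacle is the second step: computing $\partial(-H_M)$ when $h$ is merely $\mathcal{C}^1$, where $\nabla_x D_h(y,x)=-\nabla^2 h(x)(y-x)$ is meaningful only through $\partial(\nabla h)$, so that Assumption~\ref{ass6} is precisely what makes the subgradient bound, and hence the whole argument, go through.
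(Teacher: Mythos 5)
Your proposal is correct and follows essentially the same route as the paper's own proof in Appendix~\ref{subsec:proof-ofcref3}: the optimality condition of~\eqref{subprob:bpdcae} combined with Assumption~\ref{ass6} yields an element of $\partial(-H_M)(x^k,x^{k-1})$ bounded by $C(\|x^{k+1}-x^k\|+\|x^k-x^{k-1}\|)$, the \L ojasiewicz inequality of~\cite[Theorem 3.1]{Bolte2007} is applied to the subanalytic function $-H_M$ with uniform constants obtained from a finite cover of the compact accumulation set, and the concavity of $t^{1-\theta}$ plus the decrease from Lemma~\ref{lemma:obj-decrease-ex} gives the telescoping summability, with the limit identified via Theorem~\ref{theorem:global-subsequential-conv-bpdcae}~(iii). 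If anything, you are more explicit than the paper (which defers the final telescoping to ``similar to Theorem~\ref{theorem:global-convergence-bpdca-g}''), e.g., in writing out the $y$-component $M(\nabla h(x^k)-\nabla h(x^{k-1}))$ of the subgradient and in flagging the bookkeeping needed for the endpoint values $M=\rho/\lambda$ and $M=1/\lambda$ of the sufficient-decrease constant.
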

    Finally, we have theorems regarding the convergence rate of BPDCAe, whose proof is almost identical to Theorems~\ref{theorem:rate-bpdca} and~\ref{theorem:rate-bpdca-g}.
    \begin{theorem}[Rate of convergence under the local differentiability of $f_2$]
    Suppose that Assumptions $\ref{ass1}$, $\ref{ass2-ex}$, $\ref{ass4}$, $\ref{ass5}$, $\ref{ass:g-convex}$, and $\ref{ass6}$ hold.
    Let $\{x^k\}_{k=0}^{\infty}$ be a sequence generated by BPDCAe with $0 < \lambda L < 1$ for solving $(\mathcal{P})$
    and suppose that $\{x^k\}_{k=0}^{\infty}$ converges to some $\tilde{x}\in\mathcal{X}$.
    Suppose further that the auxiliary function $H_M(x, y)$ satisfying $\frac{\rho}{\lambda} \leq M \leq \frac{1}{\lambda}$ for $\rho \in [0, 1)$ is a KL function with $\phi$ in the KL inequality~$\eqref{ineq:kl}$
    taking the form $\phi(s) = cs^{1-\theta}$ for some $\theta \in [0, 1)$ and $c > 0$.
    Then, the following statements hold:
    \begin{enumerate}
        \renewcommand{\labelenumi}{\rm{(\roman{enumi})}}
        \item If $\theta = 0$, then there exists $k_0 > 0$ such that $x^k$ is constant for $k > k_0$;
        \item If $\theta \in (0, \frac{1}{2}]$, then there exist $c_1 > 0, k_1 > 0$, and $\eta\in(0, 1)$
        such that $\|x^k - \tilde{x}\| < c_1\eta^k$ for $k > k_1$;
        \item If $\theta \in (\frac{1}{2}, 1)$, then there exist $c_2 > 0$ and $k_2 > 0$
        such that $\|x^k - \tilde{x}\| < c_{2}k^{-\frac{1-\theta}{2\theta-1}}$ for $k > k_2$.
    \end{enumerate}
    \end{theorem}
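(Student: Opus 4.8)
The plan is to transcribe the proofs of Theorems~\ref{theorem:rate-bpdca} and~\ref{theorem:rate-bpdca-g} almost verbatim, making two systematic replacements: the objective $\Psi$ is replaced throughout by the auxiliary function $H_M$, and the scalar iterate sequence $\{x^k\}$ by the pair sequence $\{(x^k,x^{k-1})\}$ in $\real^d\times\real^d$, on which $H_M$ is assumed to be a KL function with $\phi(s)=cs^{1-\theta}$. The three structural ingredients driving the earlier proofs all have counterparts here: (a) a sufficient-decrease inequality, namely Lemma~\ref{lemma:obj-decrease-ex} under $\frac{\rho}{\lambda}\le M\le\frac1\lambda$, converted through the $\sigma$-strong convexity of $h$ (Assumption~\ref{ass4} (i)) into $H_M(x^k,x^{k-1})-H_M(x^{k+1},x^k)\ge A_2'(\|x^{k+1}-x^k\|^2+\|x^k-x^{k-1}\|^2)$; (b) a subdifferential upper bound $\dist((0,0),\partial H_M(x^k,x^{k-1}))\le A_1'(\|x^{k+1}-x^k\|+\|x^k-x^{k-1}\|)$, established in the proof of Theorem~\ref{theorem:global-convergence-bpdcae} (i); and (c) the KL inequality for $H_M$. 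I would also record, from Proposition~\ref{prop:zeta-ex} and Proposition~\ref{prop:bpdcae-property} (i)--(ii), that $\{H_M(x^k,x^{k-1})\}$ is non-increasing and converges to $\zeta$.

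For $\theta=0$ I would argue by contradiction as in Theorem~\ref{theorem:rate-bpdca} (i): assuming $H_M(x^k,x^{k-1})>\zeta$ for all $k$, the KL inequality with $\phi'\equiv c$ gives $\dist((0,0),\partial H_M(x^k,x^{k-1}))\ge 1/c$ for all large $k$, contradicting Theorem~\ref{theorem:global-convergence-bpdcae} (i). Hence $H_M(x^{k_0},x^{k_0-1})=\zeta$ for some $k_0$; the non-increasing property propagates this to all $k\ge k_0$, and ingredient (a) then forces $\|x^{k+1}-x^k\|=0$, so $x^k$ is eventually constant.

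For $\theta\in(0,1)$ I would set $R_k=H_M(x^k,x^{k-1})-\zeta$ and $S_k=\sum_{j=k}^{\infty}\|x^{j+1}-x^j\|$ (finite by Theorem~\ref{theorem:global-convergence-bpdcae} (iii)), treating only the case $R_k>0$ for all $k$ since $R_{k_0}=0$ reduces to the finite-convergence argument above. Combining (a), (b), and (c) exactly as in the passage leading to~\eqref{ineq:sk-2} yields a recursion of the same shape, with the single increment $S_{k-1}-S_k$ replaced by the two-step increment $S_{k-1}-S_{k+1}$ coming from the two iterate differences in bound (b). When $\theta\in(0,\tfrac12]$ the exponent $(1-\theta)/\theta\ge1$, together with $S_{k-1}-S_{k+1}\to0$, yields geometric decay (possibly over pairs of consecutive indices) and hence $\|x^k-\tilde x\|\le S_k\le c_1\eta^k$; when $\theta\in(\tfrac12,1)$ the exponent lies in $(0,1)$ and I would apply~\cite[Theorem 2]{attouch09} to $\{S_k\}$ to obtain $S_k\le A_5 k^{-(1-\theta)/(2\theta-1)}$, giving the stated polynomial rate.

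The main obstacle is ingredient (b). Unlike the BPDCA case, where $\dist(0,\partial\Psi(x^k))\le A_1\|x^k-x^{k-1}\|$ involved a single difference, the subdifferential of $H_M$ on the product space inherits a Bregman term $MD_h(y,x)$ whose $x$- and $y$-derivatives produce $\nabla h$ contributions; controlling these requires Assumption~\ref{ass6} (a bounded selection from $\partial(\nabla h)$) together with the local Lipschitz continuity from Assumption~\ref{ass4} (ii), and the extrapolated point $y^k=x^k+\beta_k(x^k-x^{k-1})$ with $\beta_k\in[0,1)$ must be folded back into consecutive iterate differences. The consequence is that the resulting recursion naturally carries the two-step increment $S_{k-1}-S_{k+1}$, so the delicate point is to verify that the contraction estimate and the Attouch--Bolte telescoping remain valid under this two-step structure; this holds because $\{S_k\}$ is still monotone and $S_{k-1}-S_{k+1}\to0$.
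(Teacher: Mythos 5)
Your proposal is correct and takes essentially the same route as the paper, which does not write this proof out but explicitly declares it ``almost identical'' to Theorems~\ref{theorem:rate-bpdca} and~\ref{theorem:rate-bpdca-g} --- precisely the substitutions you make ($\Psi\mapsto H_M$, iterates $\mapsto$ pairs, sufficient decrease from Lemma~\ref{lemma:obj-decrease-ex}, the subdifferential bound $\dist\bigl((0,0),\partial H_M(x^k,x^{k-1})\bigr)\le A_2\bigl(\|x^k-x^{k-1}\|+\|x^{k-1}-x^{k-2}\|\bigr)$ from the proof of Theorem~\ref{theorem:global-convergence-bpdcae}, and the resulting two-step telescoping). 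One minor caveat you should note: at the endpoint $M=\rho/\lambda$ (with $\rho>0$) the coefficient of $D_h(x^{k-1},x^k)$ in \eqref{ineq:aux-decrease-ex} vanishes, so your ingredient (a) holds only in the weaker form $H_M(x^k,x^{k-1})-H_M(x^{k+1},x^k)\ge A_2'\|x^{k+1}-x^k\|^2$ --- but this weaker form is all your two-step recursion actually uses, so the argument stands.
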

    \begin{theorem}[Rate of convergence under the local differentiability of $g$]
    \label{theorem:rate-bpdcae-g}
    Suppose that Assumptions $\ref{ass1}$, $\ref{ass2-ex}$, $\ref{ass4}$, $\ref{ass5g}$, $\ref{ass:g-convex}$, and $\ref{ass6}$ hold.
    Let $\{x^k\}_{k=0}^{\infty}$ be a sequence generated by BPDCAe with $0 < \lambda L < 1$ for solving $(\mathcal{P})$
    and suppose that $\{x^k\}_{k=0}^{\infty}$ converges to some $\tilde{x}\in\mathcal{X}$.
    Suppose further that the auxiliary function $H_M(x, y)$ satisfying $\frac{\rho}{\lambda} \leq M \leq \frac{1}{\lambda}$ for $\rho \in [0, 1)$ is subanalytic.
    Let $\theta \in [0, 1)$ be a \L ojasiewicz exponent of $\tilde{x}$.
    Then, the following statements hold:
    \begin{enumerate}
        \renewcommand{\labelenumi}{\rm{(\roman{enumi})}}
        \item If $\theta = 0$, then there exists $k_0 > 0$ such that $x^k$ is constant for $k > k_0$;
        \item If $\theta \in (0, \frac{1}{2}]$, then there exist $c_1 > 0, k_1 > 0$, and $\eta\in(0, 1)$
        such that $\|x^k - \tilde{x}\| < c_1\eta^k$ for $k > k_1$;
        \item If $\theta \in (\frac{1}{2}, 1)$, then there exist $c_2 > 0$ and $k_2 > 0$
        such that $\|x^k - \tilde{x}\| < c_{2}k^{-\frac{1-\theta}{2\theta-1}}$ for $k > k_2$.
    \end{enumerate}
    \end{theorem}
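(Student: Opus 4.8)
The plan is to transcribe the proof of Theorem~\ref{theorem:rate-bpdca-g} almost verbatim, replacing the objective $\Psi$ and the single sequence $\{x^k\}$ by the auxiliary function $H_M$ and the lifted sequence $\{z^k := (x^k, x^{k-1})\}$, which is the natural monotone object for the extrapolated scheme (Lemma~\ref{lemma:obj-decrease-ex} and Theorem~\ref{theorem:global-convergence-bpdcae}). Concretely, I would set $R_k := H_M(x^k, x^{k-1}) - \zeta$ and the tail sum $S_k := \sum_{j=k}^{\infty} \|x^{j+1} - x^j\|$, which is finite because $\{x^k\}$ converges with $\sum_k \|x^k - x^{k-1}\| < \infty$ by Theorem~\ref{theorem:global-convergence-bpdcae-g}. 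Since $\|x^k - \tilde{x}\| \leq S_k$ by the triangle inequality, bounding $S_k$ immediately yields the asserted rates on $\|x^k - \tilde{x}\|$.

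Three ingredients drive the argument, each already available from the analysis of BPDCAe. First, a \emph{sufficient-decrease} estimate: from Lemma~\ref{lemma:obj-decrease-ex} together with the $\sigma$-strong convexity of $h$ (Assumption~\ref{ass4} (i)), there is $A_2 > 0$ with $H_M(x^k, x^{k-1}) - H_M(x^{k+1}, x^k) \geq A_2 \|x^{k+1} - x^k\|^2$ for all $k$. Second, a \emph{slope bound}: combining the first-order optimality of subproblem~\eqref{subprob:bpdcae}, the local Lipschitz continuity of $\nabla f_1$ and $\nabla g$ (Assumptions~\ref{ass4} (ii) and~\ref{ass5g}), and the bounded element of $\partial(\nabla h)$ supplied by Assumption~\ref{ass6}, one obtains $A_1 > 0$ with $\dist((0,0), \partial H_M(x^k, x^{k-1})) \leq A_1 (\|x^k - x^{k-1}\| + \|x^{k-1} - x^{k-2}\|)$ for all large $k$; this is the two-variable analogue of~\eqref{ineq:dist-euc}. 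Third, the \emph{\L ojasiewicz inequality}: since $H_M$ is subanalytic and $(\tilde{x}, \tilde{x}) \in \Upsilon$ is the limit of $\{(x^k, x^{k-1})\}$ (Theorem~\ref{theorem:global-convergence-bpdcae} (ii)), applying~\cite[Theorem 3.1]{Bolte2007} at $(\tilde{x}, \tilde{x})$ with the \L ojasiewicz exponent $\theta$ gives $\nu > 0$ with $|H_M(x^k, x^{k-1}) - \zeta|^{\theta} \leq \nu \dist((0,0), \partial H_M(x^k, x^{k-1}))$ for all large $k$.

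Chaining these three inequalities through the concavity of $t \mapsto t^{1-\theta}$ and the arithmetic--geometric mean inequality, exactly as in the passage leading to~\eqref{ineq:sk-2}, produces a recursion $S_k \leq A_3 (S_{k-1} - S_k)^{\frac{1-\theta}{\theta}} + (S_{k-1} - S_k)$ for all large $k$, once the extra $\|x^{k-1} - x^{k-2}\|$ term from the slope bound is absorbed into consecutive differences of $S$. The three assertions then follow by the same case analysis as in Theorem~\ref{theorem:rate-bpdca}: for $\theta = 0$, the slope bound and $\|x^{k+1} - x^k\| \to 0$ (Theorem~\ref{theorem:global-subsequential-conv-bpdcae} (ii)) force $\dist((0,0), \partial H_M(x^k, x^{k-1})) \to 0$, contradicting $\phi'(\cdot) = c$ unless $R_k = 0$ eventually, whence sufficient decrease makes $x^k$ constant; for $\theta \in (0, \frac{1}{2}]$ the exponent $\frac{1-\theta}{\theta} \geq 1$ gives $S_k \leq \frac{A_3 + 1}{A_3 + 2} S_{k-1}$ and the geometric rate; for $\theta \in (\frac{1}{2}, 1)$ one rearranges to $S_k^{\frac{\theta}{1-\theta}} \leq A_4 (S_{k-1} - S_k)$ and invokes~\cite[Theorem 2]{attouch09} to obtain the polynomial rate.

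The main obstacle I anticipate is the slope bound for the two-variable function $H_M$: whereas the BPDCA estimate~\eqref{ineq:dist-euc} only had to control differences of $\nabla h$, here $\partial H_M(x^k, x^{k-1})$ carries a genuine $\partial(\nabla h)$ contribution arising from differentiating $M D_h(x^{k-1}, x^k)$ in its second argument, which is precisely why Assumption~\ref{ass6} is imposed; care is needed to verify that this term is again bounded by a constant times $\|x^k - x^{k-1}\| + \|x^{k-1} - x^{k-2}\|$ so that the recursion closes. Everything else is a faithful, if bookkeeping-heavy, repetition of the BPDCA arguments, as already signalled by the remark that the proof is ``almost identical'' to Theorems~\ref{theorem:rate-bpdca} and~\ref{theorem:rate-bpdca-g}.
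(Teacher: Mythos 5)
Your overall architecture (tail sums $S_k$, recursion via concavity of $t^{1-\theta}$, the three-case analysis, and \cite{attouch09}) is indeed the intended adaptation --- the paper gives no explicit proof and simply declares it almost identical to Theorems~\ref{theorem:rate-bpdca} and~\ref{theorem:rate-bpdca-g}. However, your second and third ingredients contain a genuine gap: you state the slope bound and the \L ojasiewicz inequality for $H_M$ itself, via $\dist((0,0),\partial H_M(x^k,x^{k-1}))$. The two-variable analogue of~\eqref{ineq:dist-euc} that you cite is established in the appendix (proof of Theorem~\ref{theorem:global-convergence-bpdcae}~(i)) only under Assumption~\ref{ass5}, where $f_2$ is $\mathcal{C}^1$ near $\Omega$ and $\nabla f_2(x^{k-1})$ can be substituted into the optimality condition. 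The present theorem assumes Assumption~\ref{ass5g} instead, so $f_2$ is merely convex and possibly nonsmooth, and then the optimality condition of subproblem~\eqref{subprob:bpdcae} does \emph{not} furnish any element of $\partial H_M(x^k,x^{k-1})$: the candidate vector contains $-\xi^k$ with $\xi^k\in\partial_{\mathrm{c}}f_2(x^k)$, but for a nonsmooth convex $f_2$ one has $\partial(-f_2)(x)\subsetneq -\partial_{\mathrm{c}}f_2(x)$ in general (a Fr\'echet subgradient of $-f_2$ exists only at points of differentiability of $f_2$), so $-\xi^k$ need not generate a limiting subgradient of $H_M$. This sign obstruction is precisely why the paper's proofs under Assumption~\ref{ass5g} (Theorems~\ref{theorem:global-convergence-bpdca-g} and~\ref{theorem:global-convergence-bpdcae-g}) work with $-\Psi$ and $-H_M$: since $f_2$ enters $-H_M$ with a plus sign and $f_1$, $g$, $h$ are smooth, the subdifferential $\partial(-H_M)(x^k,x^{k-1})$ is computed exactly, the optimality condition exhibits an explicit member of it, and \cite[Theorem 3.1]{Bolte2007} is applied to the subanalytic function $-H_M$, yielding~\eqref{ineq:he-xk} and hence $|H_M(x^k,x^{k-1})-\zeta|^{\theta}\leq C\left(\|x^k-x^{k+1}\|+\|x^{k-1}-x^k\|\right)$ for all large $k$. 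Your KL-style step for $\theta=0$ (``contradicting $\phi'(\cdot)=c$'') should likewise be rephrased through this inequality.

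Once that substitution is made, the rest of your outline goes through as you describe: the extra neighbor term gives a recursion in the shifted differences $S_{k-2}-S_k$ rather than $S_{k-1}-S_k$, which you correctly flagged and which the case analysis and \cite[Theorem 2]{attouch09} tolerate. One further small imprecision: your clean sufficient decrease $H_M(x^k,x^{k-1})-H_M(x^{k+1},x^k)\geq A_2\|x^{k+1}-x^k\|^2$ does not follow directly from~\eqref{ineq:aux-decrease-ex} at the endpoints $M=\rho/\lambda$ or $M=1/\lambda$, where one of the coefficients $\left(\frac{1}{\lambda}-M\right)$, $\left(M-\frac{\rho}{\lambda}\right)$ vanishes; since their sum equals $\frac{1-\rho}{\lambda}>0$, a sufficient decrease in terms of $\|x^{k+1}-x^k\|^2+\|x^k-x^{k-1}\|^2$ (possibly index-shifted) always survives, which suffices for the shifted recursion but should be stated in that form.
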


    \section{Applications}\label{sec:applications}

    \subsection{Application to Phase Retrieval}\label{subsec:application-to-quadratic-inverse-problems}
    In phase retrieval, we are interested in finding a (parameter) vector $x\in\real^d$ that approximately solves the system,
    \begin{align}
        \label{prob:phase}
        \langle a_r, x \rangle^2 \simeq b_r, \quad r=1, 2,\ldots,m,
    \end{align}
    where the vectors $a_r\in\real^d$ describe the model and $b=(b_1,b_2,\ldots,b_m)^{\mathrm{T}}$ is a vector of (usually) noisy measurements.
    As described in~\cite{bpg,wirtinger}, the system~$\eqref{prob:phase}$ can be formulated as a nonconvex optimization problem:
    \begin{align}
        \label{prob:pr}
        \min \left\{ \Psi(x) := \frac{1}{4}\sum_{r=1}^m\left( \langle a_r, x \rangle^2 - b_r \right)^2 + \theta g(x)\ \middle|\ x \in \real^d \right\},
    \end{align}
    where $\theta \geq 0$ is a trade-off parameter between the data fidelity criteria and the regularizer $g$.
    We define $g:\real^d \to \real$, in particular $g(x) = \|x\|_1$.

    In this case, the underlying space of $(\mathcal{P})$ is $C \equiv \real^d$.
    Define $f:\real^d\to\real$ as $f(x) = \frac{1}{4} \sum_{r=1}^m \left( \langle a_r, x \rangle^2 - b_r \right)^2$, which is
    a nonconvex differentiable function that does not admit a global Lipschitz continuous gradient.
    The objective function of the phase retrieval problem can be also reformulated as a difference between two convex functions such as in~\cite{Huang2018}.
    That is, $f(x) = f_1(x) - f_2(x)$, where
    \begin{align}
        \label{func:dc-real}
        f_1(x) = \frac{1}{4} \sum_{r=1}^m \langle a_r, x \rangle^4 + \frac{1}{4}\|b\|^2 \quad \mathrm{and} \quad
        f_2(x) = \frac{1}{2} \sum_{r=1}^m b_r \langle a_r, x \rangle^2.
    \end{align}
    When we do not regard the phase retrieval~\eqref{prob:pr} as a DC optimization problem,
    the Bregman Proximal Gradient algorithm (BPG) can be used instead~\cite{bpg}.
    Enhancements using the extrapolation technique were proposed:
    the Bregman Proximal Gradient algorithm with extrapolation (BPGe)~\cite{bpge} and
    Convex-Concave Inertial BPG~\cite{Mukkamala2020} for estimating $L$.
    For BPG(e),
    assuming $L$-smad for the pair $(f_1 - f_2, h)$ using $h(x) = \frac{1}{4}\|x\|^4 + \frac{1}{2}\|x\|^2$,
    $L$ satisfies the following inequality~\cite[Lemma 5.1]{bpg}:
    \begin{align}
        \label{ineq:L-bpg}
        L \geq \sum_{r=1}^m \left( 3\|a_r a_r^{\mathrm{T}}\|^2 + \|a_r a_r^{\mathrm{T}}\||b_r| \right).
    \end{align}

    On the other hand, for DC optimization problems,
    we define $h:\real^d\to\real$ as
    \begin{align}
        \label{func:ker-real}
        h(x) = \frac{1}{4}\|x\|^4.
    \end{align}
    This function is simpler than the original nonconvex formulation.
    The function $h(x) = \frac{1}{4} \|x\|^4$ is not $\sigma$-strongly convex.
    Therefore, this function does not satisfy Assumption~\ref{ass4} (i).

    \begin{proposition}
        \label{prop:L-bpdc-sum}
        Let $f_1$ and $h$ be as defined above.
        Then, for any $L$ satisfying
        \begin{align}
            \label{ineq:L-bpdc-s}
            L \geq 3\left\|\sum_{r=1}^m \|a_r\|^2 a_r a_r^{\mathrm{T}}\right\|,
        \end{align}
        the function $Lh-f_1$ is convex on $\real^d$.
        Therefore, the pair $(f_1,h)$ is L-smad on $\real^d$.
    \end{proposition}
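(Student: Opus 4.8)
The plan is to invoke the remark following Definition~\ref{def:l-smad}: since $f_1$ is convex (each summand $\langle a_r,x\rangle^4=(\langle a_r,x\rangle^2)^2$ is a convex nonnegative function composed with the convex nondecreasing map $t\mapsto t^2$), it suffices to establish that $Lh-f_1$ is convex on $\real^d$; the $L$-smad conclusion is then automatic. Both $h$ and $f_1$ are $\mathcal{C}^\infty$, so I would verify convexity through the second-order characterization, namely $L\nabla^2 h(x)-\nabla^2 f_1(x)\succeq0$ for every $x\in\real^d$.

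First I would compute the two Hessians. Writing $h(x)=\frac14(\|x\|^2)^2$ gives $\nabla h(x)=\|x\|^2 x$ and hence
\[
\nabla^2 h(x)=\|x\|^2 I+2xx^{\mathrm T}.
\]
For $f_1$, differentiating $\tfrac14\langle a_r,x\rangle^4$ twice yields $3\langle a_r,x\rangle^2 a_r a_r^{\mathrm T}$, so that
\[
\nabla^2 f_1(x)=3\sum_{r=1}^m\langle a_r,x\rangle^2 a_r a_r^{\mathrm T}.
\]

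The core estimate is then to dominate $\nabla^2 f_1(x)$ by a scalar multiple of $\|x\|^2 I$. Because $2Lxx^{\mathrm T}\succeq0$, it is enough to show $L\|x\|^2 I\succeq\nabla^2 f_1(x)$. By the Cauchy--Schwarz inequality, $\langle a_r,x\rangle^2\le\|a_r\|^2\|x\|^2$, and since each $a_r a_r^{\mathrm T}\succeq0$, multiplying by a larger nonnegative scalar preserves the Löwner order, giving
\[
\nabla^2 f_1(x)\preceq 3\|x\|^2\sum_{r=1}^m\|a_r\|^2 a_r a_r^{\mathrm T}\preceq 3\left\|\sum_{r=1}^m\|a_r\|^2 a_r a_r^{\mathrm T}\right\|\|x\|^2 I,
\]
where the last step uses $M\preceq\|M\|I$ for any symmetric positive semidefinite $M$, with $\|M\|$ its spectral norm (largest eigenvalue). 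Consequently, whenever $L\ge 3\bigl\|\sum_{r}\|a_r\|^2 a_r a_r^{\mathrm T}\bigr\|$ as in~\eqref{ineq:L-bpdc-s}, we obtain $L\|x\|^2 I-\nabla^2 f_1(x)\succeq0$, and adding back $2Lxx^{\mathrm T}\succeq0$ shows $L\nabla^2 h(x)-\nabla^2 f_1(x)\succeq0$ for all $x\in\real^d$. Thus $Lh-f_1$ has a positive semidefinite Hessian everywhere and is therefore convex.

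The computation is essentially routine, so I do not expect a serious obstacle; the only points that require care are correctly identifying the Hessian of $h$ (in particular the rank-one term $2xx^{\mathrm T}$) and discarding $2Lxx^{\mathrm T}$ at the right moment so that the remaining bound reduces cleanly to a comparison of $\|x\|^2 I$ against $\nabla^2 f_1(x)$. One should also note that the whole argument is insensitive to the additive constant $\tfrac14\|b\|^2$ in $f_1$, which drops out upon differentiation.
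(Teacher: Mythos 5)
Your proposal is correct and takes essentially the same route as the paper's proof: identical Hessian computations, the observation $\nabla^2 h(x)\succeq\|x\|^2 I_d$, Cauchy--Schwarz giving $\langle a_r,x\rangle^2\le\|a_r\|^2\|x\|^2$ together with Löwner-order monotonicity, and the spectral-norm bound $M\preceq\|M\|I_d$. The only cosmetic difference is that the paper phrases the final comparison through eigenvalues, $L\lambda_{\min}(\nabla^2 h(x))\geq\lambda_{\max}(\nabla^2 f_1(x))$, whereas you argue directly in the semidefinite order after discarding $2Lxx^{\mathrm{T}}\succeq0$.
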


    \begin{proof}
        Let $x\in\real^d$.
        Since $f_1$ and $h$ are $\mathcal{C}^2$ on $\real^d$,
        to guarantee the convexity of $Lh-f_1$,
        it is sufficient to find $L > 0$ such that $L\lambda_{\min}(\nabla^2 h(x)) \geq \lambda_{\max}(\nabla^2 f_1(x))$,
        where $\lambda_{\min}(M)$ and $\lambda_{\max}(M)$ denote the minimal and maximal eigenvalues of a matrix $M$, respectively.
        Now, we have the Hessian for $f_1$ and $h$:
        \begin{align*}
            \nabla^2 f_1(x)
            = 3 \sum_{r=1}^m \langle a_r, x \rangle^2 a_r a_r^{\mathrm{T}}
            \quad \mathrm{and} \quad
            \nabla^2 h(x) = \|x\|^2 I_d + 2xx^{\mathrm{T}}.
        \end{align*}
        Since $\nabla^2 h(x) \succeq \|x\|^2 I_d$, we obtain $\lambda_{\min}\left(\nabla^2 h(x)\right) \geq \|x\|^2$.
        From the well-known fact, $\lambda_{\max}(M) \leq\|M\|$,
        we have the following inequality:
        \begin{align*}
            \lambda_{\max}\left(\nabla^2 f_1(x)\right)
            \leq 3\left\| \sum_{r=1}^m \langle a_r, x \rangle^2 a_r a_r^{\mathrm{T}} \right\|
            \leq 3\left\| \sum_{r=1}^m\|a_r\|^2 a_r a_r^{\mathrm{T}} \right\|\|x\|^2
            \leq L\|x\|^2 \leq L\lambda_{\min}(\nabla^2 h(x)).
        \end{align*}
        Therefore, we obtain the desired result.
        \qed
    \end{proof}

    Comparing the right hand side of~\eqref{ineq:L-bpg} and that of~\eqref{ineq:L-bpdc-s}, we can see that
    \begin{align}
        \label{ineq:l-smad-compare}
        3\left\|\sum_{r=1}^m \|a_r\|^2 a_r a_r^{\mathrm{T}}\right\|
        \leq \sum_{r=1}^m \left( 3\|a_r a_r^{\mathrm{T}}\|^2 + \|a_r a_r^{\mathrm{T}}\||b_r| \right).
    \end{align}
    The constant $L$ has the important role of defining the step size, and thereby affects the performance of the algorithms.
    Note that even if $\left\|\sum_{r=1}^m \|a_r\|^2 a_r a_r^{\mathrm{T}}\right\| = \sum_{r=1}^m \|a_r a_r^{\mathrm{T}}\|^2$,
    the left-hand side of~\eqref{ineq:l-smad-compare} is always smaller than
    the right-hand side because $\sum_{r=1}^m \|a_r a_r^{\mathrm{T}}\||b_r| \geq 0$.
    When $h(x)=\frac{1}{4}\|x\|^4+\frac{1}{2}\|x\|^2$, the subproblems of BPG(e) have a closed-form solution formula~\cite[Proposition 5.1]{bpg}.
    When $h(x) = \frac{1}{4}\|x\|^4$, subproblems~\eqref{subprob:bpdca} and~\eqref{subprob:bpdcae} also have a closed-form solution formula,
    which is obtained by slightly modifications of those in BPG(e).

    In this application, the functions $f_1, f_2, g$, and $h$ satisfy
    Assumptions from~\ref{ass1} to~\ref{ass6} excepting Assumption~\ref{ass4} (i) and~\ref{ass5g}.
    In particular, Assumption~\ref{ass4} (i) is not satisfied for our choice $h(x)=\frac{1}{4}\|x\|^4$,
    but it is satisfied if we replace it by $h(x)=\frac{1}{4}\|x\|^4+\frac{1}{2}\|x\|^2$.
    Finally, $\Psi$ and $H_M$ are KL functions due to their semi-algebraicity~\cite{attouch09}.
    Therefore, in this application, Assumption~\ref{ass5g} is not required for the global convergence of BPDCAe.

    \subsection{Lower Bound on the $L$-smooth Adaptable Parameter in the Gaussian Model}\label{subsec:the-lower-bound-for-l-smooth-adaptive}
    We dealt with the following Gaussian model.
    We generated the elements of $m$ vectors $a_r \in \real^d$ and
    the ground truth $\tilde{x} \in \real^d$,
    which was a sparse vector (sparsity of 5\%),
    independently from the standard Gaussian distribution.
    Then, we generated $b_r = \langle a_r, \tilde{x} \rangle^2, r = 1,2,\ldots, m$ from $a_r$ and $\tilde{x}$.

    From the linearity of the expectation, we consider the expectation of $\nabla^2 f_1$,
    \begin{align*}
        \mathbb{E}\left[ \nabla^2 f_1(x)\right]
        = 3\sum_{r=1}^m \mathbb{E}\left[\langle a_r, x \rangle^2 a_r a_r^{\mathrm{T}} \right].
    \end{align*}
    Since the elements of $a_r$ are independently generated from the standard Gaussian distribution,
    the $j$-th diagonal element of the above matrix is given by
    \begin{align*}
        \mathbb{E}\left[\langle a_r, x \rangle^2 a_{r, j}^2 \right]
        = \mathbb{E}\left[a_{r, j}^4 x_j^2 + \sum_{k=1, k\neq j}^d a_{r, j}^2 a_{r, k}^2 x_k^2 \right]
        = 3x_j^2 + \sum_{k=1, k\neq j}^d x_k^2
        = 2x_j^2 + \|x\|^2.
    \end{align*}
    The non-diagonal $(j, k)$ elements are
    \begin{align*}
        \mathbb{E}\left[\langle a_r, x \rangle^2 a_{r, j}a_{r, k} \right]
        &= \mathbb{E}\left[2a_{r, j}^2 a_{r, k}^2 x_j x_k \right] = 2 x_j x_k.
    \end{align*}
    Moreover, noting that $h(x) = \frac{1}{4}\|x\|^4$,
    we obtain $\mathbb{E}\left[\langle a_r, x \rangle^2 a_r a_r^{\mathrm{T}} \right] = \|x\|^2 I_d + 2xx^{\mathrm{T}} = \nabla^2 h(x)$.
    The expectation of the Hessian of $f_1(x)$ is thus given by $\mathbb{E}[\nabla^2 f_1(x)]=3m\nabla^2 h(x)$.

    Under the Gaussian model, we can reduce the lower bound of $L$ given in Proposition~\ref{prop:L-bpdc-sum} with high probability by applying~\cite[Lemma 7.4]{wirtinger} as shown in the following proposition.

    \begin{proposition}
        \label{prop:L-bpdc-f}
        Let the functions $f_1$ and $h$ be given by~\eqref{func:dc-real} and~\eqref{func:ker-real}, respectively.
        Moreover, assume that the vectors $a_r$ are independently distributed according to the Gaussian model with a sufficiently large number of measurements.
        Let $\gamma$ and $\delta$ be a fixed positive numerical constant and
        $c(\cdot)$ be a sufficiently large numerical constant that depends on $\delta$;
        this means that the number of samples obeys $m \geq c(\delta) \cdot d\log d$ in the Gaussian model.
        Then, for any L satisfying
        \begin{align}
            \label{ineq:L-bpdc-f}
            L \geq 9 \left\| \sum_{r=1}^m a_r a_r^{\mathrm{T}} \right\| + \delta,
        \end{align}
        the function $Lh-f_1$ is convex on $\real^d$
        and hence the pair $(f_1,h)$ is L-smad on $\real^d$
        with probability at least $1 - 5e^{-\gamma d} - 4 / d^2$.
    \end{proposition}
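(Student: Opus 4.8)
The plan is to follow the same route as in Proposition~\ref{prop:L-bpdc-sum}, but to replace the deterministic eigenvalue estimate of $\nabla^2 f_1$ by a probabilistic one that exploits the Gaussian concentration recorded just above the statement. Since $f_1$ and $h$ are both $\mathcal{C}^2$ on $\real^d$, convexity of $Lh-f_1$ is equivalent to $L\nabla^2 h(x)\succeq\nabla^2 f_1(x)$ for every $x$, and because $\nabla^2 h(x)=\|x\|^2 I_d+2xx^{\mathrm{T}}\succeq\|x\|^2 I_d$ gives $\lambda_{\min}(\nabla^2 h(x))\geq\|x\|^2$, it suffices to establish
\begin{align*}
\lambda_{\max}(\nabla^2 f_1(x))\leq L\|x\|^2,\quad\forall x\in\real^d.
\end{align*}
As both sides are homogeneous of degree two in $x$ (each entry of $\nabla^2 f_1(x)=3\sum_{r=1}^m\langle a_r,x\rangle^2 a_r a_r^{\mathrm{T}}$ is a quadratic form in $x$), and the case $x=0$ is trivial, I would reduce to proving the bound uniformly over the unit sphere $\{\|x\|=1\}$.

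On the unit sphere the target becomes a uniform bound on $\lambda_{\max}\bigl(\sum_{r=1}^m\langle a_r,x\rangle^2 a_r a_r^{\mathrm{T}}\bigr)$. The idea is that this empirical fourth-moment matrix concentrates around its expectation, which we already computed to be $m\,\nabla^2 h(x)=m(\|x\|^2 I_d+2xx^{\mathrm{T}})$; at a unit vector $x$ its largest eigenvalue equals $3m$, so $\lambda_{\max}(\nabla^2 f_1(x))=3\lambda_{\max}\bigl(\sum_r\langle a_r,x\rangle^2 a_r a_r^{\mathrm{T}}\bigr)$ is, in expectation, $9m\|x\|^2$. I would make this rigorous by invoking the uniform concentration of \cite[Lemma 7.4]{wirtinger}, which under the Gaussian model and $m\geq c(\delta)\,d\log d$ controls $\bigl\|\frac1m\sum_r\langle a_r,x\rangle^2 a_r a_r^{\mathrm{T}}-(\|x\|^2 I_d+2xx^{\mathrm{T}})\bigr\|$ simultaneously for all $x$ on a high-probability event. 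Combined with the standard spectral concentration of the Gram matrix $\sum_{r=1}^m a_r a_r^{\mathrm{T}}$ about $mI_d$ — which is precisely what lets me trade the deterministic count $m$ for the data-dependent quantity $\bigl\|\sum_r a_r a_r^{\mathrm{T}}\bigr\|$ appearing in the statement — intersecting the two events yields the claimed probability $1-5e^{-\gamma d}-4/d^2$.

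Putting the estimates together on this event gives $\lambda_{\max}(\nabla^2 f_1(x))\leq\bigl(9\|\sum_r a_r a_r^{\mathrm{T}}\|+\delta\bigr)\|x\|^2$ for all $x$, i.e.\ $L\nabla^2 h(x)-\nabla^2 f_1(x)\succeq 0$, so $Lh-f_1$ is convex and $(f_1,h)$ is $L$-smad on $\real^d$; the factor $9$ is exactly the largest eigenvalue $3$ of $\nabla^2 h$ at a unit vector times the prefactor $3$ in $\nabla^2 f_1$. The main obstacle is the concentration step: passing from a pointwise bound to one that holds uniformly over the whole sphere (which forces the $m\gtrsim d\log d$ sampling requirement through a covering/net argument inside \cite[Lemma 7.4]{wirtinger}), and then carefully combining the fourth-moment deviation with the second-moment deviation of $\sum_r a_r a_r^{\mathrm{T}}$ so that the residual fluctuations are absorbed into the additive slack $\delta$ while the leading term lands exactly on $9\|\sum_r a_r a_r^{\mathrm{T}}\|$. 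Tracking the individual failure probabilities into the stated $1-5e^{-\gamma d}-4/d^2$ is then routine bookkeeping once the two concentration inequalities are in hand.
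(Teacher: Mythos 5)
Your overall route coincides with the paper's proof in all essentials: both reduce convexity of $Lh-f_1$ to the pointwise Hessian comparison $\nabla^2 f_1(x)\preceq L\|x\|^2 I_d\preceq L\nabla^2 h(x)$ using $\nabla^2 h(x)=\|x\|^2 I_d+2xx^{\mathrm{T}}\succeq\|x\|^2 I_d$ exactly as in Proposition~\ref{prop:L-bpdc-sum}; both use the moment computation $\mathbb{E}[\nabla^2 f_1(x)]=3m\,\nabla^2 h(x)$ recorded just before the statement; and both invoke \cite[Lemma 7.4]{wirtinger} as the source of the uniform deviation bound $\nabla^2 f_1(x)\preceq\mathbb{E}[\nabla^2 f_1(x)]+\delta\|x\|^2 I_d$ for all $x$, valid with probability at least $1-5e^{-\gamma d}-4/d^2$ once $m\geq c(\delta)\,d\log d$. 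Up to that point your proposal is sound, including the homogeneity reduction to the unit sphere.

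The gap lies in your final step trading $m$ for the data-dependent quantity $\bigl\|\sum_{r=1}^m a_r a_r^{\mathrm{T}}\bigr\|$. You introduce a second concentration event for the Gram matrix and assert that ``intersecting the two events yields the claimed probability'': this cannot be correct as stated, since \cite[Lemma 7.4]{wirtinger} alone already exhausts the entire failure budget $5e^{-\gamma d}+4/d^2$, and a union bound with any additional nontrivial event strictly exceeds it (at best you could hide the extra term by enlarging $\gamma$, which you do not argue). Quantitatively the step also misfires: two-sided Gram concentration $\bigl\|\frac{1}{m}\sum_r a_r a_r^{\mathrm{T}}-I_d\bigr\|\leq\epsilon$ only yields $9m\leq 9\bigl\|\sum_r a_r a_r^{\mathrm{T}}\bigr\|+9\epsilon m$, and the residual $9\epsilon m$ grows with $m$, so it cannot be ``absorbed into the additive slack $\delta$'' --- in the statement $\delta$ is a fixed constant that comes entirely from the Lemma 7.4 deviation term. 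The paper needs no second event: only the one-sided inequality $m\leq\bigl\|\sum_r a_r a_r^{\mathrm{T}}\bigr\|$ is required, which it obtains directly from $\mathbb{E}\bigl[\sum_r a_r a_r^{\mathrm{T}}\bigr]=mI_d$ together with the operator-norm bound $\sum_r a_r a_r^{\mathrm{T}}\preceq\bigl\|\sum_r a_r a_r^{\mathrm{T}}\bigr\| I_d$ (see~\eqref{l_ineq}), giving $\mathbb{E}[\nabla^2 f_1(x)]\preceq 9\|x\|^2\bigl\|\sum_r a_r a_r^{\mathrm{T}}\bigr\| I_d$ with no extra probabilistic cost and no extra slack, so the final probability is exactly that of Lemma 7.4. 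Replacing your Gram-concentration step by this one-sided bound repairs the argument.
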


    \begin{proof}
        Consider the expectation of $\sum_{r=1}^m a_r a_r^{\mathrm{T}}$.
        Since the elements of $a_r$ are independently generated from the standard Gaussian distribution,
        for any $y\in\real^d$, we have
        \begin{align}
            y^{\mathrm{T}}\mathbb{E}\left[\sum_{r=1}^m a_r a_r^{\mathrm{T}}\right]y
            = \sum_{r=1}^m\mathbb{E}\left[\langle a_r, y \rangle^2\right]
            = \sum_{r=1}^m\sum_{j=1}^d y_j^2
            = \sum_{r=1}^m\|y\|^2. \label{ineq:mean-A}
        \end{align}
        From~\eqref{ineq:mean-A}, for any $y\in\real^d$, we have
        \begin{align}
            y^{\mathrm{T}}\mathbb{E}[\nabla^2 f_1(x)]y
            =3\sum_{r=1}^m\left( \|x\|^2\|y\|^2 + 2\langle x, y \rangle^2 \right)
            \leq 9\|x\|^2\sum_{r=1}^m\|y\|^2
            = 9\|x\|^2 y^{\mathrm{T}}\mathbb{E}\left[\sum_{r=1}^m a_r a_r^{\mathrm{T}}\right]y. \label{ineq:hessian-f1}
        \end{align}
        We can easily find that
        \begin{align*}
            9\sum_{r=1}^m a_r a_r^{\mathrm{T}}
            \preceq 9\left\|\sum_{r=1}^m a_r a_r^{\mathrm{T}}\right\| I_d,
        \end{align*}
        which implies that
        \begin{align}
            9\mathbb{E}\left[\sum_{r=1}^m a_r a_r^{\mathrm{T}}\right]
            \preceq 9\left\|\sum_{r=1}^m a_r a_r^{\mathrm{T}}\right\| I_d. \label{l_ineq}
        \end{align}
        From~\eqref{ineq:hessian-f1} and~\eqref{l_ineq},
        we have
        \begin{align}
            \mathbb{E}[\nabla^2 f_1(x)]
            \preceq 9\|x\|^2\left\|\sum_{r=1}^m a_r a_r^{\mathrm{T}}\right\| I_d \label{ineq:hessian-f1-L}.
        \end{align}
        From~\cite[Lemma 7.4]{wirtinger},~\eqref{ineq:L-bpdc-f}, and~\eqref{ineq:hessian-f1-L},
        we conclude that
        \begin{align}
            \label{ineq:f_1-L}
            \nabla^2 f_1(x)
            \preceq \mathbb{E}[\nabla^2 f_1(x)] + \delta\|x\|^2 I_d
            \preceq L\|x\|^2 I_d
        \end{align}
        with probability at least $1 - 5e^{-\gamma d} - 4 / d^2$.
        From $\nabla^2 h(x) \succeq \|x\|^2 I_d$ and~\eqref{ineq:f_1-L}, we have $\nabla^2 f_1(x)\preceq L\nabla^2 h(x)$,
        which proves that $Lh-f_1$ is convex with probability at least $1 - 5e^{-\gamma d} - 4 / d^2$.
        Therefore, the pair $(f_1,h)$ is $L$-smad on $\real^d$.
        \qed
    \end{proof}
    \begin{remark}
        Since each element of $a_r$ independently follows the standard Gaussian distribution,
        $\|a_r\|^2$ follows the chi-squared distribution with $d$ degrees of freedom.
        Thus, we can show $\|a_r\|^2 \geq 3$ with high probability for sufficiently large $d$.
        It implies that the bound given in Proposition~\ref{prop:L-bpdc-f} is smaller than that given in Proposition~\ref{prop:L-bpdc-sum}.
    \end{remark}

    \subsection{Performance Results for Phase Retrieval with the Gaussian Model}\label{subsec:the-performance-results}
    Here, we summarize the results for the Gaussian model.
    All numerical experiments were performed in Python 3.7 on an iMac with a 3.3 GHz Intel Core i5 Processor and
    8 GB 1867 MHz DDR3 memory.

    First, let us examine the results for Bregman proximal-type algorithms,
    \ie, BPG \cite{bpg}, BPGe~\cite{bpge}, BPDCA (Algorithm 1), and BPDCAe (Algorithm 2).
    We compared the averages of 100 random instances in terms of the number of iterations,
    CPU time, and accuracy (Tables~\ref{tab:non-ex} and~\ref{tab:ex}).
    Let $\hat{x}$ be a recovered solution and
    $\tilde{x}$ be the ground truth generated according to the method described in Subsection~\ref{subsec:the-lower-bound-for-l-smooth-adaptive}.
    In order to compare the objective function values,
    we took the difference $\log_{10}|\Psi(\hat{x}) - \Psi(\tilde{x})|$ to be the accuracy.
    In the numerical experiments, $\Psi(\hat{x}) > \Psi(\tilde{x})$.
    The termination criterion was defined as $\|x^k - x^{k-1}\|/\max\{1, \|x^k\|\} \leq 10^{-6}$.
    The equation numbers under each algorithm in Tables~\ref{tab:non-ex} and~\ref{tab:ex} indicate the value of $\lambda$;
    that is, we set $\lambda = 1/L$ for $L$ satisfying the equations.
    For the restart schemes, we used the adaptive restart scheme with $\rho = 0.99$ and
    the fixed restart scheme with $K=200$.
    We set $\theta = 1$ for the regularizer $g$ in~\eqref{prob:pr}.
    We forcibly stopped the algorithms when they reached the maximum number of iterations (50,000).
    Table~\ref{tab:ex} compares the results of BPGe and BPDCAe under the same settings as the results in Table~\ref{tab:non-ex}.
    BPDCA with~\eqref{ineq:L-bpdc-f} was the fastest among the algorithms without extrapolation (Table~\ref{tab:non-ex}).
    On the other hand, the extrapolation method makes each algorithm faster (Table~\ref{tab:ex}).

    \begin{table}[!htbp]
        \centering
        \caption[Performance tests comparing BPG~\cite{bpg} and BPDCA]{
            Average number of iterations, CPU time,
            and accuracy for BPG~\cite{bpg} and BPDCA using 100 random instances of phase retrieval
            (over the Gaussian model) for different values of $L$.
        }
        \label{tab:non-ex}
        \begin{tabular}{|c|r|r|r|r|r|}
            \hline
            Algorithm               & $m$    & $d$ & Iteration & CPU-Time (s) & Accuracy \\ \hline \hline
            BPG~\cite{bpg}                     & 10,000 & 10  & 3,757     & 1.638  & 2.901    \\ \cline{3-6}
            ~\eqref{ineq:L-bpg}     &        & 50  & 50,000    & 37.761 & 1.977    \\ \cline{3-6}
            &        & 100 & 50,000    & 46.920 & 5.312    \\ \cline{3-6}
            &        & 200 & 50,000    & 91.925 & 7.737    \\ \cline{2-6}
            & 20,000 & 10  & 3,689     & 2.539  & $-$2.569 \\ \cline{3-6}
            &        & 50  & 50,000    & 76.020& 2.007    \\ \cline{3-6}
            &        & 100 & 50,000    & 121.966& 5.523    \\ \cline{3-6}
            &        & 200 & 50,000    & 191.780& 8.057    \\ \cline{2-6}
            & 30,000 & 10  & 3,764     & 3.698  & $-$2.387 \\ \cline{3-6}
            &        & 50  & 50,000    & 104.947& 2.257    \\ \cline{3-6}
            &        & 100 & 50,000    & 175.143& 5.678    \\ \cline{3-6}
            &        & 200 & 50,000    & 287.735& 8.227    \\ \cline{1-6}
            BPDCA                   & 10,000 & 10  & 265       & 0.102  & $-$4.374 \\ \cline{3-6}
            ~\eqref{ineq:L-bpdc-s}  &        & 50  & 1,415	   & 0.520  & $-$3.212 \\ \cline{3-6}
            &        & 100 & 3,274	   & 2.129  & $-$2.656 \\ \cline{3-6}
            &        & 200 & 8,111	   & 10.416 & $-$2.061 \\ \cline{2-6}
            & 20,000 & 10  & 255	   & 0.157  & $-$4.350 \\ \cline{3-6}
            &        & 50  & 1,299	   & 1.182  & $-$3.193 \\ \cline{3-6}
            &        & 100 & 2,833	   & 4.283  & $-$2.642 \\ \cline{3-6}
            &        & 200 & 6,572	   & 18.198 & $-$2.057 \\ \cline{2-6}
            & 30,000 & 10  & 256	   & 0.233  & $-$4.335 \\ \cline{3-6}
            &        & 50  & 1,257 	   & 1.790  & $-$3.156 \\ \cline{3-6}
            &        & 100 & 2,696	   & 6.484  & $-$2.596 \\ \cline{3-6}
            &        & 200 & 6,012     & 25.666 & $-$2.010 \\ \cline{1-6}
            BPDCA                   & 10,000 & 10  & 68	       & 0.025 & $-$5.127 \\ \cline{3-6}
            ~\eqref{ineq:L-bpdc-f}  &        & 50  & 92	       & 0.034 & $-$4.627 \\ \cline{3-6}
            &        & 100 & 115	   & 0.075 & $-$4.380 \\ \cline{3-6}
            &        & 200 & 152	   & 0.192  & $-$4.108 \\ \cline{2-6}
            & 20,000 & 10  & 65	       & 0.040 & $-$5.137 \\ \cline{3-6}
            &        & 50  & 84	       & 0.077 & $-$4.691 \\ \cline{3-6}
            &        & 100 & 98	       & 0.149  & $-$4.476 \\ \cline{3-6}
            &        & 200 & 121	   & 0.335  & $-$4.229 \\ \cline{2-6}
            & 30,000 & 10  & 65	       & 0.059 & $-$5.166 \\ \cline{3-6}
            &        & 50  & 81	       & 0.115  & $-$4.728 \\ \cline{3-6}
            &        & 100 & 93	       & 0.223  & $-$4.515 \\ \cline{3-6}
            &        & 200 & 110	   & 0.465  & $-$4.285 \\ \cline{1-6}
        \end{tabular}
    \end{table}
    \begin{table}
        \centering
        \caption[Performance tests comparing BPGe~\cite{bpge} and BPDCAe]{
            Average number of iterations, CPU time,
            and accuracy for BPGe~\cite{bpge} and BPDCAe using 100 random instances of phase retrieval
            (over the Gaussian model) for different values of $L$.
        }
        \label{tab:ex}
        \begin{tabular}{|c|r|r|r|r|r|}
            \hline
            Algorithm               & $m$   & $d$ & Iteration & CPU-Time (s)& Accuracy \\ \hline \hline
            BPGe~\cite{bpge}                    & 10,000 & 10  & 297	  & 0.124 & $-$3.904 \\ \cline{3-6}
            ~\eqref{ineq:L-bpg}     &        & 50  & 2,614	  & 1.209 & $-$0.428 \\ \cline{3-6}
            &        & 100 & 6,214    &	5.949 & 0.974    \\ \cline{3-6}
            &        & 200 & 23,940   & 44.218& 2.426    \\ \cline{2-6}
            & 20,000 & 10  & 285	  & 0.198 & $-$3.653 \\ \cline{3-6}
            &        & 50  & 1,941    & 2.871 & $-$0.375 \\ \cline{3-6}
            &        & 100 & 6,054    & 15.376& 1.250    \\ \cline{3-6}
            &        & 200 & 21,138   & 82.086& 2.734    \\ \cline{2-6}
            & 30,000 & 10  & 294	  & 0.290 & $-$3.362 \\ \cline{3-6}
            &        & 50  & 1,880    &	3.826 & $-$0.199 \\ \cline{3-6}
            &        & 100 & 6,002    &	21.271& 1.411    \\ \cline{3-6}
            &        & 200 & 21,434   &	123.504& 2.806    \\ \cline{1-6}
            BPDCAe                  & 10,000 & 10  & 67	      & 0.025 & $-$5.205 \\ \cline{3-6}
            ~\eqref{ineq:L-bpdc-s}  &        & 50  & 203	  & 0.075 & $-$3.802 \\ \cline{3-6}
            &        & 100 & 332      & 0.218 & $-$3.451 \\ \cline{3-6}
            &        & 200 & 581      & 0.740 & $-$2.941 \\ \cline{2-6}
            & 20,000 & 10  & 62	      & 0.038 & $-$5.071 \\ \cline{3-6}
            &        & 50  & 179	  & 0.165 & $-$4.152 \\ \cline{3-6}
            &        & 100 & 302      & 0.458 & $-$3.694 \\ \cline{3-6}
            &        & 200 & 501	  & 1.394 & $-$3.110 \\ \cline{2-6}
            & 30,000 & 10  & 59	      & 0.054 & $-$4.852 \\ \cline{3-6}
            &        & 50  & 169	  & 0.242 & $-$4.054 \\ \cline{3-6}
            &        & 100 & 278	  & 0.670 & $-$3.448 \\ \cline{3-6}
            &        & 200 & 446	  & 1.891 & $-$2.987 \\ \cline{1-6}
            BPDCAe                  & 10,000 & 10  & 32	      & 0.013 & $-$5.649 \\ \cline{3-6}
            ~\eqref{ineq:L-bpdc-f}  &        & 50  & 42	      & 0.015 & $-$5.371 \\ \cline{3-6}
            &        & 100 & 49	      & 0.032 & $-$5.087 \\ \cline{3-6}
            &        & 200 & 61	      & 0.078 & $-$5.135 \\ \cline{2-6}
            & 20,000 & 10  & 29	      & 0.018 & $-$5.550 \\ \cline{3-6}
            &        & 50  & 38	      & 0.035 & $-$5.317 \\ \cline{3-6}
            &        & 100 & 43	      & 0.065 & $-$4.919 \\ \cline{3-6}
            &        & 200 & 52	      & 0.144 & $-$5.051 \\ \cline{2-6}
            & 30,000 & 10  & 29	      & 0.026 & $-$5.558 \\ \cline{3-6}
            &        & 50  & 38	      & 0.056 & $-$5.446 \\ \cline{3-6}
            &        & 100 & 41	      & 0.098 & $-$4.908 \\ \cline{3-6}
            &        & 200 & 50	      & 0.210 & $-$5.115 \\ \cline{1-6}
        \end{tabular}
    \end{table}
    \begin{figure}[!htbp]
        \begin{center}
            \includegraphics[width=0.7\textwidth]{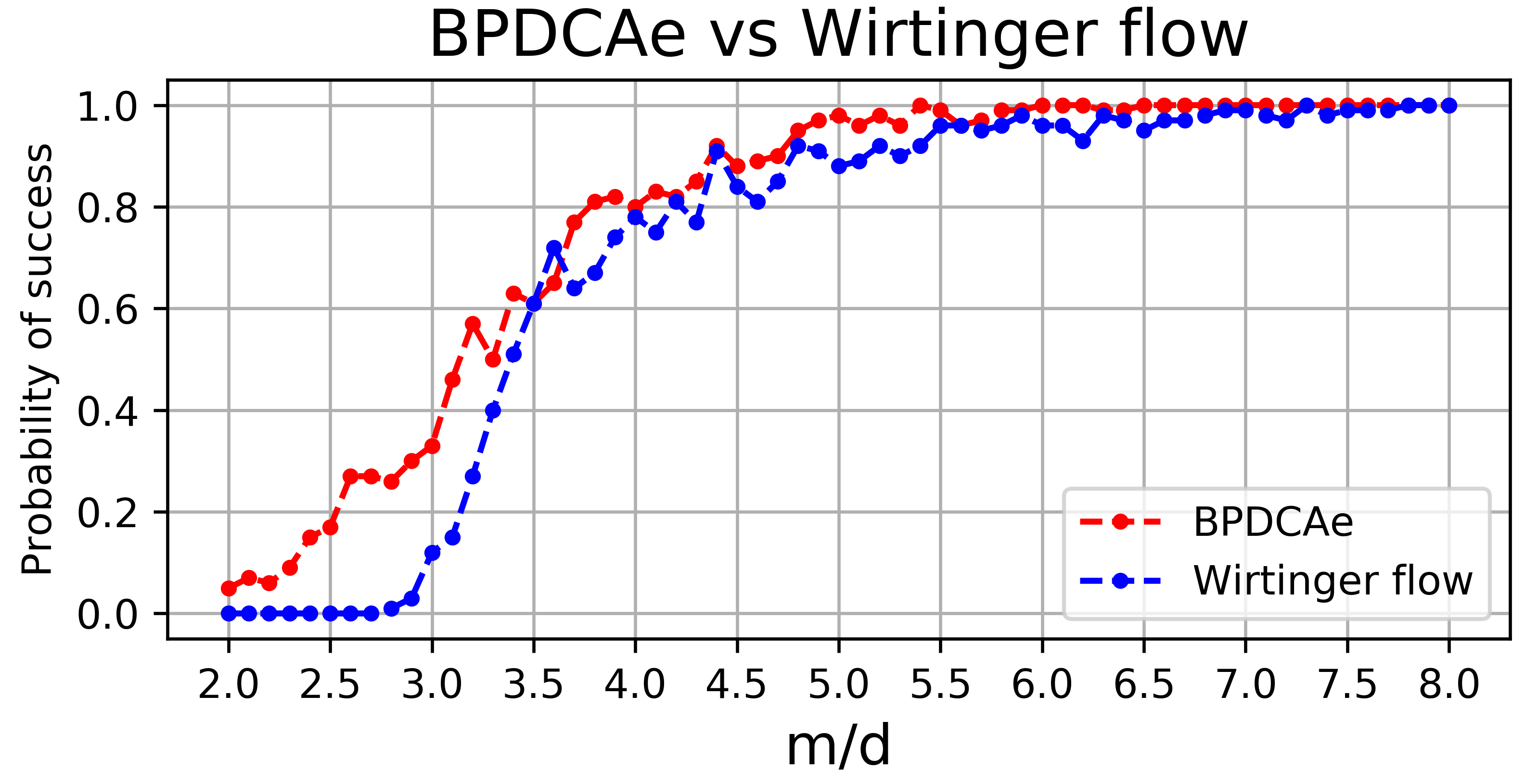}
        \end{center}
        \caption{Empirical probability of success
        based on 100 trials for BPDCAe and Wirtinger flow~\cite{wirtinger}
        using the same initialization step (of the Wirtinger flow).
        We set $d = 128$ and varied the number $m$ of measurements.}
        \label{fig:prob}
    \end{figure}

    We can conclude that, at least for phase retrieval,
    BPDCA has a clear advantage over BPG
    because of its reformulation as a nonconvex DC optimization problem~\eqref{func:dc-real},
    which permits choosing a smaller $L$ in~\eqref{ineq:L-bpdc-s} instead of~\eqref{ineq:L-bpg}.
    In particular, for the Gaussian model,
    we can use a smaller $L$ in ~\eqref{ineq:L-bpdc-f} with high probability.
    The extrapolation technique can further enhance performance.
    Also, we can see that the iterates of BPDCA(e) globally converge to their optimal solutions
    despite that the kernel generating distance $h$~\eqref{func:ker-real} does not satisfy Assumption~\ref{ass4} (i).
    This suggests that this condition may be relaxed in some cases.

    Next, we compared the empirical probability of success for BPDCAe and Wirtinger flow~\cite{wirtinger},
    which is a well-known algorithm for phase retrieval.
    Here we took $x_0$ in BPDCAe to be the value calculated in the initialization step of the Wirtinger flow.
    The empirical probability of success in Fig.~\ref{fig:prob} is an average over 100 trials.
    We regard that the algorithms succeeded if the relative error $\|\hat{x} - \tilde{x}\|/\|\tilde{x}\|$
    falls below $10^{-5}$ after 2,500 iterations.
    The dimension $d$ was fixed at 128, and we varied the number of measurements $m$.
    We used the adaptive restart scheme with $\rho = 0.99$ and the fixed restart scheme with $K=200$.
    We set $\theta = 0$; \ie, we solved~\eqref{prob:pr} without its regularizer.
    From the figure, we can see that BPDCAe with the initialization step of the Wirtinger flow
    achieved almost 100\% success rate when $m/d \geq 6$ and obtained more stable results than those of Wirtinger flow.

    \section{Conclusions}\label{sec:conclusions}
    We proposed two Bregman proximal-type algorithms for solving DC optimization problems $(\mathcal{P})$.
    One is the Bregman Proximal DC Algorithm (BPDCA), the other is BPDCA with extrapolation (BPDCAe).
    Proximal-type algorithms including ours are effective on large-scale problems.
    In addition, our algorithms assume
    that the function $f_1$ has the $L$-smooth adaptable property
    in relation to the kernel generating distance $h$, instead of $L$-smoothness.
    The restart condition for our adaptive restart scheme is different from the existing ones.

    We conducted convergence analyses of our algorithms.
    Assuming the Kurdyka-\L ojasiewicz property or subanalyticity of the objective function
    together with some standard assumptions,
    we established that the iterates generated by BPDCA(e) globally converge to a limiting stationary point or a limiting critical point
    and derived their convergence rates.

    We applied our algorithms to phase retrieval.
    The numerical experiments demonstrated that BPDCAe is faster than the other Bregman-type algorithms.
    For the Gaussian model, BPDCAe offered more stable results than Wirtinger flow~\cite{wirtinger}.
    We conclude that BPDCAe is a powerful method for solving large-scale and structured DC optimization problems.
    Although the kernel generating distance $h$~\eqref{func:ker-real} does not satisfy Assumption~\ref{ass4} (i),
    the sequences generated by BPDCA(e) converged in the numerical experiments.
    Therefore, we conjecture that most of the convergent results can be demonstrated under weaker conditions.
    As future work, since $g$ in BPDCA does not need to be convex,
    we will attempt to prove the monotonicity of the auxiliary function of BPDCAe (Lemma~\ref{lemma:obj-decrease-ex})
    without assuming Assumption~\ref{ass:g-convex}.

    Other Bregman proximal-type algorithms have been proposed.
    Mukkamala \etal~\cite{Mukkamala2020} chose the $L$-smad parameters by using a line search.
    As this parameter is generally difficult to estimate accurately, we can utilize this line search in our algorithms.

    For constrained problems,
    Wang \etal~\cite{NIPS2014_5612} proposed the Bregman alternating direction methods with multipliers.
    Tu \etal~\cite{Tu2020} also developed a Bregman-type algorithm for solving linearly constrained DC optimization problems.
    These variational methods may inspire further improvements and extensions.

    \section*{Acknowledgements}
    M. F. was supported by a JSPS KAKENHI Grant Number
    JP18K11178, from the Japan Society for the Promotion of Science (JSPS) and
    grants 2020/04585-7 and 2018/24293-0 from the S\~ao Paulo Research Foundation (FAPESP).
    M. T. was supported by a JSPS KAKENHI Grant Number
    JP19K15247, from the Japan Society for the Promotion of Science (JSPS).

    \section*{Data availability statement}
    The datasets generated during and/or analysed during the current study are available in the Github repository,
    https://github.com/ShotaTakahashi/bregman-proximal-dc-algorithm.

    \bibliographystyle{plain}
    \bibliography{main}

    \setcounter{section}{0}
    \renewcommand{\thesection}{\Alph{section}}

    \section{Appendix: Proof of Convergence Theorems for BPDCAe}\label{sec:appendix}
    \subsection{Proof of Theorem~\ref{theorem:global-subsequential-conv-bpdcae}}\label{subsec:proof-ofcref}
    (i)
    Since $H_M(x^k, x^{k-1}) \leq H_M(x^0,x^{-1})$ for all $k \in \mathbb{N}$
    from Proposition~\ref{prop:bpdcae-property} (i),
    with $x^0 = x^{-1}$, we obtain
    \begin{align*}
        \Psi(x^k) \leq \Psi(x^k) + MD_h(x^{k-1}, x^k) = H_M(x^k, x^{k-1}) \leq H_M(x^0,x^{-1}) = \Psi(x^0),
    \end{align*}
    which shows that $\{x^k\}_{k=0}^{\infty}$ is bounded
    due to Assumption~\ref{ass4} (iii).

    (ii)
    From~\eqref{ineq:aux-decrease-ex}, we obtain
    \begin{align*}
        H_M(x^k, x^{k-1}) - H_M(x^{k+1}, x^{k})
        \geq{}& \left(\frac{1}{\lambda} - M \right)D_h(x^k, x^{k+1})
        + \left( M - \frac{\rho}{\lambda} \right) D_h(x^{k-1}, x^k)\\
        &+\left(\frac{1}{\lambda}-L\right)D_h(x^{k+1},y^k)\\
        \geq{}&\frac{\sigma(1-\lambda L)}{2\lambda}\left(\|x^{k+1}-x^k\|^2-\beta_k\|x^k-x^{k-1}\|^2\right),
    \end{align*}
    where the last inequality holds because $h$ is a $\sigma$-strongly convex function and
    the first two terms are nonnegative.
    Summing the above inequality from $k=0$ to $\infty$, we obtain
    \begin{align*}
        &\frac{\sigma(1-\lambda L)}{2\lambda}\left(\sum_{k=0}^{\infty}(1-\beta_{k+1})\|x^{k+1}-x^k\|^2-\beta_1\|x^0-x^1\|^2\right)\\
        \leq{}& H_M(x^0, x^{-1}) - \liminf_{n\to\infty}H_M(x^{n+1}, x^{n})\\
        ={}&\Psi(x^0) - \liminf_{n\to\infty}\left( \Psi(x^{n+1}) + MD_h(x^n, x^{n+1}) \right)\\
        \leq{}&\Psi(x^0) - v(\mathcal{P}) < \infty,
    \end{align*}
    which shows that $\lim_{k\to\infty}\|x^{k+1} - x^k\| = 0$ due to $\frac{1}{\lambda}-L>0$ and $\sup_{k>0}\beta_k<1$.

    (iii)
    Let $\tilde{x}$ be an accumulation point of $\{x^k\}_{k=0}^{\infty}$
    and let $\{x^{k_j}\}$ be a subsequence such that $\lim_{j\to\infty}x^{k_j} = \tilde{x}$.
    Then, from the first-order optimality condition of subproblem~\eqref{subprob:bpdcae}
    under Assumption~\ref{ass2-ex}, we have
    \begin{align*}
        0 \in \partial_{\mathrm{c}} g(x^{k_j+1}) + \nabla f_1(y^{k_j}) - \xi^{k_j} + \frac{1}{\lambda}\left(\nabla h(x^{k_j + 1}) - \nabla h(y^{k_j}) \right).
    \end{align*}
    Therefore,
    we obtain
    \begin{align}
        \label{con:1st-k_j-ex}
        \xi^{k_j} + \nabla f_1(x^{k_j+1}) - \nabla f_1(y^{k_j}) + \frac{1}{\lambda}\left(\nabla h(y^{k_j}) - \nabla h(x^{k_j + 1}) \right) \in \partial_{\mathrm{c}} g(x^{k_j+1}) + \nabla f_1(x^{k_j+1}).
    \end{align}
    From the boundedness of $\{x^{k_j}\}$ and the Lipschitz continuity of $\nabla h$ and $\nabla f_1$ on a bounded subset of $\real^d$,
    there exists $A_0 > 0$ such that
    \begin{align*}
        \left\| \nabla f_1(x^{k_j+1}) - \nabla f_1(y^{k_j}) + \frac{1}{\lambda}\left( \nabla h(y^{k_j}) - \nabla h(x^{k_j + 1})\right)\right\|
        &\leq A_0\| x^{k_j+1} - y^{k_j} \|.
%        &\leq A_0\left( \| x^{k_j+1} - x^{k_j} \| + \| x^{k_j} - x^{k_j-1} \| \right).
    \end{align*}
    Therefore,
    using $\| x^{k_j+1} - x^{k_j} \|\to0$ and $\|x^{k_j} - x^{k_j-1}\|\to0$,
    we obtain
    \begin{align}
        \label{lim:1st-con-ex}
        \nabla f_1(x^{k_j+1}) - \nabla f_1(y^{k_j}) + \frac{1}{\lambda}\left( \nabla h(y^{k_j}) - \nabla h(x^{k_j + 1})\right) \to 0.
    \end{align}
    Note that the sequence $\{\xi^{k_j}\}$ is bounded as shown in Theorem~\ref{theorem:global-subsequential-conv-bpdca} (ii),
    and the sequence $\{x^{k_j}\}$ is bounded and converges to $\tilde{x}$.
    Thus, by taking the limit as $j\to\infty$ or more precisely, its subsequence,
    we can assume without loss of generality
    that $\lim_{j\to\infty}\xi^{k_j} =: \tilde{\xi}$ exists, which belongs to $\partial_{\mathrm{c}} f_2(\tilde{x})$
    since $f_2$ is continuous.
    Using this and~\eqref{lim:1st-con-ex},
    we take the limit of~\eqref{con:1st-k_j-ex}.
    Invoking $\|x^{k_j+1} - x^{k_j}\| \to 0$
    and the continuity of $g$ and $\nabla f_1$,
    we obtain
    $\tilde{\xi} \in \partial_{\mathrm{c}} g(\tilde{x})+\nabla f_1(\tilde{x})$.
    Therefore,
    $0 \in \partial_{\mathrm{c}} g(\tilde{x}) + \nabla f_1(\tilde{x}) - \partial_{\mathrm{c}} f_2(\tilde{x})$,
    which shows that $\tilde{x}$ is a limiting critical point of $(\mathcal{P})$.
    \qed

    \subsection{Proof of Proposition~\ref{prop:zeta-ex}}\label{subsec:proof-of-prop}
    (i)
    From Assumption~\ref{ass1} (v) and Proposition~\ref{prop:bpdcae-property} (i),
    the sequence $\{H_M(x^k, x^{k-1})\}_{k=0}^{\infty}$ is bounded from below and non-increasing.
    Consequently, using $\lim_{k\to\infty}D_h(x^{k-1}, x^k) = 0$ from Proposition~\ref{prop:bpdcae-property} (ii),
    we obtain
    $\lim_{k\to\infty}H_M(x^k, x^{k-1}) = \lim_{k\to\infty}\Psi(x^k)=:\zeta$.

    (ii)
    Take any $\hat{x}\in\Omega$, that is $\lim_{j\to\infty}x^{k_j}=\hat{x}$.
    From~\eqref{subprob:bpdcae}, it follows that
    \begin{align*}
        g(x^{k})&
        + \langle \nabla f_1(y^{k-1}) - \xi^{k-1}, x^{k} - y^{k-1}\rangle
        + \frac{1}{\lambda}D_h(x^{k}, y^{k-1})\\
        &{}\leq g(\hat{x}) + \langle \nabla f_1(y^{k-1}) - \xi^{k-1}, \hat{x} - y^{k-1}\rangle
        + \frac{1}{\lambda}D_h(\hat{x}, y^{k-1}).
    \end{align*}
    From the above inequality and the fact that $f_1$ is convex at $x^k$, we obtain
    \begin{align}
        g(x^{k}) + f_1(x^k) \leq{}& g(\hat{x})
        + \langle \nabla f_1(y^{k-1}) - \xi^{k-1}, \hat{x} - x^{k}\rangle
        + \frac{1}{\lambda}D_h(\hat{x}, y^{k-1}) - \frac{1}{\lambda}D_h(x^{k}, y^{k-1})\nonumber\\
        &+ f_1(\hat{x}) + \langle \nabla f_1(x^k), x^k-\hat{x}\rangle\nonumber\\
        \leq{}& g(\hat{x})
        + \langle \nabla f_1(y^{k-1}) - \xi^{k-1}, \hat{x} - x^{k}\rangle
        + \frac{1}{\lambda}D_h(\hat{x}, y^{k-1}) + \frac{1}{\lambda}D_h(y^{k-1},\hat{x})\nonumber\\
        &+ f_1(\hat{x}) + \langle \nabla f_1(x^k), x^k-\hat{x}\rangle,\label{ineq:apendix-sup-g}
    \end{align}
    where the second inequality comes from $-\frac{1}{\lambda}D_h(x^{k}, y^{k-1}) \leq 0$
    and $\frac{1}{\lambda} D_h(y^{k-1}, \hat{x}) \geq 0$.
    Since $\nabla h$ is continuous, we have
    \begin{align*}
        \lim_{j\to\infty} \left( D_h(\hat{x}, y^{k_j-1}) + D_h(y^{k_j-1}, \hat{x}) \right)
        \leq \lim_{j\to\infty}\|\nabla h(y^{k_j-1}) - \nabla h(\hat{x})\|\|y^{k_j-1} - \hat{x}\|
        = 0.
    \end{align*}
    Substituting $k_j$ for $k$ in~\eqref{ineq:apendix-sup-g} and limiting $j$ to $\infty$,
    we have, from Proposition~\ref{prop:bpdcae-property} (ii),
    \begin{align*}
        \limsup_{j\to\infty} \left( g(x^{k_j}) + f_1(x^{k_j}) \right)\leq g(\hat{x}) + f_1(\hat{x}),
    \end{align*}
    which provides $\limsup_{j\to\infty}\Psi(x^{k_j})\leq\Psi(\hat{x})$ from the continuity of $ - f_2$.
    Combining this and the lower semicontinuity of $\Psi$ yields $\Psi(x^{k_j})\to\Psi(\hat{x})=:\zeta$ as $j\to\infty$.
    Since $\hat{x}\in\Omega$ is arbitrary, we conclude that $\Psi \equiv \zeta$ on $\Omega$.\qed

    \subsection{Proof of Theorem~\ref{theorem:global-convergence-bpdcae}}\label{subsec:proof-ofcref2}
    (i)
    Let $\mu>0, k_0>0$, $\mathcal{N}_0$, and
    $\mathcal{N}:=\{ x\in\mathcal{N}_0\ |\ \dist(x, \Omega)<\mu\}$
    as defined in the proof of Theorem~\ref{theorem:global-convergence-bpdca} (i).

    We begin by considering the subdifferential of $H_M$ at $x^k$ for $k \geq k_0 + 1$, and obtain
    \begin{align}
        \label{partial-h}
        \partial H_M(x^k, x^{k-1}) = \nabla f_1(x^k) - \nabla f_2(x^k) + \partial_{\mathrm{c}} g(x^k) - M\partial (\nabla h(x^k)) (x^{k-1} - x^k).
    \end{align}
    Moreover, considering the first-order optimality condition of subproblem~\eqref{subprob:bpdcae},
    for any $k \geq k_0 + 1$, we have
    \begin{align*}
        \frac{1}{\lambda}\left(\nabla h(y^{k-1}) - \nabla h(x^k) \right) - \nabla f_1(y^{k-1}) + \nabla f_2(x^{k-1}) \in \partial_{\mathrm{c}} g(x^k),
    \end{align*}
    since $f_2$ is $\mathcal{C}^1$ on $\mathcal{N}$ and $x^{k-1}\in\mathcal{N}$ whenever $k \geq k_0 + 1$.
    Using the above relation and~\eqref{partial-h}, for a bounded $U^k \in \partial(\nabla h (x^k))$ which exists by Assumption~\ref{ass6}, we also obtain
    \begin{align*}
        \frac{1}{\lambda}\left(\nabla h(y^{k-1}) - \nabla h(x^k) \right)
        &+ \nabla f_1(x^k) - \nabla f_1(y^{k-1})\\
        &+ \nabla f_2(x^{k-1}) - \nabla f_2(x^k)
        + M U^k(x^k - x^{k-1}) \in \partial H_M(x^k, x^{k-1}).
    \end{align*}
    Due to the global Lipschitz continuity of $\nabla f_1, \nabla f_2$, and $\nabla h$ on $\mathcal{N}_0$,
    and the boundedness of $U^k$ from Assumption~\ref{ass6},
    we see that there exist $A_0 > 0$, $A_1 > 0$, and $A_2 > 0$ such that
    \begin{align*}
        \dist((0, 0), \partial H_M(x^k, x^{k-1})) &\leq A{_0}\|x^k - y^{k-1}\| + A_1\|x^k - x^{k-1}\|\nonumber\\
        &\leq A_2\left(\|x^k - x^{k-1}\| + \|x^{k-1} - x^{k-2}\|\right),
    \end{align*}
    where $k \geq k_0 + 1$.
    Since $\|x^k - x^{k-1}\| \to 0$ and $\|x^{k-1} - x^{k-2}\| \to 0$,
    we conclude the claim (i).

    (ii)
    Suppose that $\hat{x}\in\Omega$, $x^{k_j}\rightarrow \hat{x}$, and $x^{k_j-1}\rightarrow \hat{x}$ as in Proposition~\ref{prop:zeta-ex} (ii).
    Therefore, the set of accumulation points of $\{(x^k,x^{k-1})\}_{k=0}^{\infty}$ is $\Upsilon$.
    From Propositions~\ref{prop:bpdcae-property} and~\ref{prop:zeta-ex},
    \begin{align*}
        \lim_{k\to\infty} H_M(x^k,x^{k-1}) = \lim_{k\to\infty} \Psi(x^k)+M\lim_{k\to\infty} D_h(x^{k-1},x^k) = \zeta.
    \end{align*}
    Additionally, from Proposition~\ref{prop:zeta-ex} (ii), for any $(\hat{x}, \hat{x}) \in \Upsilon, \hat{x}\in\Omega$,
    we have $H_M(\hat{x}, \hat{x}) = \Psi(\hat{x}) = \zeta$.
    Since $\hat{x}$ is arbitrary, we conclude that $H_M \equiv \zeta$ on $\Upsilon$.

    (iii)
    The proof is similar to Theorem~\ref{theorem:global-convergence-bpdca} (ii).\qed

    \subsection{Proof of Theorem~\ref{theorem:global-convergence-bpdcae-g}}\label{subsec:proof-ofcref3}
    Let $k_1, \kappa_i, \nu_i$, and $\theta_i$ be defined similarly to the proof of Theorem~\ref{theorem:global-convergence-bpdca-g}.
    Using the differentiability of $g$ and~\cite[Theorem 3.1]{Bolte2007}\nocite{Bolte2007},
    we have
    \begin{align}
        &\left\|\nabla g(x^k) - \nabla g(x^{k+1})\right\|\leq\kappa\|x^k-x^{k+1}\|,\label{ineq:ge-xk} \\
        &|H_M(x^k, x^{k-1}) - \zeta|^{\theta} \leq \nu \|\hat{x}^k\|,\quad \hat{x}^k\in\partial(-H)(x^k, x^{k-1}), \quad \forall k \geq k_1+1,\label{ineq:he-xk}
    \end{align}
    where $\zeta = H_M(\tilde{x}, \tilde{x}) = \Psi(\tilde{x}), \tilde{x}\in\Omega$, $\kappa = \max_{j=1,\ldots,p}\kappa_i, \nu = \max_{j=1,\ldots,p}\nu_i$, and $\theta = \max_{j=1,\ldots,p}\theta_i$.
    From~\eqref{subprob:bpdcae}, we obtain
    \begin{align*}
        0 = \nabla g(x^{k+1}) + \nabla f_1(y^{k}) - \xi^{k} + \frac{1}{\lambda}\left(\nabla h(x^{k + 1}) - \nabla h(y^{k}) \right),
    \end{align*}
    which implies
    \begin{align*}
        &\nabla g(x^{k+1}) - \nabla g(x^k) + \nabla f_1(y^k) - \nabla f_1(x^k) + \frac{1}{\lambda}\left(\nabla h(x^{k + 1}) - \nabla h(y^{k}) \right) + MU^k(x^{k-1} - x^{k})\\
        ={}& \xi^k + MU^k(x^{k-1} - x^{k}) - \nabla f_1(x^{k}) - \nabla g(x^k)
        \in \partial(-H_M)(x^k, x^{k-1}),
    \end{align*}
    for some bounded $U^k \in \partial(\nabla h (x^k))$ and $\partial(-H_M)(x^k, x^{k-1}) = \partial_{\mathrm{c}} f_2(x^k) + M\partial(\nabla h(x^k))(x^{k-1} - x^{k}) - \nabla f_1(x^k) - \nabla g(x^k)$.
    Using~\eqref{ineq:ge-xk},~\eqref{ineq:he-xk},
    Assumption~\ref{ass4}, and
    the boundedness of $\partial (\nabla h(x^k))$ from Assumption~\ref{ass6},
    we obtain $C > 0$ such that
    \begin{align*}
        &|H_M(x^k, x^{k-1}) - \zeta|^{\theta}\\
        \leq{}& \nu\left\|\nabla g(x^{k+1}) - \nabla g(x^k) + \nabla f_1(y^k) - \nabla f_1(x^k) + \frac{1}{\lambda}\left(\nabla h(x^{k + 1}) - \nabla h(y^{k})\right) + MU^k(x^k - x^{k-1})\right\|\nonumber\\
        \leq{}& C(\|x^k-x^{k+1}\| + \|x^{k-1}-x^{k}\|), \quad \forall k \geq k_1+1,
    \end{align*}
    where the second inequality comes from $\nabla h(x^{k + 1}) - \nabla h(y^{k}) = \nabla h(x^{k + 1}) - \nabla h(x^k) + \nabla h(x^k) - \nabla h(y^{k})$.
    The rest of the proof is similar to Theorem~\ref{theorem:global-convergence-bpdca-g}
    \qed
\end{document}